\documentclass[11 pt]{amsart}
\usepackage{amscd,amsfonts,amssymb,amsmath}
\usepackage{hyperref}
\usepackage{epsfig}
\usepackage{mathtools}
\usepackage{tabu}
\usepackage{tikz-cd}
\newtheorem{theorem}{Theorem}[section]
\newtheorem{corollary}[theorem]{Corollary}
\newtheorem{lemma}[theorem]{Lemma}
\newtheorem{proposition}[theorem]{Proposition}
\theoremstyle{definition}

\numberwithin{equation}{subsection}

\usepackage[all,cmtip]{xy}

\usepackage{graphicx} 
\newcommand{\IA}{\operatorname{IA}}

\newcommand{\Aut}{\operatorname{Aut}}

\newcommand{\Ker}{\operatorname{Ker}}

\newcommand{\Inn}{\operatorname{Inn}}
\newcommand{\Out}{\operatorname{Out}}

\newcommand{\C}{\operatorname{C}}

\newcommand{\Z}{\operatorname{Z}}

\setlength\oddsidemargin{.02mm}
\setlength\evensidemargin{.02mm}
\setlength\textheight{19.8cm}
\setlength\textwidth{16.5cm}
\setlength\parindent{0pt}
\begin{document}

\title{Conjugacy classes and automorphisms of twin groups}

\author{Tushar Kanta Naik}
\address{Department of Mathematical Sciences, Indian Institute of Science Education and Research (IISER) Mohali, Sector 81,  S. A. S. Nagar, P. O. Manauli, Punjab 140306, India.}
\email{mathematics67@gmail.com, tushar@iisermohali.ac.in}
\author{Neha Nanda}
\email{nehananda94@gmail.com, nehananda@iisermohali.ac.in}
\author{Mahender Singh}
\email{mahender@iisermohali.ac.in}

\subjclass[2010]{Primary 20E45, 20E36; Secondary 57M25, 57M27}
\keywords{Conjugacy problem, Fibonacci sequence, pure twin group, twin group, doodle, right angled Coxeter group}

\begin{abstract}
The twin group $T_n$ is a right angled Coxeter group generated by $n-1$ involutions and the pure twin group $PT_n$ is the kernel of the natural surjection from $T_n$ onto the symmetric group on $n$ symbols. In this paper, we investigate some structural aspects of these groups. We derive a formula for the number of conjugacy classes of involutions in $T_n$, which quite interestingly, is related to the well-known Fibonacci sequence.  We also derive a recursive formula for the number of $z$-classes of involutions in $T_n$. We give a new proof of the structure of $\Aut(T_n)$ for $n \ge 3$, and show that $T_n$ is isomorphic to a subgroup of $\Aut(PT_n)$ for $n \geq 4$. Finally, we construct a representation of $T_n$ to $\Aut(F_n)$ for $n \ge 2$.
\end{abstract}
\maketitle

\section{Introduction}\label{introduction}
The twin group $T_n$, $n \ge 2$, is generated by $n-1$ involutions such that two generators commute if and only if they are not adjacent, and the pure twin group $PT_n$ is the kernel of the natural surjection from $T_n$ onto the symmetric group $S_n$ on $n$ symbols.  Twin groups form a special class of right angled Coxeter groups and appeared in the work of Shabat and Voevodsky \cite{sv}, who referred them as Grothendieck cartographical groups. Later, these groups appeared in the work of Khovanov \cite{Khovanov} under the name twin groups, who gave a geometric interpretation of these groups similar to the one for classical braid groups. Consider configurations of $n$ arcs in the infinite strip $\mathbb{R} \times  [0,1]$ connecting $n$ marked points on each of the parallel lines $\mathbb{R} \times \{1\}$ and $\mathbb{R} \times \{0\}$ such that each arc is monotonic and no three arcs have a point in common. Two such configurations are equivalent if one can be deformed into the other by a homotopy of such configurations in $\mathbb{R} \times [0,1]$ keeping the end points of arcs fixed. An equivalence class under this equivalence is called a \textit{twin}. The product of two twins can be defined by placing one twin on top of the other, similar to the product in the braid group $B_n$. The collection of all twins with $n$ arcs under this operation forms a group isomorphic to $T_n$. Taking the one point compactification of the plane, one can define the closure of a twin on a $2$-sphere analogous to the closure of a braid in $\mathbb{R}^3$. A \textit{doodle} on a closed oriented surface is a finite collection of piecewise linear closed curves without triple intersections. It is not difficult to show that a closure of a twin on a $2$-sphere is a doodle. Doodles on a 2-sphere were first introduced by Fenn and Taylor \cite{Fenn-Taylor}, and the notion was extended to immersed circles in a 2-sphere by Khovanov \cite{Khovanov}. He proved an analogue of the classical Alexander Theorem for doodles, that is, every oriented doodle on a $2$-sphere is closure of a twin. Recently, Gotin \cite{Gotin} proved an analogue of the Markov Theorem for doodles and twins.  Bartholomew-Fenn-Kamada-Kamada \cite{fenn} extended the study of doodles  to immersed circles in a closed oriented surface of any genus, which can be thought of as virtual links analogue for doodles. Recently, in \cite{bfkk}, they constructed an invariant of virtual doodles by coloring their diagrams using some special type of algebra.
\par

Our aim in this paper is to investigate twin and pure twin groups from an algebraic point of view, a direction which has recently attracted a lot of attention. In a recent paper \cite{bardakov}, Bardakov-Singh-Vesnin proved that $PT_n$ is free for $n = 3,4$ and not free for $n \geq 6$. It was conjectured that  $PT_5$ is also a free group of rank $31$, and the same has been established recently by Gonz\'alez-Le\'on-Medina-Roque \cite{Jesus}. A lower bound for the number of generators of $PT_n$ is given in \cite{Anyon} and an upper bound is given in \cite{bardakov}. It is worth noting that authors in \cite{Anyon} refer twin and pure twin groups as  traid and pure traid groups, respectively. Genevois informed us that pure twin groups belong to the class of so called \textit{diagram groups} \cite{Genevois, Guba}. Description of $PT_6$ has been obtained recently by Mostovoy and Roque-M\'arquez \cite{Mostovoy}. It has been proven that $PT_6$ is a free product of the free group $F_{71}$ and 20 copies of the free abelian group $\mathbb{Z} \oplus \mathbb{Z}$. A complete presentation of $PT_n$ for $n \ge 7$ is still not known and seems challenging to describe.
\par

We explore conjugacy classes of involutions, centralisers, automorphisms and representations of twin groups. Kaul-White \cite{Kaul} determined centralisers of some special type of involutions by studying the maximal complete subgraphs of graphs associated to right angled Coxeter groups. We give a precise formula for the number of conjugacy classes of involutions in $T_n$. M{\"u}hlherr \cite{Mullher} and Tits \cite{Tits} studied automorphisms of Coxeter groups of graph-universal type by studying associated graphs of Coxeter systems. Twin groups are special type of graph-universal Coxeter groups, and the structure of their automorphism groups was first obtained by James \cite{James}.
\par

The paper is organised as follows. In Section \ref{basic}, we recall definition of twin and pure twin groups and set basic ideas from combinatorial group theory needed in the rest of the paper. In Section \ref{conjugacy-problem}, we investigate the conjugacy problem in twin groups. In section \ref{Involution},  we derive a formula for the number of conjugacy classes of involutions in $T_n$, which quite interestingly, is related to the well-known Fibonacci sequence (Theorem \ref{theoremrho}). In Section \ref{z-classes}, we investigate $z$-classes (conjugacy classes of centralisers of elements) in twin groups and derive a recursive formula for the number of $z$-classes of involutions (Theorem \ref{thm-zclass}).  In Section \ref{Automorphism}, we determine $\Aut(T_n)$ for all $n \ge3$ (Theorem \ref{thm4.1}). Although this result is known from \cite{James},  our approach is  elementary and we also give some applications. More precisely, we deduce that $PT_n$ is not characteristic in $T_n$ for $n \ge 4$ and that $T_n$ is isomorphic to a subgroup of $\Aut(PT_n)$ for $n \geq4$, which answers a question from \cite{bardakov}. Further, we also prove that the group of $\IA$ and normal automorphisms of $T_n$ is precisely the group of inner automorphisms of $T_n$. This is an analogue of a similar result for braid groups due to Neshchadim \cite{Neshchadim}.  Finally, in Section \ref{representations}, we construct a representation of $T_n$ to $\Aut(F_n)$ (Theorem \ref{faithful-representation}).
\par

We conclude the introduction by setting some notations. For elements $g, h$ of a group $G$, we use the notation  $[ g, h ]:= g^{-1}h^{-1}gh$, $g^G := $ the conjugacy class of $g$ in $G$, $\C_G(g):=$ the centraliser of $g$ in $G$ and $\widehat{g} := $ the inner automorphism of $G$ induced by $g$, that is, $\widehat{g}(x) = g^{-1}xg$ for all $x \in G$.
\bigskip

\section{Preliminaries}\label{basic}
For an integer $n \ge 2$, the \textit{twin group} $T_n$ is defined as the group generated by the set $$S = \{s_1, s_2, \dots, s_{n-1}\}$$ and satisfying the following defining relations
$$s_i^{2} = 1~\text{for all}~i,~ \textrm{and } s_is_j = s_js_i \text{ whenever } |i-j| \geq 2.$$
It follows that $T_2 \cong \mathbb{Z}_2$ and $T_3 \cong \mathbb{Z}_2 *\mathbb{Z}_2$, the infinite dihedral group. Some authors also set $T_1$ as the trivial group. Let $S_n$ be the symmetric group on $n$ symbols. Then there is a natural homomorphism $$\pi: T_n\rightarrow S_n,$$
which maps each generator $s_i$ to the transposition $(i,\;i+1)$. The kernel of this homomorphism is called the {\it pure twin group} and is denoted by $PT_n$. Note that $PT_2=1$ and $PT_3 =\big\langle (s_1s_2)^3 \big\rangle \cong \mathbb{Z}$. In \cite[Theorem 2 and Theorem 3]{bardakov}, it has been shown that $PT_4\cong F_7$ and $PT_n$ is not free for $n\geq 6$. In the same paper, it is conjectured that $PT_5\cong F_{31}$, which has been recently established in \cite{Jesus}. Further, in a recent paper \cite{Mostovoy}, it has been shown that $PT_6 \cong F_{71} * \big(*_{20} (\mathbb{Z} \oplus \mathbb{Z})\big)$.\\

It is evident from the presentation of $T_n$ that an element of $T_n$ can have more than one expression. For example, the words $s_1, s_3s_1s_3$ and $s_1s_2s_3s_5s_3s_2s_5$ represent the same element in $T_n$. In the rest of this section, we recall some ideas from combinatorial group theory that would ease our computations. Most of this section is motivated from \cite[Chapter 1]{lyndon-schupp}.

\subsection{Elementary transformations} We define three elementary transformations of a word $w\in T_n$ as follows:
\begin{itemize}
\item[(i)] \textbf{Deletion.} Replace the word $w$ by deleting a subword of the form $s_is_i$ in $w$.
\item[(ii)]\textbf{Insertion.} Replace the word $w$ by inserting a word of the form $s_is_i$ in $w$.
\item[(iii)]\textbf{Flip.} Replace a subword of $w$ of the form $s_is_j$ by $s_js_i$ whenever $\mid i-j\mid \geq 2$.
\end{itemize}

\subsection{Word equivalence and length}
We say that two words $w_1$ and $w_2$ are {\it equivalent}, written as $w_1 \sim w_2$, if there is a finite sequence of elementary transformations turning $w_1$ into $w_2$. It is easy to check that $\sim$ is an equivalence relation on $T_n$. We note that two words are equivalent if and only if both of them represent the same element of $T_n$.
\par

For a given word $w=s_{i_1} s_{i_2}\dots s_{i_k}$, let $\ell(w)=k$ be the \textit{length} of $w$.  For $1\leq i\leq n-1$, we define $\eta_i(w) :=$ number of $s_i$'s present in the expression $w$. Note that 
 $$\ell(w) =\sum_{i=1}^{n-1}\eta_i(w).$$
 
If $w_1 \sim w_2$, then $\eta_i(w_1)\equiv \eta_i(w_2)\pmod 2$ for each $1\leq i\leq n-1$, and subsequently $\ell(w_1) \equiv \ell(w_2)\pmod 2$. 
 
\subsection{Reduced words}
We say that a word $w \in T_n$ is {\it reduced} if $\ell(w)\leq \ell(w')$ for all $w'\sim w$. The existence of a reduced word in an equivalence class of a word follows from the well-ordering principle. It is possible to have more than one reduced word representing the same element. Moreover, two reduced words represent the same element if and only if one can be obtained from the other by finitely many flip transformations, for example, $s_1s_4$ and $s_4s_1$. Obviously, any two reduced words in the same equivalence class have the same length. This allows us to define  the \textit{length} of an element $w \in T_n$ as the length of a reduced word representing $w$. \\

For each $1\leq i\leq n-1$, we define the following subset of $S$;  
$$s_i^*=\big\{s_j\mid [s_i,\;s_j]\neq 1\big\}.$$ More precisely, $s_1^*=\{s_2\}$, $s_2^*=\{s_1, s_3\}$, $s_3^*=\{s_2, s_4\}, \dots ,s_{n-2}^*=\{s_{n-3}, s_{n-1}\}$ and $s_{n-1}^*=\{s_{n-2}\}.$ The following are easy observations:
\begin{itemize}
\item[(i)] $s_i\in s_j^*$ if and only if $s_j\in s_i^*$.
\item[(ii)] $[s_i,\;s_j]=1$ if and only if $s_j\notin s_i^*$.
\end{itemize}

Below is a characterisation of a reduced word in $T_n$.

\begin{lemma}\label{lem1}
A word $w$ is reduced if and only if $w$ satisfies the property that whenever two $s_{i}$'s appear in $w$ for some $1\leq i\leq n-1$, there always exists an $s_j\in s_i^*$ in between them. 
\end{lemma}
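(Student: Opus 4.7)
The plan is to prove the two implications separately. The forward direction is a short contradiction argument; the backward direction carries the essential content of the lemma and will be the main obstacle.

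\textbf{Forward direction.} Suppose $w$ is reduced, and assume for contradiction that for some generator $s_i$ there are two occurrences of $s_i$ in $w$ with no member of $s_i^*$ strictly between them. Among all such bad pairs I would pick one at minimal distance, so that the intermediate subword $v$ contains no $s_i$ and consists entirely of letters commuting with $s_i$. Writing $w = u \cdot s_i \cdot v \cdot s_i \cdot t$, a finite sequence of flip transformations carries the first $s_i$ rightward across $v$ to produce $w \sim u \cdot v \cdot s_i \cdot s_i \cdot t$, and a single deletion then yields a word of length $\ell(w) - 2$, contradicting reducedness.

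\textbf{Backward direction (contrapositive).} Suppose $w = s_{i_1} \cdots s_{i_k}$ is not reduced; I would produce two occurrences of the same generator in $w$ with no member of $s_{i_j}^*$ between them. Let $m$ be the smallest index for which $s_{i_1} \cdots s_{i_m}$ fails to be reduced. Applying the Exchange Condition for the Coxeter group $T_n$ to the reduced expression $s_{i_1} \cdots s_{i_{m-1}}$ and the generator $s_{i_m}$ yields an index $j < m$ with $s_{i_1} \cdots s_{i_m} = s_{i_1} \cdots \widehat{s}_{i_j} \cdots s_{i_{m-1}}$ in $T_n$; equivalently, the element $s_{i_{j+1}} \cdots s_{i_{m-1}}$ conjugates $s_{i_m}$ to $s_{i_j}$. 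Abelianising $T_n$ to $(\mathbb{Z}/2)^{n-1}$ forces $s_{i_j} = s_{i_m}$, so the conjugating element centralises $s_{i_m}$. In a right-angled Coxeter group the centraliser of a generator $s$ is generated by $s$ together with the generators that commute with $s$; combined with reducedness of the subword $s_{i_{j+1}} \cdots s_{i_{m-1}}$, this forces every letter $s_{i_{j+1}}, \ldots, s_{i_{m-1}}$ to commute with $s_{i_m}$. Positions $j$ and $m$ then furnish two equal generators in $w$ with no element of $s_{i_j}^*$ strictly between them, as required.

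\textbf{Main obstacle.} The backward direction appeals to two nontrivial structural facts: the Exchange Condition for Coxeter groups, and the description of centralisers of generators in right-angled Coxeter groups. For a self-contained exposition in the spirit of this paper I would prefer to replace these by a direct induction on $\ell(w)$, establishing the auxiliary claim that any non-reduced word admits a sequence of flip transformations that places some $s_i s_i$ in consecutive positions; transporting this adjacency back to $w$ identifies the required pair, since only letters commuting with $s_i$ can ever be flipped past an $s_i$. Carrying out this direct argument cleanly, in particular controlling the interaction of insertions, deletions, and flips in a shortest reducing sequence, is the genuine technical hurdle.
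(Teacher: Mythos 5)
Your proof is correct, and the forward direction is essentially the paper's argument (the paper omits your minimal-distance refinement, which is a harmless fix for intervening $s_i$'s). The backward direction, however, takes a genuinely different route. The paper argues directly that the stated property is invariant under flips and rules out any deletion on a word with the property, concluding reducedness in a couple of lines; this is elementary but, as your ``main obstacle'' paragraph correctly diagnoses, it quietly relies on the fact that insertions never help shorten a word (Tits' solution to the word problem for Coxeter groups), which the paper does not make explicit. You instead prove the contrapositive via the Exchange Condition, abelianisation to identify $s_{i_j}=s_{i_m}$, and the description of $\C(s)$ as the standard parabolic on $\{s\}\cup(S\setminus s^*)$ together with the fact that reduced expressions of elements of a standard parabolic are supported on that parabolic. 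This is rigorous and arguably cleaner logically, but it imports substantial Coxeter machinery into what the paper intends as a foundational combinatorial lemma; note in particular that the centraliser description you invoke is essentially the paper's own later Lemma \ref{centraliser}, whose ``immediate'' proof rests on the normal-form theory that Lemma \ref{lem1} is meant to underwrite, so if you use it here you must be careful to cite an independent source (e.g., the general theory of right-angled Coxeter groups) to avoid circularity. In short: the paper buys self-containment at the cost of a glossed step; you buy rigour at the cost of external dependencies.
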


\begin{proof}
Suppose that $w$ is a reduced word and that there exist two $s_{i}$'s in $w$ such no $s_j\in s_i^*$ appears in between them. Then, by successive application of the flip transformation, we can bring the two $s_i$'s together, and then delete them by the deletion transformation. Thus, the resulting word, which is equivalent to $w$, has length strictly less than $\ell(w)$, contradicting the fact that $w$ is reduced.
\par
Conversely, suppose that the word $w$ satisfies the desired property. We note that a word obtained by flip transformations on $w$ also satisfies the desired property. Since deletion cannot be performed on words with this property, it follows that $w$ must be reduced.
\end{proof}

\subsection{Cyclic permutation}
A \textit{cyclic permutation} of a word  $w=s_{i_1} s_{i_2}\dots s_{i_k}$ (not necessarily reduced) is a word $w'$ (not necessarily distinct from $w$) of the form $s_{i_t}s_{i_{t+1}} s_{i_{t+2}}\dots s_{i_k}s_{i_1} s_{i_2}\cdots s_{i_{t-1}}$ for some $1\leq t \leq k$.  If $t=1$, then $w'=w$. It is easy to see that $w'=(s_{i_1} s_{i_2}\dots s_{i_{t-1}})^{-1}w(s_{i_1} s_{i_2}\dots s_{i_{t-1}})$ in $T_n$, that is, $w$ and $w'$ are conjugates of each other in $T_n$. 

\subsection{Cyclically reduced words}\label{cyclic permutation}
A word $w$ is called \textit{cyclically reduced} if each cyclic permutation of $w$ is reduced. It is immediate that a cyclically reduced word is reduced,  but the converse is not true. For example, $s_1s_2s_1$ is reduced but not cyclically reduced.

\begin{lemma}\label{cyc-red}
If $w$ is a cyclically reduced word and $w'$ is a word obtained from $w$ by finitely many flip transformations, then $w'$ is also cyclically reduced. 
\end{lemma}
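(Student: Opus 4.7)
The plan is to reduce to a single flip by induction on the number of flips, and then to recast cyclic reducedness in a convenient geometric form. By Lemma \ref{lem1}, a word is reduced iff between any two occurrences of a generator $s_a$ there is some $s_b \in s_a^*$. Applying this criterion to all cyclic permutations of $w$ simultaneously gives the equivalent formulation: $w$ is cyclically reduced iff, arranging its letters around a circle of length $k$, every pair of equal generators $s_a$ at positions $p \neq q$ has some letter of $s_a^*$ on \emph{both} of the two circular arcs joining $p$ and $q$.

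Now suppose $w'$ is obtained from $w = x_1 \cdots x_{p-1}\, s_i s_j\, x_{p+2} \cdots x_k$ by a single flip to $w' = x_1 \cdots x_{p-1}\, s_j s_i\, x_{p+2} \cdots x_k$, with $|i-j| \geq 2$. Fix two positions $\alpha \neq \beta$ in $w'$ whose labels agree, equal to some $s_a$, and verify the circular condition for this pair. If $\{\alpha, \beta\} \cap \{p, p+1\} = \emptyset$, then since $p, p+1$ are adjacent on the circle they lie on the same arc, so the multisets of labels on the two arcs are unchanged from $w$ to $w'$ and any $s_a^*$-witness for $w$ transfers directly. The degenerate case $\{\alpha, \beta\} = \{p, p+1\}$ would force $s_j = s_i$, contradicting $|i-j| \geq 2$, so it cannot occur.

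The interesting case is when exactly one of $\alpha, \beta$ lies in $\{p, p+1\}$; by symmetry assume $\alpha = p$, so $s_a = s_j$ and in $w$ the matching pair sat at positions $p+1$ and $\beta$. In passing from $w$ to $w'$, one arc between the equal letters gains the position $p+1$ now carrying the label $s_i$, while the other arc loses the position $p$ that carried $s_i$ in $w$. The key observation is that $s_i$ commutes with $s_j = s_a$, so $s_i \notin s_a^*$, meaning $s_i$ could never have been the non-commuting witness on either arc of $w$; therefore the $s_a^*$-witness guaranteed by the cyclic reducedness of $w$ lives at a position different from $p$ and survives to the corresponding arc of $w'$. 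This bookkeeping, namely matching up the two pairs of arcs and checking that the letter which changes arcs is always the commuting partner, is the only real obstacle; once it is in place, combining the three cases shows that every pair of equal letters in $w'$ satisfies the circular criterion, so $w'$ is cyclically reduced, completing the induction.
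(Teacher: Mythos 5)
Your proof is correct, but it takes a genuinely different route from the paper's. The paper first uses the fact that cyclic permutations of a cyclically reduced word are cyclically reduced to move the flipped pair to the front, writes $w=s_{i_1}s_{i_2}s_{i_3}\dots s_{i_k}$ and $w'=s_{i_2}s_{i_1}s_{i_3}\dots s_{i_k}$, observes that every cyclic permutation of $w'$ except $s_{i_1}s_{i_3}\dots s_{i_k}s_{i_2}$ differs by a single flip from a cyclic permutation of $w$ (hence is reduced), and then disposes of that one exceptional permutation by a short subword argument. You instead convert Lemma \ref{lem1} into a circular criterion --- every pair of equal letters must see an $s_a^*$-witness on \emph{both} arcs --- and check directly that a single flip preserves this, the only nontrivial case being when one endpoint of the pair is one of the two flipped positions, where the letter that migrates between arcs is precisely the commuting partner and so can never be the witness. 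Your approach costs an extra reformulation up front but buys a cleaner invariant: it avoids the paper's reliance on the (implicit) fact that flips preserve reducedness and its somewhat delicate handling of the exceptional cyclic permutation, and the circular criterion itself is a reusable characterisation of cyclic reducedness in the spirit of Lemma \ref{cyclically-reduced}. The paper's argument is shorter on the page; yours makes the bookkeeping fully explicit. Both are valid.
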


\begin{proof}
By induction, it suffices to prove the assertion for only one flip transformation on $w$. We begin by noting that any cyclic permutation of a cyclically reduced word is again a cyclically reduced word. Thus, without loss of generality, we can assume that $w=s_{i_1} s_{i_2}s_{i_3}\dots  s_{i_k}$ and $w'=s_{i_2} s_{i_1}s_{i_3}\dots  s_{i_k}$.  We observe  that except the word $s_{i_1}s_{i_3}\dots  s_{i_k}s_{i_2}$, all other cyclic permutations of $w'$ differ by a flip transformation from some cyclic permutation of $w$, and hence are reduced. Thus, it only remains to show that the word $s_{i_1}s_{i_3}\dots  s_{i_k}s_{i_2}$ is reduced. Since $w'$ is reduced,  so are all its subwords, in particular, $s_{i_1}s_{i_3}\dots  s_{i_k}$ and $s_{i_3}\dots  s_{i_k}$ are reduced.  If $s_{i_1}s_{i_3}\dots  s_{i_k}s_{i_2}$  is not reduced, then the only reduction possible is in its subword $s_{i_3}\dots  s_{i_k}s_{i_2}$, but then the word $s_{i_3}\dots  s_{i_k}s_{i_2}s_{i_1}$ is not reduced, which is a contradiction.\end{proof}

The following result is an analogue of Lemma \ref{lem1} for cyclically reduced words.

\begin{lemma}\label{cyclically-reduced}
A reduced word $w$ is cyclically reduced if and only if we cannot obtain a word of the form $s_iw's_i$ from $w$ by applying finitely many flip transformations on $w$.
\end{lemma}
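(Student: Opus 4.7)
The plan is to prove both implications by contradiction, using the two preceding lemmas in tandem: Lemma \ref{lem1} to detect non-reduced subwords, and Lemma \ref{cyc-red} to transport cyclic reducedness across flip transformations.

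For the forward direction, suppose $w$ is cyclically reduced and yet some sequence of flips converts $w$ into $s_i w' s_i$ for some generator $s_i$ and some word $w'$. Lemma \ref{cyc-red} then forces $s_i w' s_i$ to be cyclically reduced as well, so in particular its cyclic rotation $w' s_i s_i$ must be reduced. But the two adjacent $s_i$'s at the end of that word are separated by nothing, and in particular by no element of $s_i^*$, so Lemma \ref{lem1} is violated and we reach a contradiction.

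For the reverse direction I would instead assume $w$ is reduced but not cyclically reduced, and exhibit the required flip sequence. Write $w = uv$ so that the cyclic permutation $vu$ fails to be reduced. Both $u$ and $v$, being contiguous subwords of the reduced word $w$, are themselves reduced. By Lemma \ref{lem1} applied to $vu$, some pair of occurrences of a generator $s_i$ in $vu$ has no element of $s_i^*$ strictly between them, and since neither $u$ nor $v$ can contain such a pair internally, one of the two $s_i$'s must lie in $v$ (the front half of $vu$) and the other in $u$ (the rear half). Splitting accordingly as $v = v_1 s_i v_2$ and $u = u_1 s_i u_2$, the intermediate block $v_2 u_1$ of $vu$ contains no element of $s_i^*$. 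A second appeal to the reducedness of $u$ and $v$ then shows that $v_2$ and $u_1$ also contain no further copy of $s_i$, since otherwise a pair of $s_i$'s internal to $v$ (or to $u$) would again contradict Lemma \ref{lem1}. Hence every letter of $v_2$ and of $u_1$ commutes with $s_i$, and starting from $w = u_1 s_i u_2 v_1 s_i v_2$ a sequence of flips slides the $s_i$ in $u$ leftward past $u_1$ and the $s_i$ in $v$ rightward past $v_2$, yielding $s_i (u_1 u_2 v_1 v_2) s_i$, which is of the required form.

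The main obstacle is the second reduction step in the reverse direction: arranging that the intervening letters between the chosen pair miss not only $s_i^*$ but also $s_i$ itself, so that the two flanking $s_i$'s can be commuted to the ends of the word simultaneously. Once this is secured via the reducedness of $u$ and $v$, the flips themselves are routine bookkeeping.
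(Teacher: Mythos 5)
Your proposal is correct and takes essentially the same route as the paper: the forward direction is identical (Lemma \ref{cyc-red} plus a cyclic rotation producing two adjacent $s_i$'s), and the converse uses the same decomposition of $w$ into two reduced halves whose offending $s_i$'s are flipped out to the ends. The only difference is that you explicitly justify---by showing the intervening block contains neither $s_i$ nor any letter of $s_i^*$---the step the paper merely asserts with ``this is possible only if\dots'', which is a welcome bit of added rigor rather than a change of method.
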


\begin{proof}
Suppose that $w$ is a cyclically reduced word and $s_iw's_i$ is obtained from $w$ by applying finitely many flip transformations. Then, by Lemma \ref{cyc-red}, $s_iw's_i$ is also cyclically reduced. Since a cyclic permutation of a cyclically reduced word is cyclically reduced, it follows that $w's_is_i$ is cyclically reduced, which is a contradiction.

Conversely, suppose that a reduced word $w$ is not cyclically reduced. That is, some cyclic permutation of $w$ is not reduced. We may assume that $w$ is of the form $w_1w_2$ so that its cyclic permutation $w_2w_1$ is not reduced. Since $w$ is reduced, both of its subwords $w_1$ and $w_2$ are also reduced. On the other hand, the word $w_2w_1$ is not reduced. This is possible only if, by applying finitely many flip transformations, $w_1$ and $w_2$ can be written in the form $s_iw_1'$ and $w_2's_i$, respectively, for some $1\leq i\leq n-1$. Thus, by applying finitely many flip transformations on the word $w=w_1w_2$, we obtain the word $s_iw_1'w_2's_i$, which is a contradiction.
\end{proof}

\begin{corollary}\label{cor1}
Each word in $T_n$ is conjugate to some cyclically reduced word.
\end{corollary}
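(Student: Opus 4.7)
The plan is to argue by strong induction on the length $\ell(w)$ of a word $w \in T_n$. The base cases $\ell(w) = 0$ and $\ell(w) = 1$ are immediate, since such words are automatically reduced and admit no nontrivial cyclic permutations that could fail to be reduced; hence they are cyclically reduced and conjugate to themselves.

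For the inductive step, I would first pass from the given $w$ to a reduced word representing the same element of $T_n$, which is harmless since equivalent words are equal (and therefore trivially conjugate) in $T_n$. If this reduced word is already cyclically reduced, we are done. Otherwise, Lemma \ref{cyclically-reduced} supplies a finite sequence of flip transformations carrying $w$ to a word of the form $s_i w' s_i$ for some $1 \leq i \leq n-1$. Since flips preserve the group element, we have the equality $w = s_i w' s_i$ in $T_n$, and therefore $s_i w s_i = w'$, exhibiting $w$ as a conjugate of $w'$.

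Now $s_i w' s_i$ is reduced (being flip-equivalent to the reduced word $w$), and so its subword $w'$ is also reduced with $\ell(w') = \ell(w) - 2 < \ell(w)$. By the inductive hypothesis, $w'$ is conjugate in $T_n$ to some cyclically reduced word $\widetilde{w}$; composing conjugations shows that $w$ is conjugate to $\widetilde{w}$ as well.

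The only step that requires a moment of care is verifying that the flip transformations of Lemma \ref{cyclically-reduced} really do produce a strictly shorter conjugate, rather than merely an equivalent word of the same length; but this follows because flips preserve length while the passage from $s_i w' s_i$ to $w'$ strips off two letters. There is no substantial obstacle here — the content of the corollary is already packaged inside Lemma \ref{cyclically-reduced}, and the induction just unwinds this one flip at a time.
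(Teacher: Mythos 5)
Your proof is correct and is exactly the argument the paper intends: reduce, apply Lemma \ref{cyclically-reduced} to peel off a conjugating letter $s_i$ when the word is not cyclically reduced, and induct on the (strictly decreasing) length. The paper leaves this unwritten as an immediate consequence of the lemma, so there is nothing to compare beyond noting that your write-up fills in the same routine induction.
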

\bigskip

\section{Conjugacy problem in twin groups}\label{conjugacy-problem}
In this section, we investigate conjugacy problem in twin groups.  In view of Corollary \ref{cor1}, it is enough to focus on cyclically reduced words to study conjugacy problem in $T_n$. The following result gives a necessary and sufficient condition for the same.

\begin{theorem}\label{conditionforconjugate}
 Suppose $w_1, w_2$ are two cyclically reduced words in $T_n$. Then $w_1$ is conjugate to $w_2$ if and only if they are cyclic permutation of each other modulo finitely many flip transformations.
\end{theorem}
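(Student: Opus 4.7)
The plan is to prove $(\Leftarrow)$ by a direct verification and $(\Rightarrow)$ by strong induction on the minimum length $m$ of a conjugating element $g\in T_n$ satisfying $w_2=g^{-1}w_1g$. The easy direction $(\Leftarrow)$ is immediate from Section \ref{cyclic permutation}: cyclic permutations of a word are conjugates of the original in $T_n$, and flip transformations preserve the underlying group element since they implement the defining commutation relations $s_is_j=s_js_i$ for $|i-j|\ge2$. Hence any interleaving of cyclic permutations and flips applied to $w_1$ yields a word representing a conjugate of $w_1$.

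For $(\Rightarrow)$, the base case $m=0$ is clear: $w_1=w_2$ as elements of $T_n$ and, being reduced words for the same element, differ only by flips (Section \ref{basic}). For the inductive step with $m\ge1$, fix a reduced expression $g=s_{j_1}\cdots s_{j_m}$, write $s:=s_{j_1}$ and $g':=s_{j_2}\cdots s_{j_m}$, and observe that $w_2=(g')^{-1}(sw_1 s)g'$. The crux is to produce a cyclically reduced word $v$ such that $v=sw_1 s$ in $T_n$ and $v$ is obtained from $w_1$ by one cyclic permutation combined with finitely many flips. Given such a $v$, it is conjugate to $w_2$ via $g'$ of length $m-1<m$, the inductive hypothesis applies to the pair $(v,w_2)$, and composing the resulting equivalence with the transformation from $w_1$ to $v$ concludes the argument.

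To construct $v$, first note that the minimality of $|g|$ forces $s$ not to commute with $w_1$ in $T_n$; otherwise $g'$ itself would conjugate $w_1$ to $w_2$. The favorable case is when $w_1$ contains an occurrence of $s$ that can be flipped to one of its ends: then one of the cancellations $s\cdot s=1$ in $sw_1 s$ reduces it to a word that is manifestly a cyclic shift of a flipped form of $w_1$, and this shift is cyclically reduced by Lemma \ref{cyc-red}. Locating such an occurrence of $s$ and carrying out the flips is made precise using Lemma \ref{lem1} together with the fact that flip-equivalent reduced words correspond to the same element.

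The principal obstacle is the remaining case in which no occurrence of $s$ in $w_1$ can be flipped to an end (or $s$ does not appear in $w_1$ at all), so that $sw_1 s$ is reduced of length $|w_1|+2$. In this situation the plan is to exploit the flip-equivalence of all reduced expressions of $g$ and argue that some reduced expression of $g$ must begin with a generator falling into the favorable case. Intuitively, if every possible leading letter of $g$ only lengthens $w_1$ upon conjugation, the subsequent letters of $g$ would be forced to produce the cancellations needed to recover a cyclically reduced $w_2$; those cancelling letters can then be flipped to the leading position of $g$ to supply a new first letter of the favorable type. Making this rearrangement rigorous, and verifying that it indeed returns the problem to the favorable case analyzed in the previous paragraph, is the delicate technical heart of the proof.
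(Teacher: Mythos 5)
Your overall skeleton (induct on the length of a conjugating word, peel off one generator, analyse where it cancels) is the same as the paper's, and your easy direction, base case, and ``favorable'' case are all fine. But there is a genuine gap, and you have located it yourself: the case where the peeled letter $s$ cannot be flipped to either end of $w_1$, so that $sw_1s$ is honestly reduced of length $\ell(w_1)+2$. Your proposed escape --- that some flip-equivalent reduced expression of the minimal conjugator $g$ must begin with a favorable letter --- is not an argument but a hope, and as sketched it does not go through. The letters of $g$ that would eventually cancel the two inserted copies of $s$ are themselves occurrences of $s=s_{j_1}$; if such a letter $s_{j_t}$ could be flipped to the leading position of $g$, then $g$ would admit an expression beginning $ss\cdots$, contradicting that $g$ is reduced. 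If instead you mean letters of $g$ that cancel into $w_1$, there is no reason such a letter is flippable both to the front of $g$ and to an end of $w_1$: the occurrence of that generator inside $w_1$ may only become accessible after the intermediate conjugations have rearranged the word. Establishing your claim would require tracking the full cancellation pattern of $g^{-1}w_1g$, which is exactly the work you have deferred.

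The source of the difficulty is that you peel the conjugator from the wrong end. Writing $w_2=s_{j_m}\cdots s_{j_1}\,w_1\,s_{j_1}\cdots s_{j_m}$, the hypothesis of cyclic reducedness constrains the \emph{outermost} letters $s_{j_m}$ of this expression (if both survived reduction, the reduced form of $w_2$ would be $a\,s_{j_m}\,b\,s_{j_m}\,c$ with $a,c$ commuting with $s_{j_m}$, and the cyclic permutation $b\,s_{j_m}\,c\,a\,s_{j_m}$ would not be reduced). It places no constraint on the innermost letter $s_{j_1}$, which is the one you remove; the intermediate word $sw_1s$ is simply not forced to shorten, because the remaining conjugation by $g'$ is still to come. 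This is precisely why the paper's proof conjugates the cyclically reduced word $w_2$ by $w$ to land on the cyclically reduced word $w_1$ and peels the \emph{outermost} letter $s_{i_k}$: its cancellation is then forced, only three cases arise (cancellation into $w_2 s_{i_1}\cdots s_{i_k}$, into $s_{i_k}\cdots s_{i_1}w_2$, or of the two outer letters with each other), and each case visibly reduces to a conjugation by a word of length $k-1$ after performing a cyclic permutation modulo flips. To repair your proof, reorient the induction in this way; as it stands, the ``delicate technical heart'' is missing, not merely delicate.
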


\begin{proof}The converse is obvious. Let us assume that $w_1, w_2 \in T_n$ are two cyclically reduced conjugate words. Let $w_1=w^{-1}w_2w$, where $w$ is a reduced word. We need to show that $w_1$ and $w_2$ are cyclic permutation of each other modulo finitely many flip transformations. We use induction on $\ell(w)$.

Suppose $\ell(w)=1$, that is, $w=s_i$ for some $i = 1, 2, \dots, n-1$. Then $w_1=s_iw_2s_i$. Since $w_1$ is cyclically reduced, the two $s_{i}$'s should get cancelled. The following are the three possibilities:
\begin{itemize}
\item[(i)] There is cancellation in the subword $w_2s_i$.
\item[(ii)] There is cancellation in the subword $s_iw_2$.
\item[(iii)] Both the rightmost and the leftmost $s_i$ cancel each other after finitely many flip transformations.
\end{itemize}
In Case (iii), $w_1=w_2$ and we are done. In Case (i), by successive application of flip transformations on $w_2$, we obtain $w_2's_i$. This implies that  by successive application of flip transformations on the word $s_iw_2s_i$, we get $s_iw_2's_is_i$. By deletion transformation this gives $w_1=s_iw_2s_i=s_iw_2'$. Note that it is a cyclic permutation of $w_2's_i$, which we obtained by flip transformations on $w_2$. Case (ii) can be treated in the same manner.

Now suppose that $\ell(w)=k>1$, where $w=s_{i_1} s_{i_2}\dots s_{i_k}$. Then we can write $$w_1=(s_{i_1} s_{i_2}\dots s_{i_k})^{-1}w_2 (s_{i_1} s_{i_2}\dots s_{i_k}) =s_{i_k} s_{i_{k-1}}\dots s_{i_2}s_{i_1}w_2s_{i_1} s_{i_2}\dots s_{i_k}.$$ Since $w_1$ is cyclically reduced, $s_{i_k}$ should get cancelled. Following the steps of the case $k=1$, we have the following possibilities:
\begin{itemize}
\item[(i$'$)] There is cancellation of rightmost $s_{i_k}$ in the word $w_2s_{i_1} s_{i_2}\dots s_{i_k}$.
\item[(ii$'$)] There is cancellation of leftmost $s_{i_k}$ in the word $s_{i_k} s_{i_{k-1}}\dots s_{i_2}s_{i_1}w_2$.
\item[(iii$'$)]  Both the rightmost and the leftmost $s_{i_k}$ cancel each other after finitely many flip transformations. 
\end{itemize}
In Case (iii$'$) it is easy to see that $w_1=s_{i_{k-1}} s_{i_{k-2}}\cdots s_{i_2}s_{i_1}w_2s_{i_1} s_{i_2}\cdots s_{i_{k-1}}$ modulo finitely many flip transformations. Thus, we are done by induction hypothesis. For Case (ii$'$), by successive application of flip transformations on $w_2$, $s_{i_k} s_{i_{k-1}}\dots s_{i_2}s_{i_1}$ and $s_{i_1} s_{i_2}\dots s_{i_k}$, we obtain subwords $s_{i_k}w_2'$, $s_{i_{k-1}} s_{i_{k-2}}\dots s_{i_2}s_{i_1}s_{i_k}$ and $s_{i_k}s_{i_1} s_{i_2}\dots s_{i_{k-1}}$, respectively. Thus, after finitely many flip transformations, we get  $w_1=s_{i_{k-1}} s_{i_{k-2}}\dots s_{i_2}s_{i_1}s_{i_k}s_{i_k}w_2' s_{i_k} s_{i_1} s_{i_2}\dots s_{i_{k-1}}$. Consequently,  by deletion transformation, we have $$w_1=s_{i_{k-1}} s_{i_{k-2}}\dots s_{i_2}s_{i_1} w_2' s_{i_k}s_{i_1} s_{i_2}\dots s_{i_{k-1}}=(s_{i_1} s_{i_2}\dots s_{i_{k-1}})^{-1} w_2' s_{i_k} (s_{i_1} s_{i_2}\dots s_{i_{k-1}}).$$ Thus, by induction hypothesis, $w_2's_{i_k}$ (and hence $s_{i_k}w_2'$) is a cyclic permutation of $w_1$ modulo finitely many flip transformations. Since $s_{i_k}w_2'$ is obtained from $w_2$ by finitely many flip transformations, the proof of the assertion follows. Case (i$'$) can be dealt with along similar lines.
\end{proof}

\begin{corollary}
A word $w \in T_n$ is cyclically reduced if and only if $\ell(w)$ is minimal in its conjugacy class.
\end{corollary}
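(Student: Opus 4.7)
The plan is to prove both implications from the structural results already in place, namely Lemma~\ref{cyclically-reduced}, Theorem~\ref{conditionforconjugate}, and Corollary~\ref{cor1}. I will treat the two directions separately.

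For the direction ``minimal $\ell(w)$ implies cyclically reduced'', I argue by contrapositive. Suppose $w$ is not cyclically reduced. There are two cases. If $w$ is not reduced, then a deletion transformation produces a word $w^{*} \sim w$ with $\ell(w^{*}) < \ell(w)$ representing the very same element, so $\ell(w)$ is already non-minimal inside the conjugacy class. If $w$ is reduced but not cyclically reduced, Lemma~\ref{cyclically-reduced} gives a sequence of flip transformations rewriting $w$ as $s_i w' s_i$ for some $i$ and some word $w'$. Flips preserve length and the represented element, so in $T_n$ we have $w = s_i w' s_i$, and conjugating by $s_i$ (using $s_i^2 = 1$) shows that $w'$ belongs to the conjugacy class of $w$ while $\ell(w') = \ell(w) - 2 < \ell(w)$. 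In either case, $\ell(w)$ is not minimal.

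For the converse direction, assume $w$ is cyclically reduced and let $u$ be any word representing an element conjugate to $w$ in $T_n$. By Corollary~\ref{cor1} (whose proof is an iterated application of Lemma~\ref{cyclically-reduced}: as long as the current word is reduced but not cyclically reduced, one flips it into the form $s_i v s_i$ and conjugates by $s_i$, strictly reducing length; and if it is not yet reduced one first performs deletions), we can pass from $u$ to a cyclically reduced word $u^{\circ}$ conjugate to $u$ with $\ell(u^{\circ}) \le \ell(u)$. Now $w$ and $u^{\circ}$ are both cyclically reduced and conjugate, so Theorem~\ref{conditionforconjugate} asserts that they are cyclic permutations of each other modulo finitely many flip transformations. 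Since cyclic permutations and flips both preserve length, $\ell(w) = \ell(u^{\circ}) \le \ell(u)$, which proves minimality.

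I do not anticipate a genuine obstacle here, as all the machinery is already in place; the only point deserving care is making explicit that the passage from an arbitrary representative $u$ to a cyclically reduced conjugate $u^{\circ}$ can be arranged to be length-non-increasing, which is precisely what the iterative use of Lemma~\ref{cyclically-reduced} inside the proof of Corollary~\ref{cor1} guarantees.
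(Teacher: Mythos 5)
Your proof is correct and follows exactly the route the paper intends: the forward direction is the contrapositive via Lemma \ref{cyclically-reduced} (flip into the form $s_iw's_i$ and conjugate by $s_i$ to drop the length by $2$), and the converse combines the length-non-increasing reduction of Corollary \ref{cor1} with Theorem \ref{conditionforconjugate}, under which conjugate cyclically reduced words have equal length. The paper leaves this corollary unproved, and your write-up, including the explicit remark that the passage to a cyclically reduced conjugate never increases length, supplies precisely the intended argument.
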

\bigskip

\section{Conjugacy classes of involutions in twin groups}\label{Involution}
In this section, we study conjugacy classes of involutions in $T_n$. Since conjugate elements have the same order, in view of Corollary \ref{cor1},   it suffices to study cyclically reduced involutions in $T_n$. Specifically, we derive a formula for the number of conjugacy classes of involutions in $T_n$. Quite interestingly, it is closely related to the well-known Fibonacci sequence.

\begin{proposition}\label{prop-inv}
Let $w=s_{i_1} s_{i_2}\dots s_{i_k}$ be a cyclically reduced word in $T_n$. Then $w$ is an involution if and only if $[s_{i_j},s_{i_l}]=1$ for all $1\leq j , l\leq k$.
\end{proposition}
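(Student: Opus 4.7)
The plan is to handle the two directions separately, leveraging the fact recalled earlier that two reduced expressions for the same element of $T_n$ differ only by a finite sequence of flip transformations.

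For the sufficient direction, assume that all the letters $s_{i_1}, \dots, s_{i_k}$ pairwise commute. Since $w$ is reduced, Lemma \ref{lem1} forces each distinct generator to occur at most once in $w$: if some $s_i$ occurred twice, Lemma \ref{lem1} would require an element of $s_i^*$ to appear between those two occurrences, yet no letter of $s_i^*$ can appear anywhere in $w$ by the commutation hypothesis. With every letter of $w$ distinct and pairwise commuting, flip transformations on $w^2$ bring each matching pair into adjacency and deletions then collapse the result to $w^2 = 1$.

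For the necessary direction, rewrite $w^2 = 1$ as $w = w^{-1}$ in $T_n$, where $w^{-1} = s_{i_k} s_{i_{k-1}} \cdots s_{i_1}$. The reversed word is again reduced, because the criterion in Lemma \ref{lem1} is symmetric under reversing the order of letters. Hence $w$ and $w^{-1}$ are two reduced expressions for the same element of $T_n$, so they are related by a finite sequence of flips. The decisive observation is that a flip swaps only two adjacent letters that commute; hence if we track two particular letter occurrences in $w$, their relative left-to-right order can only be reversed by a flip that first makes them adjacent and then swaps them, and such a swap is forbidden when those two letters do not commute. Consequently, the relative order of any two non-commuting occurrences is invariant under flip sequences.

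But the passage from $w$ to $w^{-1}$ reverses the relative order of every single pair of occurrences. Combined with the previous paragraph, this forces every pair of letters appearing in $w$ to commute, which is the desired conclusion. The one step that needs genuine care is the invariance-of-relative-order principle for non-commuting occurrences, which I would record as a small bookkeeping lemma about flip transformations (routine case analysis on whether the flipped pair contains neither, exactly one, or both of the tracked occurrences); once that is in hand, the rest of the proof is immediate.
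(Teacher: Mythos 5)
Your ``if'' direction is fine (the paper dismisses it as immediate), and your appeal to the fact that two reduced words representing the same element differ by flips is legitimate, since the paper records this in its preliminaries. The gap is in the ``only if'' direction, at the step where you assert that the passage from $w$ to $w^{-1}$ reverses the relative order of every pair of occurrences and then combine this with the flip-invariance of the order of non-commuting occurrences. These two claims concern two \emph{different} correspondences between letter occurrences: the positional reversal $j \mapsto k+1-j$, and the bijection induced by the actual flip sequence carrying the word $s_{i_1}\cdots s_{i_k}$ to the word $s_{i_k}\cdots s_{i_1}$. They coincide only when each generator occurs at most once in $w$, which is essentially what you are trying to prove. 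If some generator occurs several times, the flip sequence is free to match the first occurrence of $s_i$ in $w$ with the first occurrence of $s_i$ in $w^{-1}$ (positionally the last one in $w$), and no contradiction arises. Tracked correctly, your invariance principle yields only that, for each non-commuting pair $\{a,b\}$, the subword of $w$ formed by the occurrences of $a$ and $b$ is a palindrome --- not that the pair is absent.

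That this is a genuine gap and not a presentational one is shown by $w = s_1 s_2 s_1$: it is reduced, it is an involution, it equals its own reversal letter for letter (so the flip sequence is empty and reverses nothing), and $[s_1,s_2]\neq 1$. Your argument nowhere uses the hypothesis that $w$ is \emph{cyclically} reduced, and the statement is false without that hypothesis, so the argument cannot be complete. The paper's proof invokes cyclic reducedness at the outset: after a cyclic permutation it writes $w = s_i w_1 s_{i+1} w_2$ with $\eta_i(w_1)=\eta_{i+1}(w_1)=0$ and argues directly on the word $w^2$ that the leftmost $s_i$ can never be cancelled, because the intervening $s_{i+1}$ would have to be cancelled first and is itself blocked by an $s_i$. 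To salvage your route you would have to feed cyclic reducedness into the palindrome conclusion to exclude configurations such as $a\cdots b\cdots a$; at that point you are essentially reconstructing the paper's direct argument.
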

\begin{proof}
Let us suppose that $w$ is an involution and that it does not satisfy the desired condition. Since $w$ is cyclically reduced, without loss of generality, we may assume that $w$ can be written as $w=s_iw_1s_{i+1}w_2$ such that $\eta_i(w_1)=\eta_{i+1}(w_1)=0$ for some $1\leq i\leq n-2$. Since $w$ is an involution,  we have $$w^2=s_iw_1s_{i+1}w_2s_iw_1s_{i+1}w_2=1.$$ Thus, every letter (in particular, $s_i$ and $s_{i+1}$) on the left hand side of the preceding expression should get cancelled by a finite sequence of flip and deletion transformations. But, as $w=s_iw_1s_{i+1}w_2$ is reduced, we cannot use deletion transformation on $w$. Hence, cancellation of leftmost $s_i$ in the expression of $w^2$ is possible only with the other $s_i$ appearing in the expression of $w^2$ by repeated application of the flip transformation. This happens only if the leftmost $s_{i+1}$ occuring between the two $s_i$'s in the expression of $w^2$ cancel. But that is not possible since there is a $s_i$ between the two $s_{i+1}$'s. Thus,  $s_iw_1s_{i+1}w_2s_iw_1s_{i+1}w_2\neq 1$, a contradiction. The proof of the converse is immediate.
\end{proof}

As a consequence of Corollary \ref{cor1} and Proposition \ref{prop-inv}, we obtain the following result.

\begin{corollary}\label{cor3.2}
Let $w$ be an element of $T_n$. Then $w$ is an involution if and only if $w$ is conjugate to a cyclically reduced word of the form $s_{i_1} s_{i_2}\dots s_{i_k}$ such that $i_{t+1} - i_t\geq 2$. Furthermore, any two distinct cyclically reduced words of this form are not conjugates.
\end{corollary}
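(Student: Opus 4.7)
The plan is to combine Corollary \ref{cor1} with Proposition \ref{prop-inv} to normalize every involution to a canonical cyclically reduced form, and then to invoke Theorem \ref{conditionforconjugate} to get the uniqueness (second) assertion.

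For the forward direction, I would begin by using Corollary \ref{cor1} to replace $w$ by a conjugate cyclically reduced word $w' = s_{i_1} s_{i_2} \cdots s_{i_k}$. Since $w'$ is also an involution, Proposition \ref{prop-inv} forces $[s_{i_j}, s_{i_l}] = 1$ for all $j, l$, i.e.\ $|i_j - i_l| \geq 2$ whenever $j \ne l$. In particular, all the letters of $w'$ pairwise commute, so repeated flip transformations can rearrange them in any order. If some index appeared twice, say $i_j = i_l$, I could flip the two occurrences next to each other and apply a deletion, contradicting the fact that $w'$ is reduced (Lemma \ref{lem1}). Hence the indices $i_1, \dots, i_k$ are pairwise distinct, and by flips I can arrange them in strictly increasing order; the commutation condition $|i_j - i_l| \geq 2$ then becomes $i_{t+1} - i_t \geq 2$. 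The converse is immediate: if $w'$ has this form, all its letters commute and squaring $w'$, then reordering via flips, collapses to $s_{i_1}^2 s_{i_2}^2 \cdots s_{i_k}^2 = 1$, so $w'$ (and hence any conjugate) is an involution.

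For the furthermore clause, suppose $w' = s_{i_1} s_{i_2} \cdots s_{i_k}$ and $w'' = s_{j_1} s_{j_2} \cdots s_{j_l}$ are two cyclically reduced words of the prescribed form that happen to be conjugate in $T_n$. By Theorem \ref{conditionforconjugate}, $w''$ must be a cyclic permutation of $w'$ modulo finitely many flip transformations. Cyclic permutations preserve the multiset of letters appearing in the word, and so do flip transformations; therefore the multisets $\{s_{i_1}, \dots, s_{i_k}\}$ and $\{s_{j_1}, \dots, s_{j_l}\}$ coincide. Because both words list their indices in strictly increasing order, this forces $k = l$ and $i_t = j_t$ for all $t$, so $w' = w''$ as words.

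The mildly delicate step is confirming that repetition of an index is impossible in the cyclically reduced form; but once one observes that pairwise commutation makes flips act as unrestricted permutations of the letters, the reduction argument via Lemma \ref{lem1} is essentially automatic. The rest is bookkeeping on top of Theorem \ref{conditionforconjugate} and Proposition \ref{prop-inv}.
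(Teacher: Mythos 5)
Your proposal is correct and follows the route the paper intends: the paper states this corollary as an immediate consequence of Corollary \ref{cor1} and Proposition \ref{prop-inv}, and your argument simply fills in the details (normalizing via flips to strictly increasing distinct indices, ruling out repeated indices by reducedness, and invoking Theorem \ref{conditionforconjugate} for the non-conjugacy of distinct normal forms). No gaps; the write-up is a faithful elaboration of the paper's one-line justification.
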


Note that a cyclically reduced word $w$ is an involution if and only if it can be written in the form $s_{i_1} s_{i_2}\dots s_{i_k}$ such that $i_{t+1} - i_t\geq 2$. Set
\begin{equation}\label{A_n}
\mathcal{A}_n = \big\{s_{i_1} s_{i_2}\dots s_{i_k} \mid 1\leq i_t\leq n-1, ~ i_{t+1} - i_t\geq 2 \big\}.
\end{equation}
The following result, whose  proof is immediate from the presentation of $T_n$, gives ranks of the centralisers of cyclically reduced involutions.

\begin{lemma}\label{centraliser}
Let $w =s_{i_1} s_{i_2}\dots s_{i_k}$ be an involution in $T_n$, where $i_{t+1} - i_t\geq 2$ for all $1 \le t \le k-1$. Then $\C_{T_n}(w)  = \big\langle S\setminus \bigcup\limits_{t=1}^{k}  s_{i_t}^*\big\rangle$, and consequently $rank \big(\C_{T_n}(w)\big) = (n-1)-  |\bigcup\limits_{t=1}^{k}  s_{i_t}^*|.$
\end{lemma}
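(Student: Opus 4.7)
The statement has two parts: (i) the equality $\C_{T_n}(w) = \langle S \setminus \bigcup_{t=1}^{k} s_{i_t}^*\rangle$, and (ii) the rank formula. My plan is to establish (i) by proving two containments, and then read off (ii) from the fact that standard parabolic subgroups of Coxeter groups have rank equal to the size of their defining subset.

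For the inclusion $\supseteq$, I would argue directly from the presentation. If $s_j \in S \setminus \bigcup_{t=1}^{k} s_{i_t}^*$, then $s_j \notin s_{i_t}^*$ for every $t$, so by the observation $[s_i, s_j]=1 \Leftrightarrow s_j \notin s_i^*$ recalled in Section \ref{basic}, $s_j$ commutes with each $s_{i_t}$, hence with $w$. So each generator of $\langle S \setminus \bigcup_{t=1}^{k} s_{i_t}^*\rangle$ already lies in $\C_{T_n}(w)$, and the containment of subgroups follows.

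The reverse inclusion $\subseteq$ is the main obstacle, and I would attack it by induction on $\ell(g)$ for $g \in \C_{T_n}(w)$. Fix a reduced expression $g = s_j g'$. In the favourable case $s_j \in S \setminus \bigcup_{t=1}^{k} s_{i_t}^*$, the commutation $s_j w = w s_j$ together with $gw=wg$ yields $g' w = w g'$, so the inductive hypothesis applies to $g'$ and puts $g$ in the desired subgroup. The delicate case is $s_j \in \bigcup_{t=1}^{k} s_{i_t}^*$, say $s_j \in s_{i_{t_0}}^*$; here $g'$ conjugates $w$ not to itself but to $s_j w s_j$. A direct computation using Proposition \ref{prop-inv} and Lemma \ref{lem1} shows that $s_j w s_j$ equals the word obtained by commuting $s_j$ past all $s_{i_l}$ with $l \neq t_0$ and inserting the two $s_j$'s on either side of $s_{i_{t_0}}$; this word is reduced of length $k+2$, hence strictly longer than $w$ and distinct from it. To rule this case out, I would either (a) pass to the unique decomposition $g = uv$ with $u \in W_J := \langle S \setminus \bigcup_{t=1}^{k} s_{i_t}^*\rangle$ and $v$ a minimal-length left coset representative for $W_J$ in $T_n$, noting that $u$ already centralises $w$ so $v$ does too, while any reduced expression of such a $v$ must begin with a letter outside $J$, contradicting the length computation above unless $v=1$; or (b) invoke the classical result that in a right-angled Coxeter group, the centraliser of the longest element of a spherical standard parabolic $W_T$ equals $W_{T \cup \mathrm{Link}(T)}$, applied here to $T = \{s_{i_1}, \ldots, s_{i_k}\}$, which is a clique in the commutation graph with $T \cup \mathrm{Link}(T) = S \setminus \bigcup_{t=1}^{k} s_{i_t}^*$.

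Once the centraliser equality is in hand, the rank formula is immediate: $\langle S \setminus \bigcup_{t=1}^{k} s_{i_t}^*\rangle$ is a standard parabolic subgroup of the Coxeter group $T_n$, hence itself a right-angled Coxeter group on the specified generating set, whose rank equals $|S \setminus \bigcup_{t=1}^{k} s_{i_t}^*| = (n-1) - \big|\bigcup_{t=1}^{k} s_{i_t}^*\big|$, since the defining generators of a Coxeter system cannot be redundant by the exchange condition.
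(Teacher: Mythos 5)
The paper offers no argument for this lemma at all---it is stated with the remark that the proof is ``immediate from the presentation''---so there is nothing to compare against step by step; what matters is whether your argument is sound. Your containment $\supseteq$ is correct, and so is the rank computation: $\big\langle S\setminus\bigcup_t s_{i_t}^*\big\rangle$ is a standard parabolic, hence a right angled Coxeter group on the indicated generators, and its rank equals the number of those generators (most simply because its abelianisation is an elementary abelian $2$-group of that rank). For the reverse containment, your route (b)---quoting the classical description, in a right angled Coxeter group, of the centraliser of the product of a set of pairwise-commuting generators as the parabolic on the star of that clique---does finish the proof, though it imports a result well beyond the elementary word calculus of Section \ref{basic} and should come with a precise citation.

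Route (a), however, has a genuine gap at its final step. Writing $g=uv$ with $u\in W_J$, $J=S\setminus\bigcup_t s_{i_t}^*$, and $v$ of minimal length in $W_Jg$, you correctly obtain $v\in\C_{T_n}(w)$ and that every reduced expression of $v$ begins with some $s_j\notin J$. But the ``length computation above'' only shows that $s_jws_j$ is a reduced word of length $k+2$, i.e.\ that the single generator $s_j$ does not centralise $w$. It does not rule out $v=s_jv'$ with $v'\neq 1$: the identity $v^{-1}wv=w$ becomes $(v')^{-1}(s_jws_j)v'=w$, and nothing you have proved prevents $v'$ from conjugating the longer element $s_jws_j$ back down to $w$---conjugation can and does decrease length. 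Closing this requires an actual induction on $\ell(v)$ tracking where the outermost letters of $v^{-1}wv$ cancel, in the style of the proof of Theorem \ref{conditionforconjugate}, or some other counting argument; as written the contradiction is asserted rather than derived. (A minor additional slip: when $i_{t_0+1}=i_{t_0}+2$ the letter $s_j$ may fail to commute with both $s_{i_{t_0}}$ and $s_{i_{t_0+1}}$, so one cannot always push $s_j$ adjacent to a single letter of $w$; the conclusion that $s_jws_j$ is reduced of length $k+2$ still follows directly from Lemma \ref{lem1}, since a letter of $s_j^*$ separates the two occurrences of $s_j$.)
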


We now present the main result of this section.

\begin{theorem}\label{theoremrho}
Let $\rho_n$ denote the number of conjugacy classes of involutions in $T_n$. Then $$\rho_n =1+ \rho_{n-1} + \rho_{n-2}$$
 for all $n\geq 4$,  where $\rho_2=1$ and $\rho_3=2$.
\end{theorem}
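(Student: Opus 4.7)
The plan is to exploit Corollary \ref{cor3.2}, which establishes a bijection between conjugacy classes of involutions in $T_n$ and the set $\mathcal{A}_n$ defined in \eqref{A_n}. Under this bijection $\rho_n = |\mathcal{A}_n|$, so the theorem reduces to the combinatorial identity $|\mathcal{A}_n| = 1 + |\mathcal{A}_{n-1}| + |\mathcal{A}_{n-2}|$ for $n \ge 4$. The base cases $\rho_2 = 1$ and $\rho_3 = 2$ follow by directly enumerating $\mathcal{A}_2 = \{s_1\}$ and $\mathcal{A}_3 = \{s_1, s_2\}$.

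To establish the counting identity, I would partition $\mathcal{A}_n$ according to whether or not the generator $s_{n-1}$ appears in a word $w = s_{i_1} s_{i_2} \cdots s_{i_k} \in \mathcal{A}_n$. The key structural observation is that if $s_{n-1}$ occurs in $w$, then the gap condition $i_{t+1} - i_t \ge 2$ forces it to appear as the final factor $s_{i_k}$; otherwise the next index would have to satisfy $i_{t+1} \ge n+1$, which is impossible.

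The set $\mathcal{A}_n$ thus splits into three disjoint pieces. Words in which $s_{n-1}$ does not appear are precisely the words in $\mathcal{A}_{n-1}$, contributing $\rho_{n-1}$ elements. The lone word $w = s_{n-1}$ contributes $1$. Words of the form $s_{i_1} \cdots s_{i_{k-1}} s_{n-1}$ with $k \ge 2$ satisfy $i_{k-1} \le n-3$ by the constraint $i_k - i_{k-1} \ge 2$, so the prefix $s_{i_1} \cdots s_{i_{k-1}}$ is a word in $\mathcal{A}_{n-2}$; conversely, appending $s_{n-1}$ to any word in $\mathcal{A}_{n-2}$ produces such a word, so this piece contributes exactly $\rho_{n-2}$. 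Adding the three counts yields the desired recurrence.

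I do not anticipate any serious obstacle here. The only subtlety worth flagging is that in the third piece the prefix lies in $\mathcal{A}_{n-2}$ rather than $\mathcal{A}_{n-1}$ (because the penultimate index is constrained to $i_{k-1} \le n-3$), and this sharper index bound is precisely what produces the Fibonacci-type recursion instead of a first-order one.
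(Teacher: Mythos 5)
Your proposal is correct and follows essentially the same route as the paper: both reduce to counting $\mathcal{A}_n$ via Corollary \ref{cor3.2} and then split $\mathcal{A}_n$ into the singleton $\{s_{n-1}\}$, the words of length at least $2$ ending in $s_{n-1}$ (in bijection with $\mathcal{A}_{n-2}$ by deleting the last letter), and the words avoiding $s_{n-1}$ (which form $\mathcal{A}_{n-1}$). Your extra remark that the gap condition forces $s_{n-1}$ to be the final letter is a correct and worthwhile clarification, but the argument is otherwise identical to the paper's.
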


\begin{proof}
Consider the set $\mathcal{A}_n$ as defined in \eqref{A_n}. Then, by Corollary \ref{cor3.2}, we have $\rho_n = |\mathcal{A}_n|$. Note that $\mathcal{A}_2 = \{s_1\}$ and $\mathcal{A}_3 = \{s_1, s_2\}$, which implies that $\rho_2=1$ and $\rho_3=2$. We now proceed to compute $\rho_n$ for $n \geq 4$.  We define three mutually disjoint subsets of $\mathcal{A}_n$ as follows:
\begin{itemize}
\item[(i)] $\mathcal{B}_n=\{s_{n-1}\}$.
\item[(ii)] $\mathcal{C}_n=\{s_{i_1} s_{i_2}\dots s_{i_k} \mid  k>1, i_k=n-1\}$.
\item[(iii)] $\mathcal{D}_n=\{s_{i_1} s_{i_2}\cdots s_{i_k} \mid  i_k< n-1\}$.
\end{itemize} 
It is easy to see that $\mathcal{A}_n = \mathcal{B}_n  \sqcup \mathcal{C}_n  \sqcup  \mathcal{D}_n$, and hence
$$|\mathcal{A}_n| = |\mathcal{B}_n| + |\mathcal{C}_n| + |\mathcal{D}_n| = 1 + |\mathcal{C}_n| + |\mathcal{D}_n|.$$
Now, the map sending $s_{i_1} s_{i_2}\dots s_{i_k}$ to $s_{i_1} s_{i_2}\dots s_{i_{k-1}}$ gives a bijection between the sets $\mathcal{C}_n$ and $\mathcal{A}_{n-2}$,  and hence $| \mathcal{C}_n | = | \mathcal{A}_{n-2} |$. Also, note that $\mathcal{D}_n = \mathcal{A}_{n-1}$. Thus, we have 
$$|\mathcal{A}_n| = 1 + |\mathcal{A}_{n-1}| + |\mathcal{A}_{n-2}|,$$
which implies that 
$$\rho_n = 1 + \rho_{n-1} + \rho_{n-2}.$$
\end{proof}

\begin{corollary}
For each $n\geq 2$, $ \rho_{n} + 1 =  \textbf{F}_{n+1} $, where $\left(\textbf{F}_{n}\right)_{n \geq 1}$ is the well-known Fibonacci sequence with $\textbf{F}_1 =\textbf{F}_2 = 1$. In particular,
$$\rho_{n} = \sum_{k=1}^{\lfloor{\frac{n}{2}}\rfloor} 
{n-k \choose k}.$$
\end{corollary}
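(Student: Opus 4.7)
The plan is to derive both parts from Theorem \ref{theoremrho} together with a direct count of $\mathcal{A}_n$, treating them essentially independently.

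For the Fibonacci identification, I set $a_n := \rho_n + 1$. Theorem \ref{theoremrho} then yields $a_n = a_{n-1} + a_{n-2}$ for $n \geq 4$, since $\rho_n + 1 = 2 + \rho_{n-1} + \rho_{n-2} = (\rho_{n-1}+1) + (\rho_{n-2}+1)$. Combined with the base cases $a_2 = \rho_2 + 1 = 2 = \mathbf{F}_3$ and $a_3 = \rho_3 + 1 = 3 = \mathbf{F}_4$, a routine induction immediately gives $a_n = \mathbf{F}_{n+1}$, which is the first claim.

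For the closed form, I would count $\mathcal{A}_n$ directly. By Corollary \ref{cor3.2} and \eqref{A_n}, an element of $\mathcal{A}_n$ of length $k$ is determined by a sequence $1 \leq i_1 < i_2 < \cdots < i_k \leq n-1$ with $i_{t+1} - i_t \geq 2$. The index shift $j_t := i_t - (t-1)$ converts the gap condition into the plain strict inequality $j_1 < j_2 < \cdots < j_k$ with each $j_t \in \{1, 2, \ldots, n-k\}$, producing a bijection between length-$k$ elements of $\mathcal{A}_n$ and arbitrary $k$-subsets of $\{1, \ldots, n-k\}$. There are exactly $\binom{n-k}{k}$ such subsets, and these are nonzero precisely for $1 \leq k \leq \lfloor n/2 \rfloor$, which gives the formula $\rho_n = \sum_{k=1}^{\lfloor n/2 \rfloor} \binom{n-k}{k}$.

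There is no serious obstacle here: both steps are elementary, with the only substantive observation being the shift $j_t = i_t - (t-1)$ that reinterprets the gap condition as a standard strict-inequality constraint. As an internal consistency check, the two formulas together reproduce the classical Fibonacci identity $\mathbf{F}_{n+1} = \sum_{k=0}^{\lfloor n/2 \rfloor} \binom{n-k}{k}$, the extra $k = 0$ term $\binom{n}{0} = 1$ accounting precisely for the $+1$ in $\rho_n + 1 = \mathbf{F}_{n+1}$.
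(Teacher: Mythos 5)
Your argument is correct, and the first half coincides with the paper's: the paper likewise observes $\rho_n+1=(\rho_{n-1}+1)+(\rho_{n-2}+1)$ and identifies $\rho_n+1$ with $\textbf{F}_{n+1}$ from the base cases. Where you diverge is the closed form. The paper simply cites the known expression for the $(n+1)$-st Fibonacci number, so its derivation of $\rho_n=\sum_{k=1}^{\lfloor n/2\rfloor}\binom{n-k}{k}$ rides entirely on the Fibonacci identification. You instead count $\mathcal{A}_n$ directly: the shift $j_t=i_t-(t-1)$ turns the gap condition $i_{t+1}-i_t\geq 2$ into a strict chain in $\{1,\dots,n-k\}$, giving $\binom{n-k}{k}$ classes of length-$k$ involutions (and, as a bonus, a refinement of $\rho_n$ by word length that the paper does not record). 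Your route is self-contained and, combined with the recursion, actually reproves the classical identity $\textbf{F}_{n+1}=\sum_{k\geq 0}\binom{n-k}{k}$ rather than assuming it; the paper's route is shorter but leans on the external reference. Both are valid; just make sure you note that $k=0$ is excluded because the identity is not an involution, which is exactly what the $+1$ in $\rho_n+1=\textbf{F}_{n+1}$ absorbs, as you observed.
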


\begin{proof}
Observe that $\rho_n +1= (\rho_{n-1} +1)+ (\rho_{n-2}+1)$. The first assertion is clear from  Theorem \ref{theoremrho}. The formula for $\rho_n$ can be derived from the well-known value of $(n+1)$-term of the Fibonacci sequence \cite[Chapter 3, Section 3.1.2]{Rosen}.
\end{proof}
\bigskip

\section{z-classes in twin groups}\label{z-classes}
Two elements $x,y$ of a group $G$ are said to be $z$-\textit{equivalent} if their centralisers $\C_G(x)$ and $\C_G(y)$ are conjugates in $G$. A $z$-equivalence class is called a $z$-\textit{class}. We would like to mention that $z$-classes also appear naturally in geometry and topology. We refer the reader to \cite{Kulkarni} for a quick review of the same.
\par
It is clear that conjugate elements are $z$-equivalent and the converse is not true. For example, $\C_{T_4}(s_1s_3)=\C_{T_4}(s_3)$, but $s_1s_3$ and $s_3$ are not conjugate. Thus, to investigate $z$-classes in $T_n$, it is sufficient to study centralisers of cyclically reduced words (by Corollary \ref{cor1}). Note that every element of $T_n$ is either torsion-free or of order 2. We first show that only $T_2$ and $T_3$ have finitely many $z$-classes, and then compute number of $z$-classes of involutions in $T_n$ for $n \ge 2$.

\begin{proposition}
$T_n$ has finitely many $z$-classes if and only if $n= 2$ or $3$.
\end{proposition}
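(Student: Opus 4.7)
The plan is to handle the two directions separately.

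For $n\in\{2,3\}$, I would enumerate $z$-classes directly. Since $T_2\cong\mathbb{Z}_2$ is abelian, every element has centraliser $T_2$, giving a single $z$-class. For $T_3\cong D_\infty$, the elements split into the identity (centraliser $T_3$), non-trivial rotations $(s_1s_2)^k$ with $k\ne 0$ (all having centraliser $\langle s_1s_2\rangle$), and reflections (each of order two with order-two centraliser), which themselves split into exactly two $T_3$-conjugacy classes (those conjugate to $s_1$ and those conjugate to $s_2$). This yields four $z$-classes in total.

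For $n\ge 4$, the plan is to exhibit an explicit infinite family $\{w_k\}_{k\ge 1}$ of cyclically reduced elements of $T_n$ whose centralisers are pairwise non-conjugate. A natural candidate is
$$w_k=(s_1s_2)^ks_3s_2,$$
of length $2k+2$. Using Lemma \ref{lem1} and Lemma \ref{cyclically-reduced}, one checks that each $w_k$ is cyclically reduced, and by Theorem \ref{conditionforconjugate}, the $w_k$'s are pairwise non-conjugate and non-inverse-conjugate since they have distinct lengths and their cyclic permutations do not agree modulo flips.

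The core step is to show that
$$\C_{T_n}(w_k)=\langle w_k\rangle\times\langle s_5,\ldots,s_{n-1}\rangle$$
for each $k$, with the second factor understood as trivial when $n\le 5$. The inclusion ``$\supseteq$'' is immediate since each $s_j$ with $j\ge 5$ commutes with $s_1,s_2,s_3$. For ``$\subseteq$'', I would argue by a word-combinatorial analysis analogous to the proof of Proposition \ref{prop-inv}: any element commuting with $w_k$, when written in reduced form, is forced via flip and deletion transformations to lie in the stated subgroup. Once this formula is in hand, the cyclic factor $\langle w_k\rangle$ is intrinsically characterised as the maximal torsion-free subgroup of $\C(w_k)$, so any element of $T_n$ conjugating $\C(w_k)$ to $\C(w_l)$ would send $w_k$ to $w_l^{\pm 1}$, contradicting the non-conjugacy established above. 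Hence the $\C(w_k)$'s lie in distinct $T_n$-conjugacy classes of subgroups, yielding infinitely many $z$-classes.

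The main obstacle is the centraliser computation: unlike for involutions (Lemma \ref{centraliser}), there is no ready-made description of centralisers of general torsion-free cyclically reduced elements, so pinning down exactly which elements commute with $w_k$ requires a careful application of the reduced-word combinatorics of Section \ref{basic}, extending the style of argument used in Proposition \ref{prop-inv}.
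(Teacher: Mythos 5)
Your overall strategy is the same as the paper's: enumerate the $z$-classes directly for $n=2,3$, and for $n\geq 4$ produce an infinite family of cyclically reduced words supported on $\{s_1,s_2,s_3\}$ whose centralisers have the form $\langle w\rangle\times\langle s_5,\dots,s_{n-1}\rangle$, then argue that conjugacy of the centralisers would force conjugacy of the words, contradicting Theorem \ref{conditionforconjugate}. (The paper's family is $X_1=s_1s_2$, $X_2=s_1s_2s_3$, $X_3=s_1s_2s_3s_2$, $X_{2i}=X_{2i-1}s_3$, $X_{2i+1}=X_{2i}s_2$; your $(s_1s_2)^ks_3s_2$ plays the same role, and distinctness of lengths of cyclically reduced words does rule out conjugacy. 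Your count of four $z$-classes in $T_3$ is, if anything, more careful than the paper's three, which omits the class of the identity; either way finiteness holds.)

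There is, however, one step whose stated justification fails: $\langle w_k\rangle$ is \emph{not} the maximal torsion-free subgroup of $\C_{T_n}(w_k)=\langle w_k\rangle\times\langle s_5,\dots,s_{n-1}\rangle$ once $n\geq 7$, because the second factor is itself a twin group containing infinite-order elements such as $s_5s_6$, so $\langle w_k\rangle\times\langle s_5s_6\rangle$ is a strictly larger torsion-free subgroup; and even for $n=6$ maximal torsion-free subgroups are not unique (e.g.\ $\langle w_ks_5\rangle$ is another one), so the claim that a conjugation must send $w_k$ to $w_l^{\pm1}$ does not follow as written. The conclusion is reachable by a different invariant: any $g$ with $g^{-1}\C_{T_n}(w_k)g=\C_{T_n}(w_l)$ carries $\Z\big(\C_{T_n}(w_k)\big)=\langle w_k\rangle\times\Z\big(\langle s_5,\dots,s_{n-1}\rangle\big)$ onto the corresponding center; the latter is abelian, its torsion subgroup is characteristic, and modulo torsion $g$ must send $w_k$ to $w_l^{\pm1}$ times a torsion element, all of which are cyclically reduced of length different from $\ell(w_k)$ --- contradiction. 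You should repair the argument along these lines (this is presumably what the paper's ``it can be easily deduced'' hides). As for the centraliser formula itself, which you correctly identify as the main obstacle, the paper also only asserts it (``it is easy to check''), so your sketch is at the same level of detail there; the genuine defect in your write-up is the torsion-free-subgroup characterisation.
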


\begin{proof}
Since $T_2 \cong \mathbb{Z}_2$,  there are two conjugacy classes and only one $z$-class. In $T_3$, there are infinitely many conjugacy classes, namely, $s_1^{T_3}, s_2^{T_3}, (s_1s_2)^{T_3}, \big((s_1s_2)^2\big)^{T_3},  \big((s_1s_2)^3\big)^{T_3},$ and so on. We note that 
\begin{align*}
\C_{T_3}(s_1) &=\langle s_1 \rangle,\\
\C_{T_3}(s_2) &=\langle s_2 \rangle,\\
\C_{T_3}(s_1s_2) &=\langle s_1s_2 \rangle =  \C_{T_3}\big((s_1s_2)^m\big),~ m \geq 2.
\end{align*}
By Theorem \ref{conditionforconjugate}, it follows that $\C_{T_3}(s_1),\; \C_{T_3}(s_2)$ and $\C_{T_3}(s_1s_2)$ are pairwise not conjugate. Therefore, there are three $z$-classes in $T_3$.
\par
Now, we proceed to prove that $T_n$ has infinitely many $z$-classes for $n\geq 4$. It suffices to construct an infinite sequence of cyclically reduced words in $T_n$ such that their centralisers are not pairwise conjugate in $T_n$.
We define $X_1 = s_1s_2$, $X_2=s_1s_2s_3$, $X_3 = s_1s_2s_3s_2,$ $X_{2i} = X_{2i-1}s_3$, $X_{2i+1} = X_{2i}s_2$ for $i \geq 2$. It is easy to check that $ \C_{T_n}(X_1)=\langle X_1 \rangle\times H $ and $\C_{T_n}(X_j)= \langle X_j \rangle \times K$ for $ j\geq 2$, where $H = \langle s_4, s_5, \dots,  s_{n-1}\rangle$ and $K =\langle s_5, s_6, \dots , s_{n-1} \rangle$. It can be easily deduced that if $\C_{T_n}(X_i)$ is conjugate to $\C_{T_n}(X_j)$ for some $i\neq j$, then $X_i$ is conjugate to $X_j$. But this is not possible due to Theorem \ref{conditionforconjugate}.
\end{proof}

Now, we proceed to compute the number of $z$-classes of involutions in $T_n$. As mentioned earlier, it is sufficient to consider centralisers of cyclically reduced involutions in $T_n$. Thus, for the rest of this section, by an involution, we mean a cyclically reduced involution, that is, an element of $\mathcal{A}_n$. We begin with the following observation.

\begin{lemma}\label{equal-non-conjugate}
Let $w_1$ and $w_2$ be two involutions in $T_n$. Then either $\C_{T_n}(w_1) = \C_{T_n}(w_2)$ or $\C_{T_n}(w_1)$ and $\C_{T_n}(w_2)$ are not conjugates of each other.
\end{lemma}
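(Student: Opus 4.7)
The plan is to reduce the statement to a general fact about standard parabolic subgroups of $T_n$ and then separate them by passing to the abelianization. By Lemma \ref{centraliser}, both centralizers are standard parabolics, namely $\C_{T_n}(w_1) = \langle A \rangle$ and $\C_{T_n}(w_2) = \langle B \rangle$ for some subsets $A, B \subseteq S$. So it suffices to prove the following: if two standard parabolic subgroups $\langle A \rangle$ and $\langle B \rangle$ of $T_n$ are conjugate in $T_n$, then $A = B$ (which in particular gives $\langle A \rangle = \langle B \rangle$).

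For this I would pass to the abelianization $\phi \colon T_n \twoheadrightarrow T_n^{\mathrm{ab}}$. Since the commutation relations $s_i s_j = s_j s_i$ for $|i-j| \ge 2$ are automatic in any abelian group, the presentation of $T_n$ immediately gives $T_n^{\mathrm{ab}} \cong (\mathbb{Z}/2)^{n-1}$ with $\phi(s_1), \ldots, \phi(s_{n-1})$ as a standard $\mathbb{F}_2$-basis. For any $C \subseteq S$, the image $\phi(\langle C \rangle)$ is then the coordinate $\mathbb{F}_2$-subspace spanned by $\{\phi(s_i) : s_i \in C\}$, and $C$ is recovered from this subspace as the set of standard basis vectors it contains, thanks to the $\mathbb{F}_2$-linear independence of $\phi(s_1), \ldots, \phi(s_{n-1})$.

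Now suppose $g\langle A\rangle g^{-1} = \langle B\rangle$ for some $g \in T_n$. Applying $\phi$, and using that conjugation is trivial in the abelian group $T_n^{\mathrm{ab}}$, one obtains $\phi(\langle A\rangle) = \phi(\langle B\rangle)$ inside $(\mathbb{Z}/2)^{n-1}$. By the observation in the previous paragraph this forces $A = B$, and consequently $\C_{T_n}(w_1) = \langle A \rangle = \langle B \rangle = \C_{T_n}(w_2)$, as required. The only mildly subtle point is confirming that the image of a standard parabolic under $\phi$ is an honest coordinate subspace uniquely tagged by its generating subset, which is however immediate from the basis property above, so there is no serious obstacle in this argument.
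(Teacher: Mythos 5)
Your proof is correct and is essentially the paper's own argument in different packaging: the paper picks $s_j \in \C_{T_n}(w_1)\setminus\C_{T_n}(w_2)$ and observes that every element of every conjugate of $\C_{T_n}(w_2)$ contains $s_j$ an even number of times, which is exactly the statement that the $j$-th coordinate of its image in the abelianization $(\mathbb{Z}/2)^{n-1}$ vanishes. Your passage to $T_n^{\mathrm{ab}}$ encodes the same mod-2 exponent-sum invariant and the same conjugation-invariance, so the two proofs coincide in substance.
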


\begin{proof}
Let us suppose $\C_{T_n}(w_1) \neq \C_{T_n}(w_2)$. Then, without loss of generality, we can assume that there exists some $s_j\in \C_{T_n}(w_1) \setminus \C_{T_n}(w_2)$. Thus, we can write $\C_{T_n}(w_2) = \langle s_{i_1}, s_{i_2}, \dots , s_{i_k} \rangle$ such that $j \notin \{ i_1, i_2, \dots , i_k\}$. Consequently, for each $g \in T_n$,  $\C_{T_n}(w_2)^g = \langle s_{i_1}^g, s_{i_2}^g, \dots , s_{i_k}^g \rangle$. Thus, each word in $\C_{T_n}(w_2)^g$ contains $s_j$ even number of times, and hence $s_j \notin \C_{T_n}(w_2)^g$ for any $g\in T_n$. Therefore,  $\C_{T_n}(w_1)$ and $\C_{T_n}(w_2)$ are not conjugates of each other.
\end{proof}

By virtue of the preceding lemma, the number of $z$-classes of involutions in $T_n$ is equal to the number of distinct centralisers of cyclically reduced involutions in $T_n$.

Let $\lambda_n$ denote the number of distinct centralisers of involutions in $T_n$, $n\geq 2$. A direct computation yields $\lambda_2 =1$,  $\lambda_3 =2$,  $\lambda_4 =2$, $\lambda_5 =5$ and $\lambda_6 =8$. The following main result of this section gives a recursive formula for $\lambda_n$, $n\geq 7$.

\begin{theorem}\label{thm-zclass}
Let $\lambda_n$ be as defined above. Then,  for $n\geq 7$,  $$\lambda_n = \Big( \sum_{i=3}^{n-2} \lambda_i \Big )- \lambda_{n-4} + n-2 .$$
\end{theorem}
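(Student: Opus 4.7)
The plan is to stratify the centralisers according to the largest index of a representative involution in $\mathcal{A}_n$, and then correct for over-counting. Identify each $w = s_{i_1}\cdots s_{i_\ell}\in\mathcal{A}_n$ with its index set $I = \{i_1,\ldots,i_\ell\}\subseteq\{1,\ldots,n-1\}$ (so $I$ has no two consecutive integers), and set $N(I) = \bigcup_{i\in I}\{i-1,i+1\}\cap\{1,\ldots,n-1\}$ and $J(I) = \{1,\ldots,n-1\}\setminus N(I)$. Lemma \ref{centraliser} then gives $\C_{T_n}(w) = \langle s_j : j\in J(I)\rangle$, so $\lambda_n$ counts the distinct sets $J(I)$ as $I$ ranges over nonempty such subsets.

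For each $k\in\{1,\ldots,n-1\}$ let $\nu_k$ be the number of distinct sets $J(I)$ with $\max I = k$. Writing $I = I^*\cup\{k\}$, $I^*\subseteq\{1,\ldots,k-2\}$ with no two consecutive, one computes
\[
J(I) \;=\; \bigl(\{1,\ldots,k-2\}\setminus N(I^*)\bigr)\cup\{k\}\cup\{k+2,\ldots,n-1\},
\]
so $J(I)$ is determined by its restriction to $\{1,\ldots,k-2\}$, which is either $\{1,\ldots,k-2\}$ (when $I^*=\emptyset$) or a centraliser set of an involution in $T_{k-1}$ (otherwise). Since $\{1,\ldots,k-2\}$ is not itself such a centraliser when $k-2\geq 2$ (every vertex of the underlying path has a neighbour), this gives $\nu_k = 1+\lambda_{k-1}$ for $k\geq 4$, while direct checks yield $\nu_1 = \nu_2 = \nu_3 = 1$. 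Summing,
\[
\sum_{k=1}^{n-1}\nu_k \;=\; 3 + \sum_{k=4}^{n-1}(1+\lambda_{k-1}) \;=\; n-1 + \sum_{i=3}^{n-2}\lambda_i.
\]

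The crux is the over-counting analysis. Note $\max N(I) = k+1$ whenever $\max I = k\leq n-2$, while $\max N(I) = n-2$ when $\max I = n-1$. So for a fixed centraliser set $J$ the value of $\max I$ over preimages is uniquely recovered unless $\max(\{1,\ldots,n-1\}\setminus J) = n-2$ --- i.e., unless $n-1\in J$ and $n-2\notin J$ --- in which case only the two values $\max I \in \{n-3, n-1\}$ are possible. Hence $\sum_k\nu_k$ over-counts by exactly one each $J$ with $n-1\in J,\ n-2\notin J$ that admits preimages of \emph{both} types. I claim every such $J$ admits a preimage with $\max I = n-1$ (adjoin $n-1$ to any preimage $I_0$: this is legal since $n-2\notin I_0$, and $N$ is unchanged since $n-2\in N(I_0)$ already), and admits one with $\max I = n-3$ if and only if $n-4\notin J$. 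The forward direction is immediate ($n-3\in I \Rightarrow n-4\in N(I)$); for the converse, take $I_0\ni n-1$, and note $n-4\in N(I_0)$ forces $n-3\in I_0$ or $n-5\in I_0$. In the first case $I_1 := I_0\setminus\{n-1\}$ works; in the second, the swap $I_1 := (I_0\setminus\{n-1\})\cup\{n-3\}$ is independent and satisfies $N(I_1) = N(I_0)$ --- verified positionwise at the only affected positions $j\in\{n-4, n-2\}$, using independence of $I_0$ and the assumption $n-5\in I_0$.

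The doubly counted centralisers are therefore exactly those arising from some preimage with $\max I = n-3$, so their number equals $\nu_{n-3} = 1+\lambda_{n-4}$. Subtracting,
\[
\lambda_n \;=\; \Bigl(n-1 + \sum_{i=3}^{n-2}\lambda_i\Bigr) - (1+\lambda_{n-4}) \;=\; \sum_{i=3}^{n-2}\lambda_i - \lambda_{n-4} + n-2,
\]
as required. The main obstacle is the positionwise verification that the swap $I_0\mapsto I_1$ preserves $N$; every other step is routine bookkeeping.
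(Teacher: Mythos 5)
Your proof is correct and follows essentially the same route as the paper: stratify the centraliser sets by the maximal index $k$ of a representing involution, show the only cross-stratum collision is between $k=n-3$ and $k=n-1$ (via adjoining $s_{n-1}$, exactly the paper's $\C_{T_n}(ws_{n-3})=\C_{T_n}(ws_{n-3}s_{n-1})$), and subtract $\nu_{n-3}=1+\lambda_{n-4}$. Your derivation of $\nu_k=1+\lambda_{k-1}$ by restricting $J(I)$ to $\{1,\dots,k-2\}$ is a somewhat more direct packaging of what the paper obtains through its auxiliary recursion for $\alpha_{n-1}$, but the underlying counting is the same.
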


We establish the preceding theorem through a sequence of lemmas.

\begin{lemma}\label{Lemma-newCounting2}
The number of distinct centralisers of involutions ending with $s_i$ in $T_n$ is equal to number of distinct centralisers of involutions ending with $s_i$ in $T_m$ for all $n, m \geq i+1$.
\end{lemma}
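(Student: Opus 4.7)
The plan is to apply Lemma \ref{centraliser} explicitly and observe that for a cyclically reduced involution ending with $s_i$, the ``forbidden'' generators — those removed from $S$ to form the centraliser — are always trapped inside the fixed set $\{s_1, s_2, \ldots, s_{i+1}\}$, regardless of $n$. This will let us identify the part of the centraliser that varies with $n$ and the part that is intrinsic to the word, and then count accordingly.

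First, I would use Corollary \ref{cor3.2} to write any such involution as $w = s_{i_1}s_{i_2}\cdots s_{i_k}$ with $i_{t+1}-i_t\geq 2$ and $i_k=i$, which forces $1\leq i_t\leq i$ for every $t$. By Lemma \ref{centraliser},
\[
\C_{T_n}(w) = \big\langle \{s_1,\ldots,s_{n-1}\} \setminus \textstyle\bigcup_{t=1}^{k} s_{i_t}^*\big\rangle.
\]
Since $s_{i_t}^*\subseteq\{s_{i_t-1},s_{i_t+1}\}\subseteq\{s_1,\ldots,s_{i+1}\}$, the subset $F(w):=\bigcup_{t=1}^k s_{i_t}^*$ lies inside $\{s_1,\ldots,s_{i+1}\}$ and depends only on the word $w$, not on $n$. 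Consequently, setting $G(w):=\{s_1,\ldots,s_i\}\setminus F(w)$, one obtains the decomposition
\[
\C_{T_n}(w) = \big\langle G(w)\cup\{s_{i+2},s_{i+3},\ldots,s_{n-1}\}\big\rangle,
\]
valid for all $n\geq i+1$ (with the convention that the second piece is empty when $n=i+1$; note also that $s_{i+1}\in s_i^*\subseteq F(w)$ whenever $n\geq i+2$, so it is automatically excluded).

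Next, I would define a map $\Phi$ from the set of centralisers of such involutions in $T_n$ to the corresponding set in $T_m$ by replacing the tail $\{s_{i+2},\ldots,s_{n-1}\}$ with $\{s_{i+2},\ldots,s_{m-1}\}$. Since the set of cyclically reduced involutions ending with $s_i$ only involves letters $s_1,\ldots,s_i$, it is literally the same collection of words in $T_n$ and in $T_m$, and $\Phi$ corresponds on the level of words to the identity map. Thus $\Phi$ is surjective; for injectivity, I would invoke the standard fact about Coxeter systems that distinct subsets of $S$ generate distinct (standard parabolic) subgroups, so two centralisers $\C_{T_n}(w)$ and $\C_{T_n}(w')$ coincide precisely when $G(w)=G(w')$, a condition independent of $n$.

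The main obstacle — really more of a bookkeeping point than a genuine difficulty — is handling the boundary case $n=i+1$, where the tail $\{s_{i+2},\ldots,s_{n-1}\}$ is empty and the generator $s_{i+1}$ is absent, separately from the generic case $n\geq i+2$. The observation that $s_{i+1}\in s_i^*$ is always among the forbidden generators when it exists unifies the two cases, since $G(w)$ is defined purely as a subset of $\{s_1,\ldots,s_i\}$ and carries all the information needed to distinguish centralisers.
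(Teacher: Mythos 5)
Your proposal is correct and follows essentially the same route as the paper: the paper likewise reduces to comparing $T_{i+1}$ with $T_m$, observes that $\C_{T_m}(ws_i)$ is obtained from $\C_{T_{i+1}}(ws_i)$ by adjoining the generators $s_{i+2},\dots,s_{m-1}$ (with $s_{i+1}$ excluded since it lies in $s_i^*$), and concludes that equality of centralisers transfers between the two groups. Your version merely makes explicit the decomposition $\C_{T_n}(w)=\big\langle G(w)\cup\{s_{i+2},\dots,s_{n-1}\}\big\rangle$ and the standard-parabolic fact underlying injectivity, both of which the paper leaves implicit.
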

\begin{proof}
It is sufficient to prove the assertion for $n = i+1$ and $m> n$. Let $ws_i$ be an involution ending with $s_i$. Then

\begin{equation*}
\C_{T_m}(ws_i) =
 \begin{cases}
\C_{T_n}(ws_i)~~ \text{ if } m=n+1 =i+2,\\
 \big\langle \C_{T_n}(ws_i), s_{n+1}, s_{n+2}, \dots, s_{m-1} \big\rangle~~ \text{ if }  m\geq n+2 = i+3.
 \end{cases}
 \end{equation*}
Hence $\C_{T_m}(w_1s_i) = \C_{T_m}(w_2s_i)$ if and only if $\C_{T_n}(w_1s_i) = \C_{T_n}(w_2s_i)$. This completes the proof.
\end{proof}
 
The preceding lemma allows us to define $\alpha_i$ as the number of distinct centralisers of involutions ending with $s_i$ in $T_n$ for all $n\geq i+1$. 

\begin{lemma}\label{Lemmafor-newLamdaAlpha}
In $T_n$, the centraliser of an involution ending with $s_i$ is not equal to the centraliser of any involution ending with $s_j$ for $i< j$, unless $i=n-3$ and $j=n-1$. Moreover, the centraliser of an involution ending with $s_{n-3}$ is equal to the centraliser of some involution ending with $s_{n-1}$.
\end{lemma}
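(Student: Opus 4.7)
My plan is to reformulate the centraliser equality as an equality of subsets of $\{1,\dots,n-1\}$. For an involution $w = s_{i_1}\cdots s_{i_k} \in \mathcal{A}_n$ (with $i_1 < i_2 < \cdots < i_k$, since $i_{t+1} - i_t \geq 2$), set
\[
F(w) := \bigcup_{t=1}^{k} \big(\{i_t - 1,\, i_t + 1\} \cap \{1, \dots, n-1\}\big).
\]
By Lemma~\ref{centraliser}, $\C_{T_n}(w)$ is the standard parabolic generated by $\{s_j : j \notin F(w)\}$; and since standard parabolic subgroups of a Coxeter group are determined by their generating sets (seen here at once by projecting to $T_n^{\mathrm{ab}} \cong \mathbb{Z}_2^{n-1}$), the equality $\C_{T_n}(w_1) = \C_{T_n}(w_2)$ is equivalent to $F(w_1) = F(w_2)$.

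For the first assertion, I would compare $\max F(w_1)$ and $\max F(w_2)$. If $w$ ends with $s_i$, then every index of $w$ lies in $\{1,\dots,i\}$, so $\max F(w) = i+1$ when $i \leq n-2$ and $\max F(w) = n-2$ when $i = n-1$. Assume $F(w_1) = F(w_2)$ with $w_1$ ending at $s_i$, $w_2$ at $s_j$, and $i < j$. If $j \leq n-2$, then $\max F(w_2) = j + 1 > i + 1 = \max F(w_1)$, contradiction. Hence $j = n-1$ and $\max F(w_2) = n-2$. If $i = n-2$, then $n - 1 \in F(w_1)$, while $n - 1 \notin F(w_2)$ (since $w_2$ containing $s_{n-2}$ would violate $i_{t+1} - i_t \geq 2$ against its last letter $s_{n-1}$), a contradiction. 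So $i \leq n-3$; then $\max F(w_1) = i + 1 \leq n - 2$, and the required equality $i + 1 = n - 2$ forces $i = n - 3$.

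For the second assertion, given an involution $w_1 \in \mathcal{A}_n$ ending with $s_{n-3}$, I set $w_2 := w_1\, s_{n-1}$. Since $(n-1) - (n-3) = 2$, $w_2$ still lies in $\mathcal{A}_n$, so is an involution ending with $s_{n-1}$. Appending $s_{n-1}$ contributes $\{n-2,\, n\} \cap \{1,\dots,n-1\} = \{n - 2\}$ to $F$, but $n - 2$ is already in $F(w_1)$ as the right neighbour of $n - 3$. Hence $F(w_2) = F(w_1)$, giving $\C_{T_n}(w_2) = \C_{T_n}(w_1)$.

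The main subtlety, and the real source of the exception, is the boundary asymmetry at the index $n-1$: because $n \notin \{1, \dots, n-1\}$, the generator $s_{n-1}$ contributes only a single element to $F$ instead of two. This is precisely what makes $(i, j) = (n-3, n-1)$ the unique exceptional pair.
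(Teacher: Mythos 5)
Your proof is correct and takes essentially the same approach as the paper's: comparing $\max F(w_1)$ with $\max F(w_2)$ amounts to exhibiting the distinguishing generator ($s_{j+1}$ when $j\le n-2$, and $s_{n-2}$ or $s_{n-1}$ when $j=n-1$) that the paper points to directly via Lemma~\ref{centraliser}. The second assertion is likewise the paper's identity $\C_{T_n}(ws_{n-3})=\C_{T_n}(ws_{n-3}s_{n-1})$, and your abelianization remark just makes explicit the (implicitly used) fact that these standard parabolic subgroups are determined by their generating sets.
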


\begin{proof}
Let $w_1s_i$ and $w_2s_j$ be two involutions ending with $s_i$ and $s_j$, respectively, such that $i< j$. If $j\leq n-2$, then $s_{j+1}\in \C_{T_n}(w_1s_i)$, but $s_{j+1}\notin \C_{T_n}(w_2s_j)$. If $j = n-1$, then unless $i=n-3$, $s_{n-2}\in  \C_{T_n}(w_1s_i)$, but $s_{n-2}\notin \C_{T_n}(w_2s_j)$. This proves the first assertion of the lemma. For the second assertion, if $ws_{n-3}$ is an involution ending with $s_{n-3}$, then $\C_{T_n}(ws_{n-3}) = \C_{T_n}(ws_{n-3}s_{n-1})$.
\end{proof}

\begin{lemma}\label{lemmafor-newlambda}
For $n\geq 4$,
$$\lambda_n = \Big( \sum_{i=1}^{n-1} \alpha_i \Big ) - \alpha_{n-3}.$$
\end{lemma}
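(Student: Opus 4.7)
The plan is to partition the collection of distinct centralisers of cyclically reduced involutions in $T_n$ according to the index of the last generator $s_i$ appearing in the representative word. For each $1 \leq i \leq n-1$, I would write $\mathcal{L}_{n,i}$ for the set of distinct centralisers of involutions ending with $s_i$, so that $|\mathcal{L}_{n,i}| = \alpha_i$ directly from the definition of $\alpha_i$ introduced after Lemma \ref{Lemma-newCounting2}. Since every nontrivial cyclically reduced involution (i.e., every element of $\mathcal{A}_n$) ends in some $s_i$ by Corollary \ref{cor3.2}, the set $\mathcal{L}_n$ of all distinct centralisers of involutions in $T_n$ equals $\bigcup_{i=1}^{n-1} \mathcal{L}_{n,i}$, and $\lambda_n = |\mathcal{L}_n|$.

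The whole weight of the argument is then carried by Lemma \ref{Lemmafor-newLamdaAlpha}. Its first assertion tells me that for indices $i < j$ with $(i,j) \neq (n-3, n-1)$, the sets $\mathcal{L}_{n,i}$ and $\mathcal{L}_{n,j}$ are disjoint. Its second assertion tells me that every centraliser in $\mathcal{L}_{n, n-3}$ is also realised as a centraliser of an involution ending with $s_{n-1}$, that is, $\mathcal{L}_{n, n-3} \subseteq \mathcal{L}_{n, n-1}$. Combining these two facts, dropping the index $i = n-3$ from the union loses nothing, and the remaining collection $\{\mathcal{L}_{n,i} : 1 \leq i \leq n-1,\, i \neq n-3\}$ is pairwise disjoint with the same union. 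Summing cardinalities then yields
$$\lambda_n \;=\; \sum_{\substack{1 \leq i \leq n-1 \\ i \neq n-3}} \alpha_i \;=\; \Big(\sum_{i=1}^{n-1} \alpha_i\Big) - \alpha_{n-3},$$
which is the claimed identity.

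I do not anticipate any genuine obstacle: the structural content has already been absorbed into Lemmas \ref{Lemma-newCounting2} and \ref{Lemmafor-newLamdaAlpha}, so the final step is essentially bookkeeping. The only point worth flagging is that the overlap between $\mathcal{L}_{n, n-3}$ and $\mathcal{L}_{n, n-1}$ is a full containment rather than a partial intersection, which is exactly why the inclusion-exclusion collapses to a single subtraction of $\alpha_{n-3}$; had the overlap been only partial, one would need a finer correction term.
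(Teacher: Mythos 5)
Your proposal is correct and follows essentially the same route as the paper: the paper's proof likewise sums the counts $\alpha_i$ over the terminal generator and subtracts $\alpha_{n-3}$, citing the preceding lemma for the fact that the only coincidence among these families of centralisers is the full containment of those ending with $s_{n-3}$ into those ending with $s_{n-1}$. Your write-up merely makes the partition and the inclusion--exclusion bookkeeping explicit.
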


\begin{proof}
From the preceding lemma, we see that
\begin{eqnarray*}
\lambda_n &=& \sum_{i=1}^{n-1} \big (\textit{number of distinct centralisers of involutions ending with}~ s_i \big )\\
& & - \textit{number of distinct centralisers of involutions ending with}~ s_{n-3}\\
 &= & \Big ( \sum_{i=1}^{n-1} \alpha_i \Big ) - \alpha_{n-3},
\end{eqnarray*}
which is desired.
\end{proof}

\begin{lemma}\label{Lemmafor-newLamdaAlpha}
In $T_n$, the centraliser of an involution ending with $s_is_{n-1}$ is not equal to the centraliser of any involution ending with $s_js_{n-1}$ for $1\leq i< j\leq n-3$, unless $i=n-5$ and $j=n-3$. Moreover, the centraliser of an involution ending with $s_{n-5}s_{n-1}$ is equal to the centraliser of some involution ending with $s_{n-3}s_{n-1}$.
\end{lemma}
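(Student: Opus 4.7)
The plan is to adapt the strategy of the previous Lemma \ref{Lemmafor-newLamdaAlpha}: by Lemma \ref{centraliser}, the centraliser of a cyclically reduced involution is generated by a subset of $S$, so two such centralisers coincide if and only if they contain the same standard generators. To distinguish two centralisers, it suffices to exhibit a single $s_k \in S$ lying in one but not the other.

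Write $w_1 = w_1' s_i s_{n-1}$ and $w_2 = w_2' s_j s_{n-1}$ with $1 \le i < j \le n-3$. Since elements of $\mathcal{A}_n$ have strictly increasing indices with gap $\ge 2$, the index set of $w_1$ lies in $\{1, \dots, i\} \cup \{n-1\}$ and that of $w_2$ in $\{1, \dots, j\} \cup \{n-1\}$. First, when $j \le n-4$, I would take $s_{j+1}$ as the distinguishing generator: neither $j$ (which exceeds $i$ and is not $n-1$) nor $j+2$ (both $> i$ and $\le n-2$) appears in the index set of $w_1$, so $s_{j+1} \in \C_{T_n}(w_1)$; on the other hand $s_j$ does appear in $w_2$, so $s_{j+1} \notin \C_{T_n}(w_2)$. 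When $j = n-3$, I would switch to $s_{n-4}$: provided $i \ne n-5$, neither $n-5$ nor $n-3$ lies in the index set of $w_1$, so $s_{n-4} \in \C_{T_n}(w_1)$, while $n-3 = j$ does lie in the index set of $w_2$, forcing $s_{n-4} \notin \C_{T_n}(w_2)$. Only in the excluded case $i = n-5$, $j = n-3$ does this argument collapse, precisely because then $s_{n-5}$ in $w_1$ already blocks $s_{n-4}$ from $\C_{T_n}(w_1)$.

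For the moreover clause, I would produce an explicit match: given any involution $u s_{n-5} s_{n-1}$ ending with $s_{n-5} s_{n-1}$ (so $u$'s indices are $\le n-7$), form $u s_{n-5} s_{n-3} s_{n-1}$, which is still in $\mathcal{A}_n$ and ends with $s_{n-3} s_{n-1}$. Adjoining the index $n-3$ contributes the neighbours $n-4$ (already blocked by $n-5$) and $n-2$ (already blocked by $n-1$) to $\bigcup_t s_{i_t}^*$, so the two centralisers are generated by the same subset of $S$ and therefore coincide.

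The main obstacle, as in the previous lemma, is not a deep one but careful bookkeeping at the boundary: when $j = n-3$ and $i = n-4$ one must verify that $n-5$ does not sneak into the indices of $w_1$ (it cannot, since the index preceding $i = n-4$ is at most $n-6$), and when $j \le n-4$ one must check that $j+2 \ne n-1$ so that $s_{j+1}$ genuinely commutes with the tail $s_{n-1}$. Both checks are routine, but they need to be done explicitly to confirm that $i = n-5$, $j = n-3$ is the unique obstruction.
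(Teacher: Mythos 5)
Your proposal is correct and follows essentially the same route as the paper: distinguish the two centralisers via $s_{j+1}$ when $j\leq n-4$ and via $s_{n-4}$ when $j=n-3$, $i\neq n-5$, and for the second assertion note that inserting $s_{n-3}$ into $ws_{n-5}s_{n-1}$ adds only neighbours already excluded, so the centraliser is unchanged. Your extra boundary bookkeeping (checking $j+2\neq n-1$ and that $n-5$ cannot occur among the indices of $w_1$ when $i=n-4$) is exactly the verification the paper leaves implicit.
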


\begin{proof}
Let $w_1s_is_{n-1}$ and $w_2s_js_{n-1}$ be two involutions ending with $s_is_{n-1}$ and $s_js_{n-1}$, respectively, with $1\leq i< j\leq n-3$. If $j\leq n-4$, then $s_{j+1}\in \C_{T_n}(w_1s_is_{n-1})$, but $s_{j+1}\notin \C_{T_n}(w_2s_js_{n-1})$. If $j = n-3$, then unless $i=n-5$, $s_{n-4}\in  \C_{T_n}(w_1s_is_{n-1})$, but $s_{n-4}\notin \C_{T_n}(w_2s_js_{n-1})$. This proves the first part of the lemma. For the second assertion, if $ws_{n-5}s_{n-1}$ is an involution ending with $s_{n-5}s_{n-1}$, then $\C_{T_n}(ws_{n-5}s_{n-1}) = \C_{T_n}(ws_{n-5}s_{n-3}s_{n-1})$.
\end{proof}

\begin{lemma}\label{LemmaCounting-new1}
For all $i\leq n-3$, the number of distinct centralisers of involutions ending with $s_is_{n-1}$ is equal to the number of distinct centralisers of involutions ending with $s_i$ in $T_n$.
\end{lemma}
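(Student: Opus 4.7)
The plan is to exhibit an explicit bijection between the set of distinct centralisers of involutions ending with $s_i$ and the set of distinct centralisers of involutions ending with $s_is_{n-1}$, built from the obvious word-level map $u \mapsto us_{n-1}$.

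First, I would verify that this word-level map is well defined and bijective between involutions (of the relevant form) in $T_n$. Any cyclically reduced involution ending with $s_i$ has the form $u = s_{j_1}\cdots s_{j_{r-1}} s_i$ with strictly increasing indices satisfying $j_{t+1}-j_t\ge 2$ and $j_{r-1}\le i-2$ (by Corollary \ref{cor3.2}). Since $i\le n-3$, we have $n-1-i\ge 2$, so appending $s_{n-1}$ gives another cyclically reduced involution, now ending with $s_is_{n-1}$; conversely, every involution ending in $s_is_{n-1}$ arises in exactly this way.

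Next, using Lemma \ref{centraliser}, I would compute
\[
\C_{T_n}(u) \;=\; \Big\langle S \setminus \Big(\textstyle\bigcup_{t=1}^{r-1} s_{j_t}^{*} \cup s_i^{*}\Big)\Big\rangle, \qquad
\C_{T_n}(us_{n-1}) \;=\; \Big\langle S \setminus \Big(\textstyle\bigcup_{t=1}^{r-1} s_{j_t}^{*} \cup s_i^{*} \cup \{s_{n-2}\}\Big)\Big\rangle,
\]
where I have used $s_{n-1}^{*}=\{s_{n-2}\}$. Split into cases according to $i$. If $i<n-3$, then every $j_t$ is at most $i<n-3$, so $s_{n-2}\notin \bigcup_t s_{j_t}^{*}\cup s_i^{*}$; hence $\C_{T_n}(us_{n-1})$ is obtained from $\C_{T_n}(u)$ precisely by removing the generator $s_{n-2}$ from its standard generating set. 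If $i=n-3$, then $s_{n-2}\in s_i^{*}$ already, so $\C_{T_n}(u)=\C_{T_n}(us_{n-1})$ without any change.

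From these two descriptions, I would then deduce the key claim: for two involutions $u_1,u_2$ ending with $s_i$,
\[
\C_{T_n}(u_1)=\C_{T_n}(u_2) \;\Longleftrightarrow\; \C_{T_n}(u_1s_{n-1})=\C_{T_n}(u_2s_{n-1}).
\]
For the forward direction this is immediate from the formulas above. For the reverse, if the two centralisers of the extended involutions coincide then, since both are generated by a subset of $S$, their standard generating subsets of $S$ coincide; adding back $s_{n-2}$ (in case $i<n-3$) or doing nothing (in case $i=n-3$) recovers the generating sets of $\C_{T_n}(u_1)$ and $\C_{T_n}(u_2)$, which must therefore agree. Combined with the surjectivity observed at the start, this gives a bijection between distinct centralisers of involutions ending with $s_i$ and those ending with $s_is_{n-1}$, completing the proof. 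No real obstacle is expected here; the only mildly delicate point is keeping track of the boundary case $i=n-3$, where the centraliser is unchanged by appending $s_{n-1}$.
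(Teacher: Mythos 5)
Your proof is correct and follows essentially the same route as the paper: both arguments observe that for $i\le n-4$ the centraliser of $us_{n-1}$ is obtained from that of $u$ by deleting the generator $s_{n-2}$ (which lies in neither $\bigcup_t s_{j_t}^*$ nor $s_i^*$), while for $i=n-3$ the two centralisers coincide outright, and then deduce the equivalence $\C_{T_n}(u_1)=\C_{T_n}(u_2)\Leftrightarrow\C_{T_n}(u_1s_{n-1})=\C_{T_n}(u_2s_{n-1})$. Your version merely spells out the word-level bijection $u\mapsto us_{n-1}$ and the fact that a standard parabolic subgroup determines its generating subset of $S$, details the paper leaves implicit.
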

\begin{proof}
Note that $\C_{T_n}(w{s_{n-3}}) = \C_{T_n}(w{s_{n-3}}s_{n-1})$. But for $i\leq n-4$, we have $s_{n-2} \notin \C_{T_n}(w{s_i}s_{n-1})$ and $\C_{T_n}({ws_i}) = \big\langle \C_{T_n}(ws_is_{n-1}), s_{n-2} \big\rangle$ . Thus, $\C_{T_n}(w_1s_is_{n-1}) = \C_{T_n}(w_2s_is_{n-1})$ if and only if $\C_{T_n}(w_1s_i) = \C_{T_n}(w_2s_i)$.
 \end{proof}

\begin{lemma}\label{lemmafor-newalpha} For $n\geq 5$,
$$ \alpha_{n-1}= 1 + \Big( \sum\limits_{i=1}^{n-3} \alpha_i \Big) -  \alpha_{n-5}.$$
\end{lemma}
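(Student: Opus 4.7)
The plan is to partition the cyclically reduced involutions of $T_n$ ending with $s_{n-1}$ by their second-to-last letter, count distinct centralisers within each class using the preceding lemmas, and correct for the one overlap between classes. Every such involution is either the singleton $s_{n-1}$ or has the form $ws_is_{n-1}$ for some $1 \le i \le n-3$, where the range of $i$ is forced by the gap condition $i_{t+1} - i_t \ge 2$.

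I would first isolate the singleton contribution: by Lemma \ref{centraliser}, $\C_{T_n}(s_{n-1}) = \langle s_1, \dots, s_{n-3}, s_{n-1}\rangle$, which strictly contains the centraliser of any longer involution ending in $s_{n-1}$, since such a word introduces a letter $s_i$ with $i \le n-3$ and therefore removes a neighbour $s_{i\pm 1} \in \{s_1,\dots,s_{n-3}\}$ from the centraliser. In view of Lemma \ref{equal-non-conjugate}, this centraliser is distinct from every centraliser arising in the longer classes, supplying the ``$+1$''. Then, for each $1 \le i \le n-3$, Lemma \ref{LemmaCounting-new1} gives exactly $\alpha_i$ distinct centralisers of involutions ending with $s_is_{n-1}$, so the naive total becomes $1 + \sum_{i=1}^{n-3}\alpha_i$.

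The final step is to correct for double counting. Lemma \ref{Lemmafor-newLamdaAlpha} (in the form comparing endings $s_is_{n-1}$ and $s_js_{n-1}$) tells us that distinct $i < j$ in $\{1,\dots,n-3\}$ give disjoint centraliser sets, except for the pair $(i,j) = (n-5,n-3)$; in this exceptional case every centraliser from the $s_{n-5}s_{n-1}$ class coincides with one from the $s_{n-3}s_{n-1}$ class, via the centraliser-preserving map $ws_{n-5}s_{n-1} \mapsto ws_{n-5}s_{n-3}s_{n-1}$. Combined with the bijective identification from Lemma \ref{LemmaCounting-new1}, this means exactly $\alpha_{n-5}$ centralisers are double-counted (with the convention $\alpha_0 = 0$ when $n = 5$, since no involution ends with $s_0$). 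Subtracting yields
$$\alpha_{n-1} = 1 + \sum_{i=1}^{n-3}\alpha_i - \alpha_{n-5}.$$
The main subtlety is ensuring that the correction is exactly $\alpha_{n-5}$ — no larger (no further cross-class coincidences exist) and no smaller (the $\alpha_{n-5}$ centralisers map to $\alpha_{n-5}$ genuinely distinct centralisers on the $s_{n-3}s_{n-1}$ side) — which is precisely what the two parts of Lemma \ref{Lemmafor-newLamdaAlpha} jointly guarantee.
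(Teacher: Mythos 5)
Your proposal is correct and follows essentially the same route as the paper: split off the singleton $s_{n-1}$, count the remaining involutions ending with $s_is_{n-1}$ classwise via Lemma \ref{LemmaCounting-new1}, and subtract the overlap between the $i=n-5$ and $i=n-3$ classes supplied by Lemma \ref{Lemmafor-newLamdaAlpha}. Your write-up merely spells out the counting (and the $n=5$ edge case) in more detail than the paper's two-line argument.
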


\begin{proof}
The set of centralisers of involutions ending with $s_{n-1}$ in $T_n$ can be divided into two disjoint subsets, namely, $\{ \C_{T_n}(s_{n-1})\}$ and  the set of centralisers of involutions ending with $s_{n-1}$ and of length strictly greater than 1. The proof now follows from lemmas \ref{Lemmafor-newLamdaAlpha} and \ref{LemmaCounting-new1}.
\end{proof}

\bigskip
\textit{Proof of Theorem \ref{thm-zclass}.}
Replacing $n$ by $n+2$ in the preceding result and using Lemma \ref{lemmafor-newlambda}, we get $\alpha_{n+1}= 1+ \lambda_n$ for $n \ge 3$. A repeated use of this identity in Lemma \ref{lemmafor-newlambda} and some simplifications yields $$\lambda_n = \Big( \sum_{i=3}^{n-2} \lambda_i \Big )- \lambda_{n-4} + n-2$$ 
for $n \ge 7$, which is the desired formula. \hfill $\Box$
\bigskip

\section{Automorphisms of Twin groups}\label{Automorphism}
Using the preceding setup, we compute automorphisms of twin groups in full generality. Note that, the automorphism group of $T_3 \cong \mathbb{Z}_2 *\mathbb{Z}_2$ is well-known, and structure of $\Aut(T_n)$ is determined in \cite{James} for $n \geq 4$. However, our approach is elementary and yields an alternate proof for all $n \ge3$. Further, as applications, we show that $PT_n$ is not characteristic in $T_n$ and that $T_n$ is isomorphic to subgroup of $\Aut(PT_n)$ for $n \geq4$. We also determine the group of $\IA$ and normal automorphisms of $T_n$.

\begin{theorem}\label{thm4.1}
Let $T_n$ be the twin group with $n\geq 3$. Then the following hold:
\begin{enumerate}
\item  $\Aut(T_3)\cong T_3\rtimes \mathbb{Z}_2$.
\item  $\Aut(T_4)\cong T_4\rtimes S_3$.
\item $\Aut(T_n)\cong T_n\rtimes D_8\; \mbox{for}\; n \geq 5$, where $D_8$ is the dihedral group of order 8.
\end{enumerate}
\end{theorem}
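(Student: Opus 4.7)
The plan is to compute $\Aut(T_n)$ by first verifying that $\Inn(T_n)\cong T_n$ and then identifying a finite complement through explicit outer automorphisms together with a rigidity argument based on the centralizer data of Section \ref{Involution}. For the first step I would show $\Z(T_n)=1$ for $n\geq 3$: any central element $z$ lies in $\bigcap_{i=1}^{n-1}\C_{T_n}(s_i)$, and successive applications of Lemma \ref{centraliser} make this intersection shrink to the trivial subgroup (for example, $\C_{T_n}(s_1)\cap \C_{T_n}(s_2)=\langle s_4,\dots,s_{n-1}\rangle$ for $n\geq 5$, and continuing strips away every generator). Hence $\Inn(T_n)\cong T_n$ sits as a normal subgroup of $\Aut(T_n)$.

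Next, I would exhibit the explicit outer automorphisms. In every case we have the graph automorphism $\sigma:s_i\mapsto s_{n-i}$ of order $2$. For $n=3$, $T_3\cong \mathbb{Z}_2*\mathbb{Z}_2$ and a direct computation of $\Aut(\mathbb{Z}_2*\mathbb{Z}_2)$ gives $\Aut(T_3)=T_3\rtimes\langle\sigma\rangle$. For $n=4$ I would introduce the transvections $\tau_1:s_1\mapsto s_1s_3$ and $\tau_3:s_3\mapsto s_3s_1$ (each fixing the other generators); a direct check of the defining relations of $T_4$ shows each is an order-$2$ automorphism, and one computes $(\tau_1\tau_3)^3=\id$ with $\tau_1\tau_3\tau_1=\sigma$, giving $\langle\sigma,\tau_1,\tau_3\rangle\cong S_3$. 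For $n\geq 5$ I would introduce the transvections $\tau_a:s_3\mapsto s_3s_1$ and $\tau_b:s_{n-3}\mapsto s_{n-3}s_{n-1}$ (each fixing the remaining generators); these are commuting involutions of $T_n$ with $\sigma\tau_a\sigma=\tau_b$, so that $\sigma\tau_a$ has order $4$ and is inverted by $\sigma$, yielding $\langle\sigma,\tau_a,\tau_b\rangle\cong D_8$.

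The third and main step is the rigidity argument showing that every $\phi\in\Aut(T_n)$ lies in the product of $\Inn(T_n)$ with the finite subgroup above. By Lemma \ref{centraliser} together with Corollary \ref{cor3.2}, the involutions whose centralizer has the maximal rank $n-2$ are precisely the conjugates of $s_1$ and $s_{n-1}$. Hence $\phi(s_1)$ must be conjugate to one of these; after post-composing with $\sigma$ if necessary, and then with an inner automorphism, we may assume $\phi(s_1)=s_1$. This forces $\phi(\C_{T_n}(s_1))=\C_{T_n}(s_1)$, and the requirement that $\phi(s_2)$ not commute with $s_1$ and have the correct centralizer isomorphism type restricts $\phi(s_2)$ to a small finite list. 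Iterating along $s_2,s_3,\dots,s_{n-1}$ and exploiting the reduced-word characterization of Lemma \ref{lem1} pins down $\phi$ on each generator up to the explicit transvections introduced above.

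The hardest part of the argument will be the case analysis in the final step, especially ruling out all a priori possibilities for $\phi(s_i)$ that satisfy the centralizer constraints but do not extend to a global automorphism of $T_n$. Concretely, one must show that any modification $s_i\mapsto s_i\cdot w$ preserving the defining relations of $T_n$ forces $w$ to be an involution commuting with every generator that commutes with $s_i$; enumerating such $w$ yields exactly the transvections $\tau_1,\tau_3$ for $n=4$ and $\tau_a,\tau_b$ for $n\geq 5$, and together with the graph involution $\sigma$ these exhaust $\Out(T_n)$.
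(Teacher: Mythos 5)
Your overall strategy coincides with the paper's: $\Z(T_n)=1$ gives $\Inn(T_n)\cong T_n$, explicit outer automorphisms supply a finite complement, and centraliser ranks of involutions (Lemma \ref{centraliser} with Corollary \ref{cor3.2}) constrain where an arbitrary automorphism can send the generators. Your transvections generate exactly the paper's complements: for $n=4$ the composite $\tau_1\tau_3$ is the paper's order-$3$ automorphism $\tau$, and $\langle\tau_1,\tau_3\rangle\cong S_3$ already contains $\sigma=\tau_1\tau_3\tau_1$; for $n\geq 5$ your $\sigma\tau_a$ is the paper's order-$4$ automorphism $\kappa$. So the difference in the complement is purely presentational.

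Two points in your rigidity step need repair. First, the claim that the involutions whose centraliser has maximal rank $n-2$ are precisely the conjugates of $s_1$ and $s_{n-1}$ fails for $n=4$: there $s_1^*=s_3^*=\{s_2\}$, so $s_1s_3$ also has a centraliser of rank $2=n-2$, and your normalisation ``post-compose with $\sigma$, then with an inner automorphism, to get $\phi(s_1)=s_1$'' must also allow post-composition with $\tau_1$ to handle $\phi(s_1)\in (s_1s_3)^{T_4}$ (this is why the paper isolates $n=4$ in a separate lemma). Second, and more substantially, your final step conflates two different tasks: enumerating the conjugacy classes that can contain $\phi(s_i)$ (which your centraliser argument does, and which is where the transvections appear), and proving that an automorphism satisfying $\phi(s_i)\in s_i^{T_n}$ for every $i$ is inner. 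The latter is the crux of the whole proof (Proposition \ref{inner} in the paper), and the naive iteration ``conjugate each $s_i$ back in turn'' does not obviously close up, because the inner automorphism used to fix $s_i$ may move generators already fixed. The paper handles this by fixing all odd-indexed generators with one inner automorphism, then all even-indexed ones with another, reducing to a single discrepancy word $w$ that conjugates the even-indexed generators while the odd-indexed ones are fixed, and then eliminating the residual case (where $w$ cannot be split by flips into an even-indexed prefix and an odd-indexed suffix) by showing $\phi$ would fail to be surjective. Your sketch needs some version of this argument to be complete; ``exploiting Lemma \ref{lem1}'' as stated does not yet supply it.
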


We prove Theorem \ref{thm4.1} in two parts. First, we show that any automorphism that preserves conjugacy classes of generators is an inner automorphism. It is well-known \cite[Corollary 1]{bardakov} that the center $\Z(T_n) = 1$, and hence $\Inn(T_n)\cong T_n$ for $n>2$. We  then determine all the non-inner automorphisms of $T_n$, and show that $\Out(T_3)\cong \mathbb{Z}_2$, $\Out(T_4)\cong S_3$ and  $\Out(T_n) \cong D_8$ for $n\geq 5$. 

The following result characterises inner automorphisms of $T_n$. 

\begin{proposition}\label{inner}
Let $\phi$ be an automorphism of $T_n$ for $n\geq 3$. Then $\phi$ is inner if and only if $\phi(s_i)\in s_i^{T_n}$ for all $1\leq i\leq n-1$.
\end{proposition}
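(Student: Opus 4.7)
The forward direction is immediate: inner automorphisms preserve every conjugacy class of generators. For the converse I induct on $n$, each step reducing to an automorphism on a smaller parabolic subgroup.

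Base case $n=3$: $T_3 \cong D_\infty$. After composing with an inner automorphism to arrange $\phi(s_1) = s_1$, surjectivity of $\phi$ forces $\langle s_1, \phi(s_2)\rangle = T_3$; a direct check in $D_\infty$ (writing conjugates of $s_2$ as $r^n s_2 r^{-n}$ with $r = s_1 s_2$ and computing that the resulting dihedral subgroup $\langle s_1, r^{2n+1}\rangle$ equals $T_3$ only when $|2n+1|=1$) shows this happens only for $\phi(s_2) \in \{s_2, s_1 s_2 s_1\}$, both of which come from inner automorphisms of $T_3$.

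Inductive step ($n \geq 4$): First compose with an inner to get $\phi(s_{n-1}) = s_{n-1}$. For each $j \leq n-3$, commutation of $s_j$ with $s_{n-1}$ together with Lemma \ref{centraliser} gives $\phi(s_j) \in \C_{T_n}(s_{n-1}) = U \times \langle s_{n-1}\rangle$, where $U := \langle s_1, \ldots, s_{n-3}\rangle \cong T_{n-2}$, and an abelianisation argument (the image of $\phi(s_j)$ in $T_n^{\mathrm{ab}}$ has zero $s_{n-1}$-component) then forces $\phi(s_j) \in U$. Applying the same reasoning to $\phi^{-1}$ makes $\phi$ restrict to an automorphism of $U$; by the parabolic conjugacy theorem each $\phi|_U(s_j) \in s_j^U$, so the inductive hypothesis yields $\phi|_U = \widehat{g'}$ for some $g' \in U$. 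Composing with $\widehat{g'}^{-1}$ (which centralises $s_{n-1}$ since $g' \in U$) produces an automorphism fixing $s_1, \ldots, s_{n-3}$ and $s_{n-1}$.

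Finally apply the same strategy to $V := \langle s_{n-2}, s_{n-1}\rangle \cong T_3$: once $\phi(s_{n-2}) \in V$ is established, $\phi|_V$ is an automorphism of $V$ satisfying the hypothesis, and the base case gives $\phi|_V = \widehat{h}$ with $h \in V$; to preserve $s_{n-1}$, $h$ must lie in $\C_V(s_{n-1}) = \langle s_{n-1}\rangle$, so composing with $\widehat{h}^{-1}$ finishes the reduction to $\phi = \id$. The main obstacle is showing $\phi(s_{n-2}) \in V$: for $n \geq 6$ the commutation-plus-abelianisation template applied to $s_{n-2}$ (which commutes with $s_1, \ldots, s_{n-4}$) suffices, but for $n = 4$ the naive parabolic intersection is all of $T_4$ and one must invoke the free-product decomposition $T_4 \cong \langle s_1, s_3\rangle * \langle s_2\rangle$ together with the classical fact that a generating pair $\{A, w\}$ of a free product $A * B$ forces $w$ to be $A$-conjugate to a generator of $B$; a similar refined argument is needed for $n = 5$. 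Dovetailing these small-$n$ arguments with the uniform inductive framework is where the bulk of the care will lie.
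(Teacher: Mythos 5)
Your route is genuinely different from the paper's. The paper never leaves its elementary word combinatorics: it exploits the bipartition of $S$ into odd- and even-indexed generators (which pairwise commute within each class) to conjugate $\phi$ into a normal form fixing all odd generators and conjugating all even ones by a single reduced word $w$, and then either exhibits $\phi$ as conjugation by a subword of $w$ or shows $\phi$ fails to be surjective by a length argument. You instead run a parabolic induction on $n$, peeling off $s_{n-1}$, then the parabolic $U=\langle s_1,\dots,s_{n-3}\rangle\cong T_{n-2}$, then $V=\langle s_{n-2},s_{n-1}\rangle\cong T_3$. The skeleton is sound: the centraliser computation is exactly Lemma \ref{centraliser}, the abelianisation step correctly eliminates the $s_{n-1}$-factor, and your unproved ``parabolic conjugacy'' claim is in fact easily supplied from the paper's own results --- $\phi(s_j)$ is an involution of $U\cong T_{n-2}$, hence $U$-conjugate to a unique element of $\mathcal{A}_{n-2}\subseteq\mathcal{A}_n$ by Corollary \ref{cor3.2}, and uniqueness in $\mathcal{A}_n$ forces that element to be $s_j$. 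Your $n=3$ base case and the $n\geq 5$ treatment of $s_{n-2}$ (including the extra abelianisation step needed when $n=5$, where the relevant intersection of centralisers is $\langle s_1\rangle\times\langle s_3,s_4\rangle$ rather than $V$ itself) all check out.

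The genuine gap is $n=4$, which you have deferred rather than proved, and which is also a base case your induction cannot avoid (it seeds the even parity). There $s_2$ commutes with no other generator, so the commutation-plus-abelianisation template gives nothing, and everything rests on the claim that if $\langle s_1,s_3, g^{-1}s_2g\rangle=T_4$ then $g^{-1}s_2g$ is $\langle s_1,s_3\rangle$-conjugate to $s_2$. As you state it (``a generating pair $\{A,w\}$ of a free product $A*B$ forces $w$ to be $A$-conjugate to a generator of $B$'') the assertion is not a quotable classical fact and is false without the hypothesis that $w$ is already conjugate into $B$; with that hypothesis it is true, but proving it requires the action on the Bass--Serre tree of $\langle s_1,s_3\rangle * \langle s_2\rangle$ (or a careful normal-form/ping-pong argument showing that if the conjugator $g$ has syllable length $\geq 2$ then $\langle A, g^{-1}s_2g\rangle\cong A*\mathbb{Z}_2$ is proper). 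This is machinery entirely outside the paper's toolkit and is precisely the point where the paper's Step 4 does real work with its non-surjectivity length argument. Until that step is written out, the proof is incomplete at $n=4$; once it is, your argument goes through and arguably generalises more readily to other right-angled Coxeter groups than the paper's parity trick, which is special to the path graph.
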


\begin{proof}
The forward implication is obvious. For the converse, suppose that $\phi(s_i)\in s_i^{T_n}$ for all $1\leq i\leq n-1$. We complete the proof in the following steps:\\

\noindent {\bf Step 1.} {\it There exists some $u\in T_n$ such that  $\widehat{u} \phi(s_{2i-1})= s_{2i-1}$ for all $1\leq i\leq \lfloor n/2 \rfloor$. }\\

 We begin by setting $\phi_1:=\phi$. Without loss of generality, we may assume that $\phi_1(s_1)=s_1$. Let us suppose that $\phi_1(s_3) = w_3^{-1}s_3w_3$, where $w_3$ is a reduced word. We claim that $w_3$ does not contain $s_2$. Let us, on the contrary, suppose that $w_3$ contains $s_2$. Then $s_1$ does not commute with $w_3^{-1}s_3w_3$, but $s_1$ commutes with $s_3$. This is a  contradiction to the fact that automorphisms preserve commuting relations. Thus, our claim is true. Next, we define $\phi_3:= \widehat{w_3}^{-1}\phi_1$. Note that  $\phi_3(s_1) = s_1$ and $\phi_3(s_3) = s_3$. 

Let us now suppose that $\phi_3(s_5) = w_5^{-1}s_5w_5$, where $w_5$ is a reduced word. Suppose that the word $w_5$  contains $s_2$ or $s_4$ or both. Then $s_3$ and $s_5$ commute but their images do not commute under the automorphism $\phi_3$, leading to a contradiction. Hence, $w_5$ contains neither $s_2$ nor $s_4$. Now, we define $\phi_5= \widehat{w_5}^{-1} \phi_3$.  Note that  $\phi_5(s_1) = s_1$, $\phi_5(s_3) = s_3$ and $\phi_5(s_5)=s_5$. 
 
Again, suppose $\phi_5(s_7) = w_7^{-1}s_7w_7$, where $w_7$ is a reduced word. Repeating the argument, we can show that $w_7$ does not contain $s_2$, $s_4$ and $s_6$. Define $\phi_7:= \widehat{w_7}^{-1} \phi_5$. Note that  $\phi_7(s_1) = s_1$, $\phi_7(s_3) = s_3$, $\phi_7(s_5)=s_5$  and $\phi_7(s_7)=s_7$. Continuing this process, we finally get $\phi_{2k-1}(s_{2i-1})=s_{2i-1}$, for all $1\leq i\leq \lfloor n/2 \rfloor$ and $k=\lfloor n/2 \rfloor$. This completes the proof of Step 1.\\

\noindent {\bf Step 2.} {\it There exists some $v\in T_n$ such that  $\widehat{v} \phi(s_{2i})= s_{2i}$ for all $1\leq i\leq \lfloor n-1/2 \rfloor$. }\\

The proof of this step goes along the same lines as that of Step 1.\\

\noindent {\bf Step 3.} {\it Without loss of generality, we can assume that there exists a reduced word $w\in T_n$ such that  $ \phi(s_{2i-1})= s_{2i-1}$ for all $1\leq i\leq \lfloor n/2 \rfloor$ and $\phi(s_{2i})= w^{-1}s_{2i}w$ for all $1\leq i\leq \lfloor n-1/2 \rfloor$. }\\

\noindent This follows immediately from steps $1$ and $2$.\\

\noindent \textbf{Step 4.} {\it If $w$ is the reduced word as in Step 3, then $\phi$ is an inner automorphism induced by some subword of $w$.}\\

We write $w= s_{i_1} s_{i_2}\dots s_{i_k}$. Note that, if $i_1$ is even, then $w^{-1}s_{2i}w$ = $w'^{-1}s_{2i}w'$ for all $1\leq i\leq \lfloor n-1/2 \rfloor$, where $w'= s_{i_2}s_{i_3}\dots s_{i_k}$. On the other hand, if $i_k$ is odd, then $ \widehat{s_{i_k}}^{-1}\phi(s_{2i-1})= s_{2i-1}$ for all $1\leq i\leq \lfloor n/2 \rfloor$ and $\widehat{s_{i_k}}^{-1}\phi(s_{2i})= w''^{-1}s_{2i}w''$ for all $1\leq i\leq \lfloor n-1/2 \rfloor$, where $w''= s_{i_1} s_{i_2}\dots s_{i_k-1}$. It follows that if $s_{i_1}, s_{i_2},\dots, s_{i_k}$ are all even indexed, then $\phi$ is the identity automorphism. Similarly, if $s_{i_1}, s_{i_2},\dots, s_{i_k}$ are all odd indexed, then $\phi$ is the inner automorphism induced by $w$. Further, if by applying finitely many flip transformations, we can write $w=w_1w_2$, where $w_1$ is subword with even indexed generators and $w_2$ a subword  with odd indexed generators,  then $\phi$ is the inner automorphism induced by $w_2$. \par

Now suppose that  $i_1$ is odd, $i_k$ is even and that we cannot bring an even indexed generator to the leftmost position and an odd indexed generator to rightmost position in the expression of $w$ by finitely many flip transformations on $w$. We would derive a contradiction by proving that $\phi$ is not surjective in this case.
\par

We note that $s_{i_k}\neq \phi(s_j)$ for all $j\neq i_k$. Suppose that $s_{i_k}= \phi(s_{i_k})=s_{i_k}s_{i_k-1}\dots s_{i_1}s_{i_k}s_{i_1} s_{i_2}\dots s_{i_k}$. This implies $s_{i_k-1}\dots s_{i_1}s_{i_k}s_{i_1} s_{i_2}\dots s_{i_k} = 1$. Thus, every generator (in particular $s_{i_k}$) appearing in the expression $s_{i_k-1}\dots s_{i_1}s_{i_k}s_{i_1} s_{i_2}\dots s_{i_k}$ should get cancelled by some elementary transformation. But as $w=s_{i_1} s_{i_2}\dots s_{i_k}$ is a reduced word, deletion of $s_{i_k}$ is possible only if we can bring the $s_{i_k}$ to the leftmost position in the expression of $w$ by some flip transformations. But this is not possible, and hence $s_{i_k}\neq \phi(s_{i_k})$.
\par
Now suppose that $s_{i_k}= \phi(x)$ for some reduced word $x=s_{j_1}s_{j_2}\dots s_{j_t}$ of length greater than $1$, i.e., $t> 1$. There are four possibilities on the choice of indices of $s_{j_1}$ and $s_{j_t}$ to be even or odd. Here, we consider one case, i.e. $j_1$ is odd and $j_t$ is even. Rest of the cases follow similarly.
\par
Now, we can write $x=x_1 x_2 \dots x_{2l}$, where $2\leq 2l\leq t$, the odd indexed subwords (i.e. $x_1$, $x_3, \dots, x_{2l-1}$)  contain generators of odd index ($s_1,s_3, \dots,$ etc.) and even indexed subwords (i.e. $x_2$, $x_4, \dots, x_{2l}$)  contain generators of even index ($s_2,s_4, \dots,$ etc.). We have
$$s_{i_k}=\phi(x_1 x_2\dots x_{2l})=x_1 (w^{-1}x_2w) x_3(w^{-1}x_4w)\dots x_{2l-1}(w^{-1}x_{2l}w).$$
\par
Note that no deletion is possible in the expression $x_1 (w^{-1}x_2w) x_3(w^{-1}x_4w)\dots x_{2l-1}(w^{-1}x_{2l}w)$, because of the assumption that $i_1$ is odd, $i_k$ is even and that we cannot bring an even indexed generator to the leftmost position and an odd indexed generator to rightmost position in the expression of $w$ by finitely many flip transformations on $w$. Thus, $$\ell(s_{i_k})=1<2\leq \ell \big(x_1 (w^{-1}x_2w) x_3(w^{-1}x_4w)\dots x_{2l-1}(w^{-1}x_{2l}w)\big),$$ and hence $s_{i_k}\neq \phi(x_1 x_2\dots x_{2l})$ showing that $\phi$ is not surjective. This completes the proof of the proposition.
\end{proof}

\begin{proposition}\label{flip}
The map $\psi:T_n \to T_n$ given by $\psi(s_i)=s_{n-i}$, $1\leq i\leq n-1$, extends to an order 2  non-inner automorphism of $T_n$ for all $n \geq 3$.
\end{proposition}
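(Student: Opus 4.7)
The plan is to verify three things in sequence: that $\psi$ extends to a well-defined homomorphism, that it has order $2$ (hence is an automorphism), and that it is not inner.

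First I would check that $\psi$ respects the defining relations of $T_n$. For any $i$, $\psi(s_i)^2 = s_{n-i}^2 = 1$, and for $|i-j| \geq 2$ we also have $|(n-i)-(n-j)| = |i-j| \geq 2$, so $\psi(s_i)\psi(s_j) = s_{n-i}s_{n-j} = s_{n-j}s_{n-i} = \psi(s_j)\psi(s_i)$. By the universal property of the presentation, $\psi$ extends to a homomorphism $T_n \to T_n$.

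Next, a direct computation gives $\psi^2(s_i) = \psi(s_{n-i}) = s_{n-(n-i)} = s_i$, so $\psi^2 = \id$ on generators and therefore on all of $T_n$. In particular $\psi$ is its own inverse, hence an automorphism, and its order in $\Aut(T_n)$ is exactly $2$ (not $1$, since for $n \geq 3$ we have $s_1 \neq s_{n-1}$, so $\psi \neq \id$).

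Finally, to show $\psi$ is non-inner, I will invoke Proposition \ref{inner}: it suffices to exhibit some generator $s_i$ whose image under $\psi$ is not conjugate to $s_i$. I take $i = 1$, so $\psi(s_1) = s_{n-1}$. For $n \geq 3$, the elements $s_1$ and $s_{n-1}$ are two distinct cyclically reduced involutions of the form considered in Corollary \ref{cor3.2} (each is a single generator, trivially satisfying the gap condition $i_{t+1} - i_t \geq 2$). By the furthermore clause of that corollary, distinct cyclically reduced words of this form represent non-conjugate elements, so $s_1$ and $s_{n-1}$ lie in different conjugacy classes of $T_n$. Hence $\psi$ fails the criterion of Proposition \ref{inner} and is therefore non-inner.

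I do not foresee any serious obstacle: the well-definedness is symmetric in structure, the involutivity is immediate on generators, and the non-innerness is handed to us by the conjugacy classification of single-generator involutions already proved in Corollary \ref{cor3.2}. The one subtlety worth flagging is the edge case $n = 3$, where $\psi$ swaps $s_1$ and $s_2$; here the non-conjugacy also follows from Corollary \ref{cor3.2}, consistent with the well-known fact that the two standard generators of $T_3 \cong \mathbb{Z}_2 * \mathbb{Z}_2$ are not conjugate.
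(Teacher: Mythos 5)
Your proof is correct and fills in exactly the details that the paper omits: its own proof of this proposition is the single line ``The proof follows from the definition of $\psi$.'' Your verification of the defining relations, the computation $\psi^2=\id$, and the non-conjugacy of $s_1$ and $s_{n-1}$ via Corollary \ref{cor3.2} (combined with the easy direction of Proposition \ref{inner}) is precisely the intended argument.
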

\begin{proof}
The proof follows from the definition of $\psi$.
\end{proof}

\begin{proposition}\label{tfourauto}
The following hold in $T_4$:
\begin{itemize}
\item[(i)]The map $\tau : T_4 \to T_4$ given by $\tau(s_1)=s_1s_3$, $\tau(s_2)=s_2$ and $\tau(s_3)=s_1$, extends to an order 3 non-inner automorphism of $T_4$.
\item[(ii)] The subgroup generated by $\tau$ and $\psi $ is isomorphic to $S_3$.
\end{itemize}
\end{proposition}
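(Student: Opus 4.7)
The plan is to handle the two parts of Proposition \ref{tfourauto} in sequence, reducing each to a finite list of mechanical verifications on the three generators of $T_4$.

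For (i), I first need to check that the assignment $\tau(s_1) = s_1s_3,\ \tau(s_2) = s_2,\ \tau(s_3) = s_1$ respects the defining relations of $T_4$, so that it extends to an endomorphism. The involutory relations are immediate: $\tau(s_2)^2 = s_2^2 = 1$, $\tau(s_3)^2 = s_1^2 = 1$, and $\tau(s_1)^2 = (s_1s_3)^2 = s_1 s_3 s_1 s_3 = 1$ using that $s_1$ and $s_3$ commute. The only commuting relation in $T_4$ is $[s_1, s_3] = 1$, and I compute $\tau(s_1)\tau(s_3) = s_1 s_3 s_1 = s_3 = s_1 s_1 s_3 = \tau(s_3)\tau(s_1)$, so this is preserved. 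Next I verify $\tau^3 = \id$ by direct computation: $\tau^2(s_1) = \tau(s_1 s_3) = s_1 s_3 \cdot s_1 = s_3$, and then $\tau^3(s_1) = \tau(s_3) = s_1$; similarly $\tau^3(s_3) = \tau(s_3) \cdot \tau(s_1) \cdot \ldots$ works out to $s_3$, while $\tau$ fixes $s_2$. This simultaneously shows $\tau$ is an automorphism of order dividing $3$, and since $\tau(s_1) \ne s_1$, the order is exactly $3$.

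To show $\tau$ is non-inner I invoke Proposition \ref{inner}: it would suffice to find some generator $s_i$ whose image under $\tau$ is not conjugate to $s_i$. Taking $s_1$, the image $s_1 s_3$ has length two and is cyclically reduced (since $s_1$ and $s_3$ commute and flips cannot merge them), while $s_1$ has length one; by Theorem \ref{conditionforconjugate}, cyclically reduced conjugates have the same length, so $s_1 s_3$ and $s_1$ are not conjugate in $T_4$. Hence $\tau \notin \Inn(T_4)$.

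For (ii), with $\psi$ the flip automorphism from Proposition \ref{flip} (which on $T_4$ sends $s_1 \leftrightarrow s_3$ and fixes $s_2$), the strategy is to verify the standard presentation of $S_3$, namely $\tau^3 = \psi^2 = \id$ and $\psi\tau\psi = \tau^{-1}$, and then to confirm that the generated subgroup is not smaller than $S_3$. The order relations are already in hand, and $\tau^{-1} = \tau^2$ is given explicitly by $s_1 \mapsto s_3,\ s_2 \mapsto s_2,\ s_3 \mapsto s_1 s_3$. I then check the conjugation relation on each generator: $\psi\tau\psi(s_1) = \psi\tau(s_3) = \psi(s_1) = s_3 = \tau^{-1}(s_1)$, $\psi\tau\psi(s_2) = s_2 = \tau^{-1}(s_2)$, and $\psi\tau\psi(s_3) = \psi\tau(s_1) = \psi(s_1 s_3) = s_3 s_1 = s_1 s_3 = \tau^{-1}(s_3)$. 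Thus $\langle \tau, \psi \rangle$ is a homomorphic image of $S_3$; since $\tau$ has order exactly $3$ and $\psi$ is not a power of $\tau$ (it sends $s_1$ to $s_3$, whereas no power of $\tau$ does), the subgroup has order at least $6$, so it is isomorphic to $S_3$.

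The only step I expect to require any care is the verification of $\tau^3 = \id$ on $s_3$, where one has to chase the compositions through $\tau(s_3) = s_1$, $\tau^2(s_3) = s_1 s_3$, $\tau^3(s_3) = s_1 s_3 \cdot s_1 = s_3$; everything else reduces to short word manipulations using $s_1 s_3 = s_3 s_1$ and the appeal to Proposition \ref{inner} via length considerations.
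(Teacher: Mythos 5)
Your proposal is correct and follows the same route as the paper, which simply asserts that $\tau$ is a non-inner automorphism of order $3$ and deduces $\langle\psi,\tau\rangle\cong S_3$ from the relation $\psi\tau\psi=\tau^2$; you have merely written out the routine verifications (relation-preservation, $\tau^3=\id$, non-conjugacy of $s_1s_3$ and $s_1$ via Theorem \ref{conditionforconjugate}, and the order count ruling out a proper quotient of $S_3$) that the paper leaves implicit. All of your computations check out.
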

\begin{proof}
That $\tau$ is a  non-inner  automorphism of order 3 is obvious. Since $\tau$ satisfies the relation $$\psi \tau \psi = {\tau}^2,$$ we have $\langle \psi, \tau \rangle \cong S_3$.
\end{proof}

\begin{proposition}\label{dihedral}
The following hold in $T_n$ for $n\geq 5$:
\begin{itemize}
\item[(i)]The map $\kappa : T_n \to T_n$ given by $\kappa(s_3)=s_{n-3}s_{n-1}$ and $\kappa(s_i)=s_{n-i}$ for $i\neq 3$ extends to an order 4 non-inner  automorphism of $T_n$.
\item[(ii)] The subgroup generated by $\kappa$ and $\psi $ is isomorphic to $D_8$.
\end{itemize}
\end{proposition}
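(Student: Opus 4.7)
\textbf{Proof plan for Proposition \ref{dihedral}.} For part (i), I would first verify that $\kappa$ extends to a well-defined homomorphism of $T_n$ by checking the defining relations of the twin group. The involution relations are immediate for $i \neq 3$, and for $i = 3$ we use that $|(n-3)-(n-1)|=2$, so $s_{n-3}$ and $s_{n-1}$ commute and $\kappa(s_3)^2 = (s_{n-3}s_{n-1})^2 = 1$. For the commuting relations $s_i s_j = s_j s_i$ with $|i-j|\geq 2$, the only nontrivial case is when one of the indices is $3$: here $s_{n-j}$ (for $j \neq 3$, $|j-3| \geq 2$) must commute with $s_{n-3}s_{n-1}$, which requires $|j-3| \geq 2$ and $|j-1| \geq 2$. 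The second inequality fails only when $j = 1$, so I would check this case by hand, obtaining $s_{n-1}(s_{n-3}s_{n-1}) = s_{n-3} = (s_{n-3}s_{n-1})s_{n-1}$, using once again that $s_{n-3}$ and $s_{n-1}$ commute.

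Next, to pin down the order, compute $\kappa^2$ directly. For $n\geq 7$, one finds $\kappa^2$ fixes every $s_i$ with $i \notin \{3, n-3\}$, while $\kappa^2(s_3) = s_1 s_3$ and $\kappa^2(s_{n-3}) = s_{n-3}s_{n-1}$. Squaring again (and using $s_1 s_3 = s_3 s_1$ and $s_{n-3}s_{n-1} = s_{n-1}s_{n-3}$) yields $\kappa^4 = \id$, while $\kappa^2(s_3) \neq s_3$ shows the order is exactly $4$; since $\kappa^4 = \id$, $\kappa$ is automatically bijective with $\kappa^{-1} = \kappa^3$. The cases $n \in \{5, 6\}$, where the index $n-3$ collides with $2$ or with $3$, are handled by direct calculation along the same lines. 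For non-innerness, I would apply Proposition \ref{inner}: since $\kappa(s_1) = s_{n-1}$ while $s_1$ and $s_{n-1}$ lie in different conjugacy classes of $T_n$ (they project to distinct basis elements of the abelianization $(\mathbb{Z}_2)^{n-1}$, as $n-1 \neq 1$), $\kappa$ cannot be inner.

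For part (ii), the strategy is to verify the dihedral presentation $\psi^2 = \kappa^4 = 1$ together with $\psi\kappa\psi = \kappa^{-1}$. The first two relations are already in hand, so only the conjugation relation needs checking. Evaluating on each generator, $\psi\kappa\psi(s_i) = \psi(\kappa(s_{n-i}))$: when $n-i \neq 3$ this simplifies to $\psi(s_i) = s_{n-i}$, and when $n - i = 3$ it gives $\psi(s_{n-3}s_{n-1}) = s_3 s_1 = s_1 s_3$. Comparing with the formulas for $\kappa^3$ derived above (in particular $\kappa^3(s_3) = s_{n-3}$ for $n \neq 6$ and $\kappa^3(s_{n-3}) = s_1 s_3$) verifies the identity. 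Finally, to conclude that $\langle \kappa, \psi\rangle$ has order exactly $8$, I observe that $\psi \notin \langle \kappa \rangle$: the only involution in $\langle \kappa \rangle$ is $\kappa^2$, but $\kappa^2$ fixes $s_1$ whereas $\psi(s_1) = s_{n-1} \neq s_1$. Hence $\langle \kappa, \psi \rangle \cong D_8$.

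The main obstacle is the bookkeeping around the asymmetric status of $s_3$ in the definition of $\kappa$ and the overlapping small-$n$ cases $n = 5, 6$; each individual calculation is routine, but one must be careful not to tacitly assume that $n-3$ and $n-1$ are distinct from $3$ and $1$, and the verifications must be organised so that the argument works uniformly for $n \geq 7$ while the boundary cases are handled separately.
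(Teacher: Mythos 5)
Your proposal is correct and follows the same route as the paper: the paper simply asserts that $\kappa$ is "easy to check" to be a non-inner automorphism of order $4$ and then invokes the single relation $\psi\kappa\psi=\kappa^{-1}$ to conclude $\langle\psi,\kappa\rangle\cong D_8$, which is exactly the verification you carry out (your computations of $\kappa^2$, $\kappa^3$, the relation check, and the non-innerness via Proposition \ref{inner} are all accurate, including the boundary cases $n=5,6$). The only difference is that you supply the details the paper omits.
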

\begin{proof}
It is easy to check that $\kappa$ extends to a non-inner  automorphism of order 4. Since $\kappa$ satisfies the relation $$\psi \kappa \psi = \kappa^{-1},$$ it follows that $\langle \psi, \kappa \rangle \cong D_8$.

\end{proof}

\begin{lemma}\label{lem4.01}
Let $\phi$ be an automorphism of $T_4$. Then $\phi(s_1), \phi(s_3)\in s_1^{T_4}, s_3^{T_4}$ or $(s_1s_3)^{T_4}$ and $\phi(s_2) \in s_2^{T_4}$.
\end{lemma}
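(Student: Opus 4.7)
The plan is to distinguish the conjugacy class $s_2^{T_4}$ from the other three conjugacy classes of involutions in $T_4$ using the size of the centraliser, and then exploit the fact that automorphisms preserve centraliser orders.

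First, I would invoke Theorem \ref{theoremrho} (equivalently, Corollary \ref{cor3.2}) to record that $T_4$ contains exactly four conjugacy classes of involutions, represented by $s_1$, $s_2$, $s_3$, and $s_1 s_3$. Next, using Lemma \ref{centraliser} together with the observations $s_1^* = s_3^* = \{s_2\}$ and $s_2^* = \{s_1, s_3\}$ in $T_4$, I would compute
\[
\C_{T_4}(s_1) = \C_{T_4}(s_3) = \C_{T_4}(s_1 s_3) = \langle s_1, s_3 \rangle \cong \mathbb{Z}_2 \times \mathbb{Z}_2,
\]
which has order $4$, whereas $\C_{T_4}(s_2) = \langle s_2 \rangle \cong \mathbb{Z}_2$, of order $2$.

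Since conjugate elements have conjugate (and hence isomorphic) centralisers, every involution in $s_2^{T_4}$ has a centraliser of order $2$, while every involution in $s_1^{T_4} \cup s_3^{T_4} \cup (s_1 s_3)^{T_4}$ has a centraliser of order $4$. For any automorphism $\phi$ of $T_4$ and any $w \in T_4$, one has $\phi(\C_{T_4}(w)) = \C_{T_4}(\phi(w))$, so the two centralisers are isomorphic. Because orders of elements are preserved, each $\phi(s_i)$ is again an involution and therefore lies in one of the four conjugacy classes above. Matching centraliser orders then forces $\phi(s_2) \in s_2^{T_4}$ (as only this class has centralisers of order $2$) and $\phi(s_1), \phi(s_3) \notin s_2^{T_4}$, so $\phi(s_1), \phi(s_3) \in s_1^{T_4} \cup s_3^{T_4} \cup (s_1 s_3)^{T_4}$, which is the desired statement.

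I do not foresee a substantive obstacle; the argument is bookkeeping once the four conjugacy classes are identified via Corollary \ref{cor3.2}. The only mildly delicate step is the centraliser computation, but this is immediate from Lemma \ref{centraliser} once one records the sets $s_i^*$.
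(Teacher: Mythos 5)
Your proof is correct and follows essentially the same route as the paper: both identify the four conjugacy classes of involutions in $T_4$ via Corollary \ref{cor3.2}, compute their centralisers via Lemma \ref{centraliser}, and use the fact that automorphisms preserve centralisers to separate $s_2^{T_4}$ from the other three classes. The only cosmetic difference is that you distinguish the classes by the \emph{order} of the centraliser ($4$ versus $2$) whereas the paper uses its \emph{rank} ($2$ versus $1$); in $T_4$ these are interchangeable since the centralisers in question are finite elementary abelian $2$-groups.
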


\begin{proof}
It follows from Corollary \ref{centraliser} that $s_1$, $s_3$ and $s_1s_3$ are the only involutions (upto conjugation) with centralisers of  rank two and $s_2$ is the only involution (upto conjugation) with centraliser of  rank one. The result follows since their images under any automorphism should again be involutions with centralisers of the same rank.
\end{proof}

\begin{lemma}\label{lem4.1}
Let $\phi$ be an automorphism of $T_n$ for $n\geq 3$ and $n \neq4$. Then either $\phi(s_1)\in s_1^{T_n}$ and $\phi(s_{n-1})\in s_{n-1}^{T_n}$ or $\psi \phi(s_1)\in s_1^{T_n}$ and $\psi \phi(s_{n-1})\in s_{n-1}^{T_n}$.
\end{lemma}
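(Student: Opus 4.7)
The plan is to distinguish $s_1$ and $s_{n-1}$ among all cyclically reduced involutions by the rank of their centralisers, which is an invariant preserved by any automorphism. By Lemma \ref{centraliser}, a cyclically reduced involution $w = s_{i_1}s_{i_2}\cdots s_{i_k}$ in $T_n$ has centraliser of rank $(n-1) - |\bigcup_{t=1}^{k} s_{i_t}^*|$. Since $|s_1^*| = |s_{n-1}^*| = 1$ while $|s_i^*| = 2$ for $2 \le i \le n-2$, both $s_1$ and $s_{n-1}$ have centralisers of rank $n-2$, which is the largest possible rank among elements of $S$.

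The first real step will be to check that, for $n \neq 4$, no cyclically reduced involution of length $\ge 2$ attains this maximal centraliser rank $n-2$. A direct case analysis using Lemma \ref{centraliser} together with the constraint $i_{t+1} - i_t \ge 2$ shows that any such involution $w$ with $k \ge 2$ satisfies $|\bigcup_t s_{i_t}^*| \ge 2$: the only way to have just one element in this union is if every $s_{i_t}^*$ equals a singleton $\{s_j\}$ sharing the same $s_j$, which (given the spacing constraint) can only happen in $T_4$ where $s_1^* = s_3^* = \{s_2\}$. This is exactly why $n=4$ must be excluded. For $n = 3$ the claim is immediate since the only cyclically reduced involutions are $s_1$ and $s_2$.

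Once this is in place, since $\phi$ sends involutions to involutions and preserves centraliser ranks up to conjugation, both $\phi(s_1)$ and $\phi(s_{n-1})$ must lie in $s_1^{T_n} \cup s_{n-1}^{T_n}$. By Corollary \ref{cor3.2}, the classes $s_1^{T_n}$ and $s_{n-1}^{T_n}$ are distinct, so the injectivity of $\phi$ on generators forces $\{\phi(s_1)^{T_n}, \phi(s_{n-1})^{T_n}\} = \{s_1^{T_n}, s_{n-1}^{T_n}\}$. Either $\phi(s_1) \in s_1^{T_n}$ and $\phi(s_{n-1}) \in s_{n-1}^{T_n}$, giving the first alternative directly, or the two classes are swapped. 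In the latter case, Proposition \ref{flip} gives that $\psi$ is an automorphism of $T_n$ with $\psi(s_1) = s_{n-1}$ and $\psi(s_{n-1}) = s_1$, so $\psi$ carries $s_{n-1}^{T_n}$ to $s_1^{T_n}$ and vice versa, hence $\psi\phi$ satisfies the first alternative, which is exactly the second conclusion of the lemma.

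The one step that needs genuine care is the rank computation for involutions of length $\ge 2$, in particular isolating $T_4$ as the only exception; everything else is a clean application of Lemma \ref{centraliser}, Corollary \ref{cor3.2}, and Proposition \ref{flip}.
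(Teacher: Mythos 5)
Your proposal is correct and follows essentially the same route as the paper: both arguments use Lemma \ref{centraliser} to identify $s_1$ and $s_{n-1}$ as the only cyclically reduced involutions whose centralisers have the maximal rank $n-2$ (with $s_1s_3$ in $T_4$ as the sole exception, explaining the exclusion of $n=4$), and then invoke preservation of this rank under automorphisms together with composition by $\psi$ in the swapped case. You simply spell out the case analysis and the $\psi$-step that the paper leaves implicit.
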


\begin{proof}
It follows from Corollary \ref{centraliser} that $s_1$ and $s_{n-1}$ are the only involutions with centralisers (upto conjugation) of  rank $n-2$ in $T_n$. The result follows since  their images under any automorphism should again be involutions with centralisers of the same rank $n-2$.
\end{proof}

\begin{lemma}\label{lem4.3}
Let $n\geq 5$ and $\phi \in \Aut(T_n)$. Then for all $2\leq i\leq n-2$, $\phi(s_i) \in s_j^{T_n}$ for some $2\leq j\leq n-2$ or $\phi(s_i) \in (s_1s_3)^{T_n}$ or $(s_{n-3}s_{n-1})^{T_n}$.
\end{lemma}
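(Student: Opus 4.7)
The plan is to combine two invariants preserved by any $\phi\in\Aut(T_n)$---the rank of the centraliser, and the abelianisation---first to cut down $\phi(s_i)$ to a short list of candidate conjugacy classes, and then to eliminate the one spurious class.

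First I would observe that, since $\phi$ sends involutions to involutions and preserves the isomorphism type of centralisers, $\phi(s_i)$ is an involution whose centraliser has the same rank as $\C_{T_n}(s_i)$, which by Lemma \ref{centraliser} is $n-3$ when $2\le i\le n-2$. By Corollary \ref{cor3.2}, $\phi(s_i)$ is then conjugate to a cyclically reduced word $s_{j_1}s_{j_2}\cdots s_{j_k}$ with $j_{t+1}-j_t\ge 2$, and Lemma \ref{centraliser} converts the rank condition into $\big|\bigcup_{t=1}^{k} s_{j_t}^*\big|=2$. A short case analysis---using $|s_1^*|=|s_{n-1}^*|=1$ and $|s_j^*|=2$ otherwise, together with the gap condition $j_{t+1}-j_t\ge 2$---shows that the only solutions are: single generators $s_j$ with $2\le j\le n-2$; the adjacent boundary pairs $s_1s_3$ and $s_{n-3}s_{n-1}$ (where the two $s_{j_t}^*$'s share a middle element $s_2$ or $s_{n-2}$); and the disjoint boundary pair $s_1s_{n-1}$ (where the two $s_{j_t}^*$'s are disjoint singletons $\{s_2\}$ and $\{s_{n-2}\}$).

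Next, to exclude $\phi(s_i)\in (s_1s_{n-1})^{T_n}$, I would invoke Lemma \ref{lem4.1} to assume, after replacing $\phi$ by $\psi\phi$ if necessary, that $\phi(s_1)\in s_1^{T_n}$ and $\phi(s_{n-1})\in s_{n-1}^{T_n}$; this replacement is harmless because $\psi$ permutes the set of target conjugacy classes listed in the conclusion (swapping $(s_1s_3)^{T_n}\leftrightarrow(s_{n-3}s_{n-1})^{T_n}$ and sending $s_j^{T_n}\mapsto s_{n-j}^{T_n}$ for $2\le j\le n-2$). Passing to the abelianisation $T_n^{\mathrm{ab}}\cong(\mathbb{Z}/2)^{n-1}$ with basis $e_j=\overline{s_j}$, the induced automorphism $\bar\phi\in\GL_{n-1}(\mathbb{F}_2)$ satisfies $\bar\phi(e_1)=e_1$ and $\bar\phi(e_{n-1})=e_{n-1}$, because conjugation acts trivially on the abelianisation. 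If $\phi(s_i)$ were conjugate to $s_1s_{n-1}$ for some $2\le i\le n-2$, then $\bar\phi(e_i)=e_1+e_{n-1}=\bar\phi(e_1)+\bar\phi(e_{n-1})=\bar\phi(e_1+e_{n-1})$, so injectivity of $\bar\phi$ forces $e_i=e_1+e_{n-1}$, contradicting the linear independence of $\{e_1,e_i,e_{n-1}\}$ for $i\neq 1,n-1$.

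The step I expect to be the main obstacle is the enumeration in the first paragraph: one has to check that no cyclically reduced involution $s_{j_1}\cdots s_{j_k}$ with $k\ge 3$ and $j_{t+1}-j_t\ge 2$ satisfies $\big|\bigcup s_{j_t}^*\big|=2$ for generic $n$. For $n\ge 7$ the gap condition together with $|s_j^*|\ge 1$ quickly forces the union to grow with $k$, so no such word exists; the abelianisation trick used to rule out $(s_1s_{n-1})^{T_n}$ is then entirely robust, and the rest of the argument is a clean bookkeeping exercise.
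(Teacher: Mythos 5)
Your overall strategy is the same as the paper's: use the facts that automorphisms preserve involutions and the rank of centralisers to reduce to the list of cyclically reduced involutions of centraliser rank $n-3$ supplied by Corollary \ref{cor3.2} and Lemma \ref{centraliser}, and then use Lemma \ref{lem4.1} to discard $(s_1s_{n-1})^{T_n}$. Your abelianisation argument for this last exclusion is correct and is a genuine improvement in explicitness: the paper merely asserts that the exclusion ``follows from Lemma \ref{lem4.1}'', whereas your observation that, after normalising by $\psi$ (which indeed permutes the target classes), the induced map on $T_n^{\mathrm{ab}}\cong(\mathbb{Z}/2)^{n-1}$ would have to send $e_i$ to $e_1+e_{n-1}=\bar\phi(e_1+e_{n-1})$, contradicting injectivity, is exactly the missing justification.

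There is, however, a genuine gap in the enumeration step, and you flagged its location yourself without closing it: you only verify for $n\ge 7$ that no word with $k\ge 3$ letters can satisfy $\bigl|\bigcup_{t} s_{j_t}^*\bigr|=2$. For $n=5$ nothing goes wrong, but for $n=6$ the word $s_1s_3s_5$ is a cyclically reduced involution with $s_1^*\cup s_3^*\cup s_5^*=\{s_2,s_4\}$, hence with centraliser $\langle s_1,s_3,s_5\rangle$ of rank $3=n-3$, so it must be added to your candidate list. This case cannot be eliminated by further cleverness: the automorphism $\kappa$ of Proposition \ref{dihedral} satisfies $\kappa^2(s_3)=\kappa(s_3s_5)=s_1s_3s_5$ in $T_6$, and by Corollary \ref{cor3.2} the element $s_1s_3s_5$ lies in none of the classes $s_j^{T_6}$, $(s_1s_3)^{T_6}$, $(s_3s_5)^{T_6}$. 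So the statement itself fails for $n=6$ and must be amended to allow $(s_1s_3s_5)^{T_6}$ as an additional target. You should know that the paper's own proof has exactly the same defect (its claim that only $s_2,\dots,s_{n-2}$, $s_1s_{n-1}$, $s_1s_3$ and $s_{n-3}s_{n-1}$ have centralisers of rank $n-3$ is false for $n=6$); the eventual description of $\Aut(T_6)$ survives because $\kappa^2$ is already contained in $\langle\psi,\kappa\rangle$, but both in your write-up and in the paper the extra conjugacy class needs to be tracked explicitly through this lemma and the arguments that depend on it when $n=6$.
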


\begin{proof}
Fix an $i$ such that $2\leq i \leq n-2$. We observe that $s_i$ is an involution and its centraliser has rank $n-3$. From Corollary \ref{centraliser}, it is clear that only $s_2, s_3,\dots, s_{n-2}$ and $s_1s_{n-1}$, $s_1s_3$ and $s_{n-3}s_{n-1}$  are cyclically reduced involutions whose centralisers have rank $n-3$. Further, from Lemma \ref{lem4.1}, it follows that $\phi(s_i)\notin (s_1s_{n-1})^{T_n}$ for $2\leq i \leq n-2$.
\end{proof}

\begin{lemma}\label{lem4.4}
Let $n\geq 5$ and $\phi \in \Aut(T_n)$ be an automorphism such that $\phi(s_1)\in s_1^{T_n}$. Then the following hold:
\begin{enumerate}
\item[(i)] $\phi(s_i)\in s_i^{T_n}$ for all $2\leq i\leq n-2$ and $3\neq i \neq n-3$.
\item[(ii)]  $\phi(s_3)\in s_3^{T_n}$ or $(s_1s_3)^{T_n}$.
\item[(iii)] $\phi(s_{n-3})\in s_{n-3}^{T_n}$ or $(s_{n-3}s_{n-1})^{T_n}$.
\end{enumerate}
\end{lemma}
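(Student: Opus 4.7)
The plan is to reduce to the case $\phi(s_1)=s_1$ and then identify each $\phi(s_i)$ by matching its commutation pattern against the surviving candidates from Lemma~\ref{lem4.3}.

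Writing $\phi(s_1)=g^{-1}s_1 g$ for some $g\in T_n$ and composing $\phi$ with $\widehat{g^{-1}}$ preserves every conjugacy class $\phi(s_i)^{T_n}$, so I may assume $\phi(s_1)=s_1$. Lemma~\ref{lem4.1} then yields $\phi(s_{n-1})\in s_{n-1}^{T_n}$, and because $s_1$ commutes with $s_i$ for all $i\ge 3$, applying $\phi$ gives $\phi(s_i)\in\C_{T_n}(s_1)=\langle s_1,s_3,s_4,\dots,s_{n-1}\rangle$ for $i\ge 3$, whereas $\phi(s_2)\notin\C_{T_n}(s_1)$.

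Next, for each conjugacy class $c^{T_n}$ in the candidate list provided by Lemma~\ref{lem4.3}, I would compute via Lemma~\ref{centraliser} the set $N(c)$ of generators of $T_n$ not commuting with a cyclically reduced representative of $c$: namely $N(s_j)=\{s_{j-1},s_{j+1}\}$ for $2\le j\le n-2$, $N(s_1 s_3)=\{s_2,s_4\}$, and $N(s_{n-3}s_{n-1})=\{s_{n-4},s_{n-2}\}$. Starting with $\phi(s_2)$ and working outward, I would match these profiles against the known path of non-commuting pairs among $\phi(s_1),\dots,\phi(s_{n-1})$ inherited from $s_1,\dots,s_{n-1}$, to single out the conjugacy class of each $\phi(s_i)$. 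For $i\neq 3,n-3$ this picks out $s_i^{T_n}$ uniquely. At $i=3$ and $i=n-3$, however, the pairs $\{s_3,s_1 s_3\}$ and $\{s_{n-3},s_{n-3}s_{n-1}\}$ yield identical centralizer subgroups by Lemma~\ref{centraliser} (both equal $\langle s_1,s_3,s_5,\dots,s_{n-1}\rangle$ and $\langle s_1,\dots,s_{n-4},s_{n-3},s_{n-1}\rangle$ respectively), so no commutation argument can separate them, which accounts precisely for the two alternatives in (ii) and (iii).

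The main obstacle is making the commutation-profile argument rigorous: knowing only that $\phi(s_i)$ lies in a given conjugacy class $c^{T_n}$ does not immediately determine which specific generators $s_k$ commute with $\phi(s_i)$, since this can depend on the conjugator. To handle this I would fix a cyclically reduced conjugate of $\phi(s_i)$ via Corollary~\ref{cor3.2} and Proposition~\ref{prop-inv}, writing it as $s_{i_1}\cdots s_{i_k}$ with $i_{t+1}-i_t\ge 2$, and then translate commutation with each $s_k$ into a numerical condition on the positions $i_t$. Combined with the constraint $\phi(s_i)\in\C_{T_n}(s_1)$ for $i\ge 3$ and the fact that $\{\phi(s_i)\}_{i=1}^{n-1}$ must generate $T_n$, this bookkeeping should pin down both the allowed positions $i_t$ and the conjugator sufficiently to force the assignment claimed in (i)--(iii).
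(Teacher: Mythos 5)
Your overall strategy --- feed the candidate list from Lemma \ref{lem4.3} into the fact that automorphisms preserve commutation --- is the same as the paper's, and your observation that the ambiguities in (ii) and (iii) come from the coincidences $\C_{T_n}(s_3)=\C_{T_n}(s_1s_3)$ and $\C_{T_n}(s_{n-3})=\C_{T_n}(s_{n-3}s_{n-1})$ is exactly the right explanation for why those cases cannot be separated. But the step that actually eliminates candidates is missing, and you have correctly located where: commutation against a conjugacy class is not detected by the profile $N(c)$ of the cyclically reduced representative $c$. Concretely, even after normalizing $\phi(s_1)=s_1$, the fact that $\phi(s_1)$ and $\phi(s_2)$ do not commute does \emph{not} rule out $\phi(s_2)\in s_3^{T_n}$: the element $s_2s_3s_2$ is conjugate to $s_3$ yet fails to commute with $s_1$ (it is a reduced word containing $s_2$, hence lies outside the standard parabolic $\C_{T_n}(s_1)=\langle s_j : j\neq 2\rangle$), even though $s_3$ itself commutes with $s_1$. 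So matching $N(c)$ against the path of non-commuting pairs proves nothing until the conjugator is controlled, and your proposed bookkeeping with cyclically reduced forms and the remark that the $\phi(s_i)$ generate $T_n$ is never actually deployed to control it.

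The paper closes exactly this gap with a normalization trick your sketch would need to reproduce. To pin down $\phi(s_2)$, say $\phi(s_2)\in g^{T_n}$ with $g$ a candidate from Lemma \ref{lem4.3}, one composes with an inner automorphism $\widehat{w}$ so that $\widehat{w}\phi(s_2)=g$ \emph{on the nose}, and then, using surjectivity together with $\phi(s_1)\in s_1^{T_n}$, picks a reduced word $w'$ with $\widehat{w}\phi(w'^{-1}s_1w')=s_1$. Now the obstruction is applied on the source side at the level of the whole conjugacy class: every element of $\C_{T_n}(s_2)$ has $\eta_1$ even, while every conjugate of $s_1$ has $\eta_1$ odd, so $w'^{-1}s_1w'$ cannot commute with $s_2$; applying $\widehat{w}\phi$ forces $g\notin\C_{T_n}(s_1)$, which kills every candidate except $s_2$ (and $s_{n-3}s_{n-1}$ when $n=5$, which is then absorbed into case (iii)). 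Iterating this along the Coxeter diagram, one generator at a time, yields (i)--(iii), with the residual options at $i=3$ and $i=n-3$ surviving precisely because of the centraliser coincidences you noted. Without this ``pin both images down exactly, then use a parity obstruction valid for the entire conjugacy class'' step, your argument as written does not eliminate a single candidate.
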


\begin{proof}
Suppose $\phi (s_2)\in g^{T_n}$ for some $g\in \{s_2,s_3,\ldots, s_{n-2}, s_1s_3, s_{n-3}s_{n-1}\}$. Choosing an appropriate inner automorphism say $\hat{w}$ and a reduced word $w'$, we get $\hat{w}(\phi (s_2))=g$ and $\hat{w}\big(\phi (w'^{-1}s_1w')\big)=s_1$. We note that $w'^{-1}s_1w'$ and $s_2$ do not commute. Since automorphisms preserve commuting relations, $s_1$ and $g$ also should not commute, and hence $g=s_2$. The proof can now be completed by repeating the argument.
\end{proof}

\indent {\bf Proof of Theorem \ref{thm4.1}}
Recall from propositions \ref{tfourauto} and \ref{dihedral} that $\langle \psi, \tau \rangle\cong S_3 $ and $\langle \psi, \kappa \rangle\cong D_8$. We observe that $\Inn(T_3) \cap \langle \psi \rangle$, $\Inn(T_4)\cap \langle \psi, \tau \rangle$ and $\Inn(T_n)\cap \langle \psi, \kappa \rangle$, $n \ge 5$, are all trivial. Thus, $\Inn(T_3) \rtimes \langle \psi \rangle \le \Aut(T_3)$, $\Inn(T_4) \rtimes \langle \psi, \tau \rangle \le \Aut(T_4)$ and $\Inn(T_n) \rtimes\langle \psi, \kappa \rangle \le \Aut(T_n)$ for $n\ge 5$.  It now remains to prove the reverse inclusions. Let $\phi$ be an automorphism of $T_n$. It follows from Proposition \ref{inner} and lemmas  \ref{lem4.01}, \ref{lem4.1}, \ref{lem4.3}, \ref{lem4.4}  that
\begin{enumerate}
\item[(a)] $\psi^{t}\phi \in \Inn(T_3)$ for some $0\leq t\leq 1$,
\item[(b)] $\psi^{m_1}\tau^{m_2}\phi \in \Inn(T_4)$ for some $0\leq m_1\leq 1$ and $0\leq m_2\leq 2$,
\item[(c)] $\psi^{n_1}\kappa^{n_2}\phi \in \Inn(T_n)$ for some $0\leq n_1\leq 1$ and $0\leq n_2\leq 3$, where $n \ge 5$.
\end{enumerate}
This completes the proof of the theorem. \hfill $\Box$
\par

\begin{corollary}\label{T_n}
The following hold in $T_n$:
\begin{enumerate}
\item[(i)] $\Out(T_3) \cong  \mathbb{Z}_2 \cong \langle \psi \rangle.$
\item[(ii)] $\Out(T_4)\cong S_3 \cong \langle\psi, \tau \rangle $.
\item[(iii)] $\Out(T_n)\cong D_8 \cong  \langle \psi, \kappa \rangle $ for $n\ge 5$.
\end{enumerate}
\end{corollary}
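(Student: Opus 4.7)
The plan is to read this off directly from Theorem \ref{thm4.1} together with the centreless property of $T_n$. Recall that $\Z(T_n)=1$ for $n\ge 3$ (as cited in the proof of Theorem \ref{thm4.1}), so the canonical surjection $T_n \to \Inn(T_n)$, $g \mapsto \widehat{g}$, is an isomorphism. Hence for each $n \ge 3$,
\[
\Out(T_n) \;=\; \Aut(T_n)/\Inn(T_n) \;\cong\; \Aut(T_n)/T_n.
\]

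For each of the three cases I would invoke the semidirect product decompositions given by Theorem \ref{thm4.1}. In case (i), $\Aut(T_3)\cong T_3 \rtimes \langle \psi\rangle$, so quotienting by the normal subgroup $\Inn(T_3)\cong T_3$ leaves the complement $\langle \psi\rangle\cong \mathbb{Z}_2$, giving $\Out(T_3)\cong \mathbb{Z}_2$. In case (ii), $\Aut(T_4)\cong T_4 \rtimes \langle \psi, \tau\rangle$, with $\langle \psi,\tau\rangle\cong S_3$ by Proposition \ref{tfourauto}(ii); hence $\Out(T_4)\cong S_3$. In case (iii), for $n\ge 5$, $\Aut(T_n)\cong T_n \rtimes \langle \psi, \kappa\rangle$, with $\langle \psi,\kappa\rangle\cong D_8$ by Proposition \ref{dihedral}(ii); hence $\Out(T_n)\cong D_8$.

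To conclude that the images of the listed complements are really isomorphic to $\Out(T_n)$, I would just note that in each semidirect decomposition $\Aut(T_n) = \Inn(T_n) \rtimes H$ (with $H = \langle \psi\rangle$, $\langle \psi,\tau\rangle$, or $\langle \psi,\kappa\rangle$ respectively), the complement $H$ meets $\Inn(T_n)$ trivially (this intersection triviality was explicitly verified in the proof of Theorem \ref{thm4.1}), so the composition $H \hookrightarrow \Aut(T_n) \twoheadrightarrow \Out(T_n)$ is an isomorphism. There is essentially no obstacle here; the content lies entirely in Theorem \ref{thm4.1} and the explicit structure of $\langle \psi,\tau\rangle$ and $\langle \psi,\kappa\rangle$ already established in Propositions \ref{tfourauto} and \ref{dihedral}. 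The corollary is therefore a direct reading-off of those results.
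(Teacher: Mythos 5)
Your argument is correct and is exactly how the paper intends the corollary to be read: the paper gives no separate proof, treating it as immediate from the semidirect product decompositions of Theorem \ref{thm4.1} (together with $\Z(T_n)=1$, so $\Inn(T_n)\cong T_n$, and the trivial intersections $\Inn(T_n)\cap H=1$ verified there). Nothing is missing.
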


A consequence of our preceding analysis is the following result.

\begin{proposition}\label{not-characteristic}
$PT_n$ is characteristic in $T_n$ if and only if $n= 2,3$.
\end{proposition}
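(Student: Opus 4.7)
My plan is to reduce the problem to checking the representatives of $\Out(T_n)$ found in the preceding analysis, and then characterize when such an outer automorphism $\phi$ preserves $PT_n$ by the condition that $\pi \circ \phi : T_n \to S_n$ factors through $\pi$, equivalently, that there is a (unique) endomorphism $\overline{\phi}$ of $S_n$ making the natural square commute. Inner automorphisms automatically preserve $PT_n$ since $PT_n$ is normal, so it suffices to analyze $\psi$, $\tau$, and $\kappa$.

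First I would observe that $\psi$ always preserves $PT_n$. Indeed, define $c \in S_n$ by $c(i)=n+1-i$; then conjugation by $c$ sends the transposition $(i,i+1)$ to $(n-i,n-i+1)$. Hence the map $\overline{\psi}(\sigma) := c\sigma c^{-1}$ is an automorphism of $S_n$ satisfying $\overline{\psi}\circ \pi = \pi \circ \psi$ on generators, and therefore everywhere. This together with the structure of $\Aut(T_n)$ handles the cases $n=2,3$: for $n=2$ we have $PT_2 = 1$, and for $n=3$ we have $\Out(T_3)=\langle \psi\rangle$, so $PT_3$ is characteristic.

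For $n\geq 4$ I would exhibit an outer automorphism that does \emph{not} descend to $S_n$. The key observation is that if $\phi$ preserves $PT_n$, then the induced $\overline{\phi}$ must send conjugate elements to conjugate elements; in particular, since all $\pi(s_i)$ are conjugate transpositions in $S_n$, the elements $\pi(\phi(s_i))$ for $1\leq i \leq n-1$ must all lie in a single conjugacy class of $S_n$. For $\tau \in \Aut(T_4)$ we have $\pi(\tau(s_1)) = \pi(s_1s_3) = (1,2)(3,4)$ of cycle type $(2,2)$ while $\pi(\tau(s_3)) = \pi(s_1) = (1,2)$ is a transposition, giving an immediate contradiction. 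For $\kappa \in \Aut(T_n)$ with $n\geq 5$, the analogous witness is $\pi(\kappa(s_3)) = \pi(s_{n-3}s_{n-1}) = (n-3,n-2)(n-1,n)$ of cycle type $(2,2)$ versus $\pi(\kappa(s_1)) = (n-1,n)$, again of different cycle type. Since $\Out(T_4) = \langle \psi,\tau\rangle$ and $\Out(T_n) = \langle \psi,\kappa\rangle$ for $n\geq 5$, combining this with Theorem \ref{thm4.1} finishes the non-characteristic direction.

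I do not anticipate a serious obstacle: the ingredients are purely that $\psi$ acts by a conjugation-type symmetry on $S_n$ (hence descends), whereas $\tau$ and $\kappa$ manifestly break the cycle-type structure. The only mild care needed is the reduction step, namely justifying that testing on the coset representatives of $\Inn(T_n)$ in $\Aut(T_n)$ is enough, which is immediate from the normality of $PT_n$. If one prefers to avoid the descent language altogether, the same conclusion for $n\geq 4$ can be reached by direct computation with $(s_1s_2)^3\in PT_4$ (for $\tau$) and $(s_2s_3)^3\in PT_n$ (for $\kappa$), verifying that their images lie outside $PT_n$; I would include this as a concrete check to corroborate the abstract argument.
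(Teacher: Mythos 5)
Your proof is correct, and while it follows the same overall skeleton as the paper's (reduce to the coset representatives $\psi$, $\tau$, $\kappa$ of $\Inn(T_n)$ in $\Aut(T_n)$, show $\psi$ preserves $PT_n$, show $\tau$ and $\kappa$ do not), the verification mechanisms are genuinely different at both steps. For $\psi$, the paper checks invariance on the normal generating set $\{((s_is_{i+1})^3)^g\}$ of $PT_n$, which requires citing \cite[Theorem 4]{bardakov}; your descent argument via the commuting square $\pi\circ\psi=\overline{\psi}\circ\pi$, with $\overline{\psi}$ conjugation by $i\mapsto n+1-i$, avoids that input entirely and is self-contained. For the non-characteristic direction, the paper exhibits a single concrete element ($(s_1s_2)^3$ for $\tau$, $(s_2s_3)^3$ for $\kappa$) whose image has nontrivial $\pi$-image, whereas you use the conceptual obstruction that an automorphism preserving $PT_n$ induces an automorphism of $S_n\cong T_n/PT_n$, which must send the mutually conjugate transpositions $\pi(s_i)$ to elements of a single cycle type --- contradicted by $\pi(\tau(s_1))=(1,2)(3,4)$ versus $\pi(\tau(s_3))=(1,2)$, and similarly for $\kappa$. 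Your argument is slightly more robust (it shows \emph{any} automorphism scattering the generators across conjugacy classes of differing cycle types in $S_n$ fails, and it needs no care about the exceptional outer automorphism of $S_6$ since you only use that $\overline{\phi}$ preserves the partition into conjugacy classes), at the cost of a small amount of quotient-group bookkeeping: one should note that the induced surjective endomorphism of the finite group $S_n$ is automatically an automorphism, which you implicitly use. The concrete fallback you mention at the end is precisely the paper's proof.
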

\begin{proof}
$PT_2$ being trivial is obviously characteristic in $T_2$. We observe that $PT_n$ is invariant under $\psi$. This follows since  the set $\big\{((s_is_{i+1})^3)^g~|~1 \le i \le n-2, ~g \in T_n\big\}$ generates $PT_n$ (\cite[Theorem 4]{bardakov}) and $\psi\big(((s_is_{i+1})^3)^g\big)= \big((s_{n-i}s_{n-i-1})^3\big)^{\psi(g)} \in PT_n$. This together with Theorem \ref{thm4.1}(1) implies that $PT_3$ is characteristic in $T_3$.
\par
For the reverse implication, first consider the element $(s_1s_2)^3 \in PT_4$. Then $\tau \big ( (s_1s_2)^3\big ) = (s_1s_3s_2)^3 \notin PT_4$ (since $\pi \big((s_1s_3s_2)^3\big) \neq 1$), and hence $PT_4$ is not invariant under $\tau$. Similarly, $\kappa \big((s_2s_3)^3\big)=(s_{n-2}s_{n-3}s_{n-1})^3 \notin PT_n$ for $n \ge 5$, and we are done.
\end{proof}

Since $PT_n$ is normal in $T_n$, there is a natural homomorphism 
$$\phi_n : T_n\cong \Inn(T_n)\rightarrow \Aut(PT_n),$$
obtained by restricting the inner automorphisms. It is proved in \cite{bardakov} that $\Ker(\phi_3)\neq 1$ and $\phi_4$ is injective. We show that this is the case for all $n \geq 4$. 

\begin{proposition}\label{EmbeddingTheorem} 
The homomorphism $\phi_n : T_n\rightarrow \Aut(PT_n)$ is injective if and only if $n\geq 4$.
\end{proposition}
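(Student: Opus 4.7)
The plan is to first reinterpret $\Ker(\phi_n)$ as the centraliser $\C_{T_n}(PT_n)$ of $PT_n$ inside $T_n$: since $\phi_n(g)$ is conjugation by $g^{-1}$ on $PT_n$, the kernel consists of exactly those $g$ that commute with every element of $PT_n$. Under this identification the two small cases become transparent. For $n=2$ we have $PT_2 = 1$, so $\Ker(\phi_2) = T_2 \cong \mathbb{Z}_2$. For $n=3$, the non-trivial element $s_1 s_2 \in T_3$ centralises the generator $(s_1 s_2)^3$ of the cyclic group $PT_3$, giving a non-trivial kernel.

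For the hard direction $n \geq 4$, I would take $g \in \Ker(\phi_n) = \C_{T_n}(PT_n)$ and aim to show $g = 1$ by testing $g$ against the family $x_i := (s_i s_{i+1})^3 \in PT_n$ for $i = 1, \ldots, n-2$ (these lie in $PT_n$ since the image of $s_i s_{i+1}$ in $S_n$ is a $3$-cycle). Since $g$ centralises all of $PT_n$, it must lie in each $\C_{T_n}(x_i)$. The central computation will be the centraliser formula
\[
\C_{T_n}(x_i) \;=\; \langle s_i s_{i+1} \rangle \,\times\, \langle s_j : j \leq i-2 \text{ or } j \geq i+3 \rangle.
\]
The inclusion $\supseteq$ is immediate from the defining relations: $\langle s_i s_{i+1} \rangle$ centralises $(s_i s_{i+1})^3$ already inside the dihedral subgroup $\langle s_i, s_{i+1}\rangle \cong D_\infty$, and every $s_j$ in the second factor commutes with both $s_i$ and $s_{i+1}$. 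For the reverse inclusion I would invoke the standard centraliser theorem for right-angled Coxeter groups: for a cyclically reduced $w$ with support $J$, one has $\C_W(w) = \C_{W_J}(w) \cdot W_{J^\perp}$, and in our situation $\C_{\langle s_i, s_{i+1}\rangle}((s_i s_{i+1})^3) = \langle s_i s_{i+1}\rangle$ is the translation subgroup of $D_\infty$, while $\{s_i, s_{i+1}\}^\perp$ is precisely the second factor above.

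With the centraliser formula in hand, $g$ sits inside the parabolic subgroup $W_{J_i}$ where $J_i := \{s_i, s_{i+1}\} \cup \{s_j : j \leq i-2 \text{ or } j \geq i+3\}$. Since parabolic subgroups of Coxeter groups are closed under taking reduced expressions, neither $s_{i-1}$ nor $s_{i+2}$ can occur in any reduced word for $g$. Letting $i$ range over $\{1, \ldots, n-2\}$, the union of the forbidden generators is $\{s_1, \ldots, s_{n-3}\} \cup \{s_3, \ldots, s_{n-1}\}$, and for $n \geq 5$ this exhausts $\{s_1, \ldots, s_{n-1}\}$, forcing $g = 1$ outright. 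The boundary case $n = 4$ is a brief separate finish: only $s_1$ and $s_3$ are excluded, so $g \in \langle s_2\rangle$, while simultaneously $g \in \C_{T_4}(x_1) = \langle s_1 s_2\rangle$; a short length argument in $\langle s_1, s_2\rangle \cong D_\infty$ then yields $\langle s_1 s_2\rangle \cap \langle s_2\rangle = \{1\}$ and hence $g = 1$.

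The principal obstacle in this plan is establishing the centraliser formula for $(s_i s_{i+1})^3$. It is a standard consequence of right-angled Coxeter group theory, but giving a self-contained proof within the paper's framework would require combining the elementary transformations of Section~\ref{basic} with a careful word argument showing that occurrences of $s_{i-1}$ or $s_{i+2}$ in any reduced expression of a would-be centralising element cannot be made to disappear under the commutation $gx_i = x_i g$; alternatively one can cite the relevant structural result directly.
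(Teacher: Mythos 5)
Your proposal is correct and follows essentially the same route as the paper: identify $\Ker(\phi_n)$ with $\C_{T_n}(PT_n)$, compute $\C_{T_n}\big((s_is_{i+1})^3\big)=\big\langle s_1,\dots,s_{i-2},\,s_is_{i+1},\,s_{i+3},\dots,s_{n-1}\big\rangle$, and observe that the intersection over $i=1,\dots,n-2$ is trivial. You supply more detail than the paper does (which dismisses the centraliser formula and the triviality of the intersection as ``easy to check'') and you handle the $n=2,3$ direction directly rather than citing \cite{bardakov}, but the underlying argument is the same.
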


\begin{proof}
Note that $\Ker(\phi_n)= \C_{T_n}(PT_n)$. It is easy to check that 
$$\C_{T_n}\big((s_is_{i+1})^3\big)=\big\langle s_1,s_2,\dots, s_{i-2},s_is_{i+1},s_{i+3}, s_{i+4}, \dots, s_{n-1} \big\rangle,$$
and  $$\C_{T_n}(PT_n)\leq \bigcap_{i=1}^{n-2} \C_{T_n}\big((s_is_{i+1})^3\big)=1.$$
\end{proof}

An $\IA$ automorphism of a group is an automorphism that acts as identity on the abelianization of the group. Note that inner automorphisms are $\IA$ automorphisms. It is easy to check that non-inner automorphisms of $T_n$ for $n\geq 3$ are not $\IA$ automorphisms. Therefore, we have the following result.

\begin{proposition}
Every  $\IA$ automorphism of $T_n$ is inner for $n \geq 3$.
\end{proposition}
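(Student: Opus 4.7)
The plan is to reduce the statement to Proposition 5.2, which says that an automorphism $\phi$ of $T_n$ is inner precisely when $\phi(s_i)\in s_i^{T_n}$ for every generator $s_i$. So my task is to show: if $\phi$ is IA, then $\phi(s_i)$ is conjugate to $s_i$ for each $i$. The key tool on top of Proposition 5.2 will be the classification of involutions up to conjugacy provided by Corollary 3.4.

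First I would record that the abelianization $T_n^{\mathrm{ab}}$ is the elementary abelian $2$-group $(\mathbb{Z}_2)^{n-1}$, freely generated by the images $\bar{s}_1,\ldots,\bar{s}_{n-1}$ of the standard generators. This is immediate from the presentation, since the defining relations all hold in $(\mathbb{Z}_2)^{n-1}$, and the surjection $T_n\to(\mathbb{Z}_2)^{n-1}$ sending $s_i$ to the $i$-th basis vector exhibits the $\bar{s}_i$ as linearly independent. Now fix an IA automorphism $\phi$ and an index $i$. By definition of IA, $\phi(s_i)$ and $s_i$ have the same image $\bar{s}_i$ in $T_n^{\mathrm{ab}}$. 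Moreover, $\phi(s_i)^2=\phi(s_i^2)=1$, so $\phi(s_i)$ is an involution in $T_n$.

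By Corollary 3.4, $\phi(s_i)$ is conjugate to a cyclically reduced word of the form $s_{i_1}s_{i_2}\cdots s_{i_k}$ with $i_{t+1}-i_t\ge 2$; in particular the indices $i_1,\ldots,i_k$ are pairwise distinct. Since conjugation preserves the image in the abelianization, we obtain
\[
\bar{s}_{i_1}+\bar{s}_{i_2}+\cdots+\bar{s}_{i_k}=\bar{s}_i
\]
in $(\mathbb{Z}_2)^{n-1}$. Because the $\bar{s}_{i_t}$ are distinct basis elements of a free $\mathbb{Z}_2$-module, this equality forces $k=1$ and $i_1=i$. Hence $\phi(s_i)\in s_i^{T_n}$ for every $i$, and Proposition 5.2 yields that $\phi$ is inner.

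I do not foresee any substantive obstacle: the proof assembles directly from Corollary 3.4, the elementary description of $T_n^{\mathrm{ab}}$, and Proposition 5.2. The only point requiring care is the claim that the $\bar{s}_i$ form a basis of $T_n^{\mathrm{ab}}$, which is handled by the explicit surjection above. Everything else is a one-line consequence of the fact that in a free $\mathbb{Z}_2$-module, a sum of \emph{distinct} basis vectors equals a single basis vector only if the sum has a unique term.
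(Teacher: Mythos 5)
Your proof is correct, but it takes a genuinely different route from the paper's. The paper disposes of the statement in one line by invoking its classification of $\Aut(T_n)$ (Theorem \ref{thm4.1}): every automorphism is an inner one composed with one of the finitely many outer representatives ($\psi$, $\tau$, $\kappa$ and their products), and each nontrivial representative visibly acts nontrivially on $T_n^{\mathrm{ab}}\cong(\mathbb{Z}_2)^{n-1}$, so no non-inner automorphism is $\IA$. You instead bypass the classification entirely: an $\IA$ automorphism sends each $s_i$ to an involution whose abelianized image is $\bar s_i$; by Corollary \ref{cor3.2} that involution is conjugate to a product of generators with pairwise distinct (indeed, spaced-out) indices; linear independence of the $\bar s_j$ in $(\mathbb{Z}_2)^{n-1}$ then forces that product to be $s_i$ itself, and Proposition \ref{inner} concludes. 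Both arguments ultimately rest on Proposition \ref{inner}, but yours needs only that plus the conjugacy classification of involutions, avoids the case split $n=3$, $n=4$, $n\ge 5$, and is therefore more self-contained (it would survive in settings where one knows the involutions up to conjugacy but not yet the full automorphism group). All the steps check out: the indices $i_1<i_2<\cdots<i_k$ with $i_{t+1}-i_t\ge 2$ are indeed distinct, conjugation acts trivially on the abelianization, and the $\bar s_j$ do form an $\mathbb{F}_2$-basis by the explicit surjection you describe. The only blemish is cosmetic: the results you cite as ``Proposition 5.2'' and ``Corollary 3.4'' are Proposition \ref{inner} and Corollary \ref{cor3.2} in the paper's numbering.
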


An automorphism of a group is said to be \textit{normal} if it maps every normal subgroup onto itself. The following is an analogue of a similar result of Neshchadim for braid groups \cite{Neshchadim}.

\begin{proposition}
Every normal automorphism of $T_n$ is inner for $n \geq 3$.
\end{proposition}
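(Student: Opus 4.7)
The plan is to reduce the statement to the preceding proposition, which asserts that every $\IA$ automorphism of $T_n$ is inner. It therefore suffices to show that any normal automorphism $\phi \in \Aut(T_n)$ acts trivially on the abelianization $T_n^{\mathrm{ab}} \cong (\mathbb{Z}/2)^{n-1}$, where the classes $\bar{s}_1,\dots,\bar{s}_{n-1}$ form an $\mathbb{F}_2$-basis.

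First I would look at the normal closure $N_j := \langle\langle s_j \rangle\rangle$ for each $1 \le j \le n-1$. Since $N_j$ is normal in $T_n$ and $\phi$ is a normal automorphism, $\phi(N_j) = N_j$; in particular $\phi(s_j) \in N_j$. Passing to $T_n^{\mathrm{ab}}$, the image of $N_j$ collapses to the cyclic subgroup $\{0, \bar{s}_j\}$, because every conjugate of $s_j$ becomes equal to $\bar{s}_j$ after abelianizing. Hence $\overline{\phi(s_j)} \in \{0, \bar{s}_j\}$ for every $j$.

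Next, the induced endomorphism $\bar{\phi}$ of the finite group $T_n^{\mathrm{ab}}$ is surjective (because $\phi$ is), and hence is an automorphism. In particular, its matrix with respect to the basis $\bar{s}_1,\dots,\bar{s}_{n-1}$ is invertible over $\mathbb{F}_2$. The $j$-th column of this matrix is $\overline{\phi(s_j)}$, which by the previous paragraph lies in $\{0, e_j\}$. None of the columns can be zero (otherwise the matrix is singular), so each column must equal $e_j$. Therefore $\bar{\phi} = \id$, which means $\phi \in \IA(T_n)$, and the preceding proposition then gives that $\phi$ is inner.

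I do not expect a serious obstacle in carrying this out: the only non-routine input is the elementary observation that the image of the normal closure of a generator in the abelianization is the cyclic subgroup it generates. Everything else reduces to invertibility of an $(n-1)\times(n-1)$ matrix over $\mathbb{F}_2$ whose columns are restricted to a two-element set, and an appeal to the preceding proposition.
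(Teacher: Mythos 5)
Your proof is correct, and it takes a genuinely different route from the paper's. The paper argues via the explicit classification of $\Out(T_n)$ from Theorem \ref{thm4.1}: it lists the non-inner coset representatives ($\psi$; $\tau,\tau^2$ and their products with $\psi$ for $n=4$; $\kappa^j$ and $\kappa^j\psi$ for $n\ge 5$) and exhibits, for each one, a concrete normal subgroup it fails to preserve --- the normal closure of $s_1$ for $\psi$, and $PT_n$ for all the others, the case $\kappa^2$ requiring a separate small computation. You instead observe that a normal automorphism $\phi$ preserves each normal closure $\langle\langle s_j\rangle\rangle$, whose image in $T_n^{\mathrm{ab}}\cong(\mathbb{Z}/2)^{n-1}$ is just $\{0,\bar s_j\}$, so the induced automorphism of the abelianization has a matrix over $\mathbb{F}_2$ whose $j$-th column lies in $\{0,e_j\}$; invertibility forces it to be the identity, hence $\phi$ is an $\IA$ automorphism and the preceding proposition applies. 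Your reduction is shorter, avoids all case analysis, and would generalize to any group in which the generators' images form a basis of the abelianization and $\IA$ automorphisms are known to be inner. What it does not avoid is the dependence on Theorem \ref{thm4.1}: the paper's $\IA$ proposition is itself justified by inspecting the non-inner automorphisms from that classification, so both arguments ultimately rest on the same structural result; the paper's version has the mild advantage of producing explicit normal subgroups witnessing the failure of normality for each outer class.
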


\begin{proof}
Note that every inner automorphism is a normal automorphism. Thus, in view of Theorem \ref{thm4.1}, it suffices to prove that no automorphism in the sets $\{\psi\}$, $\{ \psi , \tau , \tau^2 , \tau\psi , \tau^2\psi \}$ and $\{\psi, \kappa,  \kappa^2, \kappa^3, \kappa\psi,  \kappa^2\psi, \kappa^3\psi \}$ is normal for $n=3$, $n=4$ and $n \ge 5$, respectively. 

We first prove that $\psi$ is not a normal automorphism of $T_n$ for all $n\geq 3$. Take $N$ to be the normal closure of $s_1$ in $T_n$. Note that for each element $g\in N$ and each generator $s_i$, $i\geq 2$, number of $s_i$'s present in the expression of $g$ is even. This implies that $s_1\in N$ whereas  $\psi(s_1)=s_{n-1} \not\in N$, and hence $\psi$ is not normal.
\par
It follows from the proof of Proposition \ref{not-characteristic} that $PT_4$ is not invariant under $\tau$, and so under its inverse $\tau^2$. Hence, both $\tau$ and $\tau^2$ are not normal. Similarly, by Proposition \ref{not-characteristic}, it follows that $\kappa$ and its inverse  $\kappa^3$ are not normal. Further,  $\kappa^2$ is not normal, since $(s_2s_3)^3 \in PT_n$ whereas $\kappa^2 \big((s_2s_3)^3\big)=(s_2s_4s_3s_1)^3$ for $n=5$, $\kappa^2 \big((s_2s_3)^3\big)=(s_2s_3s_5s_1)^3$ for $n=6$ and $\kappa^2 \big((s_2s_3)^3\big)=(s_2s_3s_1)^3$ for $n \ge 7$. In each of these cases, $\kappa^2 \big((s_2s_3)^3\big) \not\in PT_n$.
\par
For the remaining cases, we recall that $PT_n$ is invariant under $\psi$. Consequently, if  $PT_4$ is invariant under $\tau^i\psi$, then it is so under $\tau^i$, a contradiction. Similarly, if $PT_n$, $n \ge 5$, is invariant under $\kappa^j\psi$, then it is so under $\kappa^j$, again a contradiction. Thus, the only normal automorphisms of $T_n$ are the inner automorphisms.
\end{proof}
\bigskip
 
 \section{Representations of twin groups by automorphisms}\label{representations}
 It follows from Proposition \ref{EmbeddingTheorem} that for $n=4, 5, 6$ we have faithful representations $$T_4 \hookrightarrow \Aut(F_7),$$
$$T_5 \hookrightarrow \Aut(F_{31}),$$
and 
$$T_6 \hookrightarrow \Aut \big(F_{71} * ( *_{20}(\mathbb{Z} \oplus \mathbb{Z})) \big).$$
It is a natural question whether there exists a (faithful) representation of $T_n$ into $\Aut(F_n)$ analogous to the classical Artin representation of the braid group. We conclude with the following result. 

 \begin{theorem}\label{faithful-representation}
 The map $\mu_n: T_n \to \Aut(F_n)$ defined by the action of generators of $T_n$ by
 \[ \mu_n(s_i) :
 \begin{cases}
 x_i \mapsto x_ix_{i+1},\\
 x_{i+1} \mapsto x_{i+1}^{-1},\\
 x_j \mapsto x_j, \quad j \neq i, i+1,\\
 \end{cases}
 \]
 is a representation of $T_n$. Moreover, $\mu_n$ is faithful if and only if $n=2,3$.
 \end{theorem}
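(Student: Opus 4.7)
Verification that $\mu_n$ is a representation reduces to checking the defining relations of $T_n$. For $\mu_n(s_i)^2 = \id_{F_n}$, a direct substitution on generators suffices: $\mu_n(s_i)^2(x_i) = \mu_n(s_i)(x_ix_{i+1}) = (x_ix_{i+1})\cdot x_{i+1}^{-1} = x_i$, $\mu_n(s_i)^2(x_{i+1}) = (x_{i+1}^{-1})^{-1} = x_{i+1}$, and all other $x_j$ are fixed. For the commuting relation $|i-j|\ge 2$, the pairs $\{x_i,x_{i+1}\}$ and $\{x_j,x_{j+1}\}$ are disjoint, so $\mu_n(s_i)$ and $\mu_n(s_j)$ act non-trivially on disjoint free factors and automatically commute in $\Aut(F_n)$. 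Each $\mu_n(s_i)$ is an automorphism because it is its own inverse, so $\mu_n$ extends to a well-defined homomorphism $T_n\to\Aut(F_n)$.

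For faithfulness at $n=2$, the unique nontrivial element of $T_2\cong\mathbb{Z}/2$ has $\mu_2(s_1)(x_2)=x_2^{-1}\ne x_2$. For $n=3$, the plan is to use the structure $T_3\cong\mathbb{Z}/2*\mathbb{Z}/2\cong D_\infty$, so that every nontrivial element has the form $s_1$, $s_2$, $(s_1s_2)^k$, or $(s_1s_2)^k s_1$ with $k\neq 0$, and to pass to the induced map on the abelianization $\bar\mu_3:T_3\to\GL_3(\mathbb{Z})$. The matrix of $\bar\mu_3(s_1s_2)$ is lower triangular with diagonal $(1,-1,-1)$; a short induction on $k$ shows that $\bar\mu_3(s_1s_2)^{2k}$ has $(3,1)$-entry equal to $k$, whence it has infinite order. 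Since $\det\bar\mu_3(s_i)=-1$, the two cosets $\langle s_1s_2\rangle$ and $\langle s_1s_2\rangle s_1$ in $T_3$ are separated by the determinant, so no nontrivial element can lie in $\Ker\bar\mu_3\supseteq\Ker\mu_3$.

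For $n\ge 4$, the goal is to produce an explicit nontrivial $g\in T_n$ with $\mu_n(g)=\id_{F_n}$. A useful structural remark narrows the search: each $\mu_n(s_i)$ preserves the descending subgroup chain $H_k=\langle x_k,\ldots,x_n\rangle$, so $\mu_n(T_n)$ sits inside the ``filtration-preserving'' subgroup of $\Aut(F_n)$, and the composition $\bar\mu_n\big|_{PT_n}$ lands in the \emph{unipotent} lower-triangular matrices in $\GL_n(\mathbb{Z})$ (because every generator of $PT_n$ has each $s_j$ appearing an even number of times, killing the diagonal signs). The unipotent matrix group is nilpotent, yet $PT_4\cong F_7$ is not, forcing $\Ker(\bar\mu_n\big|_{PT_n})\ne 1$ for $n\geq 4$. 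I would then search inside this kernel, among short commutators of the seven free generators of $PT_4$, for a word whose iterated action on each $x_j$ collapses fully; such an element already refutes faithfulness of $\mu_4$, and the standard inclusion $T_4\hookrightarrow T_n$ extends the conclusion to all $n\ge 5$.

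The main obstacle is this final step: the solvable (in fact nilpotent on $PT_n$) structure of $\bar\mu_n$ guarantees that $\bar\mu_n$ is not injective for $n\ge 4$, but upgrading a kernel element of $\bar\mu_n$ to a kernel element of $\mu_n$ requires ensuring that the iterated free-group substitutions cancel exactly, not merely up to the commutator subgroup of $F_n$. Hence the construction is inherently a careful, word-level verification rather than a purely structural argument.
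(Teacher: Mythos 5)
Your verification that $\mu_n$ is a representation and your treatment of $n=2,3$ are correct; for $n=3$ the passage to the abelianized representation $\bar\mu_3:T_3\to\GL_3(\mathbb{Z})$ (determinant separating reflections from rotations, and the $(3,1)$-entry of $\bar\mu_3\big((s_1s_2)^{2k}\big)$ detecting $k$) is a clean alternative to the paper's direct case-by-case computation of $\mu_3$ on the elements $(s_1s_2)^m$, $(s_1s_2)^m s_1$ and $s_2(s_1s_2)^m$, and it proves the same thing.

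The case $n\ge 4$, however, has a genuine gap, and you have correctly diagnosed it yourself: your structural argument only shows that the composite $T_n\to\Aut(F_n)\to\GL_n(\mathbb{Z})$ fails to be injective, which says nothing about $\Ker(\mu_n)$ itself --- an element could act trivially on $F_n^{\mathrm{ab}}$ while acting nontrivially on $F_n$. The theorem's ``only if'' direction therefore remains unproved until you actually exhibit a nontrivial element of $\Ker(\mu_n)$, and your proposal stops at ``I would then search.'' The paper closes exactly this gap by writing down the explicit word $x=(s_2s_3)^{-2}s_1(s_2s_3)^{2}s_1(s_2s_3)^{2}s_1(s_2s_3)^{-2}s_1$, checking $\pi(x)\neq 1$ (so $x\neq 1$), and verifying by a word-level computation of $\mu_n\big((s_2s_3)^{\pm 2}s_1(s_2s_3)^{\mp 2}\big)$ that $\mu_n(x)=\mathrm{id}$; this also handles all $n\ge 5$ at once, as in your remark about the inclusion $T_4\hookrightarrow T_n$. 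A secondary error: your parenthetical claim that every generator of $PT_n$ has each $s_j$ appearing an even number of times is false --- $(s_1s_2)^3$ lies in $PT_3$ with $\eta_1=\eta_2=3$, and the standard generators $\big((s_is_{i+1})^3\big)^g$ of $PT_n$ likewise have odd exponent sums --- so $\bar\mu_n(PT_n)$ is not unipotent. The structural conclusion survives because $\bar\mu_n(T_n)$ is contained in the (solvable) lower-triangular subgroup of $\GL_n(\mathbb{Z})$ while $PT_4\cong F_7$ is not solvable, but this repaired statement still only concerns $\bar\mu_n$, not $\mu_n$.
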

 
 \begin{proof}
 We begin by proving that $\mu_n$ is a representation. Clearly, $s_i^2$ act as identity automorphism of $F_n$. Moreover, the action of $ \mu_n(s_i)\mu_n(s_j)$ and $\mu_n(s_j)\mu_n(s_i)$ on generators $x_1, \dots , x_n$ is 
 \begin{equation*}
 \mu_n(s_i)\mu_n(s_j) :
 \begin{cases}
 x_i \mapsto x_ix_{i+1},\\
 x_{i+1} \mapsto x_{i+1}^{-1},\\
 x_j \mapsto x_jx_{j+1},\\
 x_{j+1} \mapsto x_{j+1}^{-1},\\
 \end{cases}
 \quad
 \mu_n(s_j)\mu_n(s_i):
 \begin{cases}
 x_i \mapsto x_ix_{i+1},\\
 x_{i+1} \mapsto x_{i+1}^{-1},\\
 x_j \mapsto x_jx_{j+1},\\
 x_{j+1} \mapsto x_{j+1}^{-1},\\
 \end{cases}
 \end{equation*}
 for all $| i - j | \geq 2.$
 \par
 
Faithfulness for $n=2$ is obvious. For the case $n=3$, we know that an arbitrary element of $T_3$ is of the form $(s_1s_2)^m$ or ${(s_1s_2)^m}s_1$ or $s_2(s_1s_2)^m$ for some integer $m$. If $\Ker(\mu_3) \neq 1$, then there exists a non-trivial element $s \in T_3$ such that $\mu_3(s)(x_i)= x_i$ for $i=1,2,3$. We show that no such element exists. We first consider elements of the form $(s_1s_2)^m$. It is easy to see that the action of all odd powers of $s_1s_2$ gives a non-identity automorphism of $F_3$, since it sends $x_3$ to $x_3^{-1}$. On the other hand, for even powers of $s_1s_2$, we have 
\[\mu_3 \big((s_1s_2)^{2k}\big):
\begin{cases}
x_1 \mapsto x_1x_3^k,\\
x_2 \mapsto x_3^{-k}x_2x_3^{-k},\\
x_3 \mapsto x_3,
\end{cases}
\] 
for all integer $k$.  Next we consider ${(s_1s_2)^m}s_1$. Again, if $m$ is odd, the action is non-trivial and if $m=2k$, then we have 
\[\mu_3 \big({(s_1s_2)^{2k}}s_1\big):
\begin{cases}
x_1 \mapsto x_1x_2x_3^{-k},\\
x_2 \mapsto x_3^{k}x_2^{-1}x_3^{k},\\
x_3 \mapsto x_3.
\end{cases}
\] 
Similarly, for $s_2(s_1s_2)^m$, we have a non-trivial action if $m$ is even. If $m=2k-1$, then we have 
\[\mu_3 \big({s_2(s_1s_2)^{2k-1}}\big):
\begin{cases}
x_1 \mapsto x_1x_2x_3^{k},\\
x_2 \mapsto x_3^{-k}x_2^{-1}x_3^{-k},\\
x_3 \mapsto x_3.
\end{cases}
\] 
Thus,  $\mu_3 : T_3 \to \Aut(F_3)$ is faithful.
\par
Finally, we show that  $\mu_n$ is not faithful for $n \geq 4$. Consider the element $$x=(s_2s_3)^{-2} s_1 (s_2s_3)^2 s_1 (s_2s_3)^{2} s_1 (s_2s_3)^{-2} s_1 \in T_n,~ n \geq 4.$$ Since $\pi(x) \neq 1$, it follows that $x
\neq 1$. It is easy to check that 
\begin{equation*}\label{action1}
\mu_n \big({(s_2s_3)^{2}}\big):
\begin{cases}
x_1 \mapsto x_1,\\
x_2 \mapsto x_2x_4,\\
x_3 \mapsto x_4^{-1}x_3x_4^{-1},\\
x_j \mapsto x_j~,~j \geq 4,
\end{cases}
\end{equation*}
\begin{equation*}\label{action2}
\mu_n \big({(s_2s_3)^{-2}}\big):
\begin{cases}
x_1 \mapsto x_1,\\
x_2 \mapsto x_2x_4^{-1},\\
x_3 \mapsto x_4x_3x_4,\\
x_j \mapsto x_j~,~j \geq 4,
\end{cases}
\end{equation*}

\begin{equation}\label{action3}
\mu_n \big({(s_2s_3)^{2}s_1(s_2s_3)^{-2}}\big):
\begin{cases}
x_1 \mapsto x_1x_2x_4,\\
x_2 \mapsto x_4^{-1}x_2^{-1}x_4^{-1},\\
x_j \mapsto x_j~,~j \geq 3,
\end{cases}
\end{equation}
and
\begin{equation}\label{action4}
\mu_n \big({(s_2s_3)^{-2}s_1(s_2s_3)^{2}}\big):
\begin{cases}
x_1 \mapsto x_1x_2x_4^{-1},\\
x_2 \mapsto x_4x_2^{-1}x_4,\\
x_j \mapsto x_j~,~j \geq 3.
\end{cases}
\end{equation}
Using \ref{action3}, \ref{action4} and action of $s_1$, we conclude that $x \in \Ker (\mu_n)$.
\end{proof}

\noindent\textbf{Acknowledgments.}
The authors are grateful to Valeriy Bardakov for his interest in this work and for his many useful comments. Tushar Kanta Naik and Neha Nanda thank IISER Mohali for the Post Doctoral and the PhD Research Fellowships, respectively. Mahender Singh is supported by the Swarna Jayanti Fellowship grants DST/SJF/MSA-02/2018-19 and SB/SJF/2019-20/04.

\end{document}